\documentclass[a4paper,leqno,12pt, reqno]{amsart}

\usepackage{amsmath}
\usepackage{amssymb}
\usepackage{graphicx}
\usepackage{tikz-cd}
\usepackage{hyperref}
\usepackage[all]{xy}

\usepackage{xspace}
\usepackage{bm}
\usepackage{amsmath}
\usepackage{amstext}
\usepackage{amsfonts}
\usepackage[mathscr]{euscript}
\usepackage{amscd}
\usepackage{latexsym}
\usepackage{amssymb}
\usepackage{enumerate}
\usepackage{xcolor}
\usepackage{mathtools}

\theoremstyle{plain}
\swapnumbers
    \newtheorem{theorem}{Theorem}[section]
    \newtheorem{proposition}[theorem]{Proposition}
    \newtheorem{lemma}[theorem]{Lemma}

    \newtheorem{corollary}[theorem]{Corollary}
    
    \newtheorem{subsec}[theorem]{}

\theoremstyle{definition}
    \newtheorem{definition}[theorem]{Definition}
    \newtheorem{example}[theorem]{Example}

\theoremstyle{remark}
        \newtheorem{remark}[theorem]{Remark}

    \newtheorem{ack}[theorem]{Acknowledgements}

\newcommand{\sect}{\setcounter{figure}{0}\section}

\renewcommand{\thefigure}{\arabic{section}.\arabic{figure}}

\newenvironment{myeq}[1][]
{\stepcounter{figure}\begin{equation}\tag{\thefigure}{#1}}
{\end{equation}}

\newcommand{\Aut}{\operatorname{Aut}}
\newcommand{\conn}{\operatorname{conn}}

\newcommand{\End}{\operatorname{End}}

\newcommand{\Hom}{\operatorname{Hom}}
\newcommand{\Id}{\operatorname{Id}}

\newcommand{\cp}{\operatorname{cup}}

\newcommand{\A}{\mathcal{A}_{\#}}
\renewcommand{\AA}{\mathcal{A}_{\ast}}
\newcommand{\AAA}{\mathcal{A}^{\ast}}
\newcommand{\NA}{N\A}
\newcommand{\NAA}{N\AA}
\newcommand{\NAAA}{N\AAA}
\newcommand{\R}{\mathbb{R}P}
\newcommand{\C}{\mathbb{C}P}
\renewcommand{\H}{\mathbb{H}P}

\newcommand{\E}{\mathbb{E}P}
\newcommand{\ZZ}{{\mathbb Z}}

\begin{document}

\title{Reducibility of self-maps in monoid and its related invariants }
\author{Gopal Chandra Dutta}

\address{Stat-Math Unit, Indian Statistical Institute, Kolkata 700108, India}
\email{gdutta670@gmail.com}

\date{\today}

\subjclass[2020]{Primary 55P10, 55Q05 ; \ Secondary: 55P05, 55P30, 55S45.}
\keywords{self-homotopy equivalence, self-closeness number, reducibility, nilpotency, atomic space, cohomology ring, wedge sum}

\begin{abstract}
Given a positive integer $k$, we investigate the $k$-redcibility of self-maps in the monoid $\AA^k(X\vee Y)$, consisting of self-maps that induce isomorphisms on homology groups up to degree $k$. In general, verifying $k$-reducibility is a subtle problem. We show that the $k$-reducibility of a self-map is determine through its induced endomorphisms on homology or cohomology groups. Moreover, under the k-reducibility assumption, the computation of the homology self-closeness number of the wedge sum of spaces essentially reduces to the computation of the homology self-closeness numbers of the individual wedge summands. We generalize the notion of an atomic space to that of an $n$-atomic space and establish some of its fundamental properties.  We show that the $k$-reducibility criteria for self-maps in a monoid $\AA^k(X)$ is satisfied when the space $X$ decomposes as a wedge sum of distinct $n$-atomic spaces. Finally, we determine the homology self-closeness numbers of wedge sums of distinct $n$-atomic spaces.
\end{abstract}
\maketitle

\maketitle

\sect{Introduction}
For a pointed space $X$, the group of homotopy classes of self-equivalences of $X$ is denoted by $\Aut(X)$. This group has been studied by several authors over the last few decades \cite{AGSE, RGSHD}. In general, however, determining the structure of $\Aut(X)$ is a difficult problem. In \cite{CLCNGS}, Choi and Lee introduced a monoid $\A^k(X)$ closely related to the group $\Aut(X)$, defined by $$\A^k(X): = \big\{f\in [X,X]: ~f_{\#}\colon \pi_i(X)\xrightarrow{ \cong } \pi_i(X), \text{ for all } 0\leq i\leq k \big\}.$$ Later, this notion was extended to homological and cohomological settings by Oda and Yamaguchi in \cite{OYSCFC}. They defined $$\AA^k(X): = \big\{f\in [X,X]: ~f_{\ast}\colon H_i(X)\xrightarrow{ \cong } H_i(X), \text{ for all } 0\leq i\leq k \big\}.$$ Similarly, in cohomology, $\AAA_k(X): = \big\{f\in [X,X]: ~f^{\ast}\colon H^i(X)\xrightarrow{ \cong } H^i(X), 0\leq i\leq k \big\}$. 

Let $f\colon X\times Y\to X\times Y$ be a self-map. It admits a decomposition into four component maps $f_{IJ}\colon J\to I$, defined as $f_{IJ}:= p'_I\circ f\circ \iota'_J$, where $\iota'_J\colon J\to X\times Y$ and $p'_I\colon X\times Y\to I$ are coordinate inclusion and projection maps, respectively for $I,J\in \{X,Y\}$. A map $f\in \Aut(X\times Y)$ is said to be \emph{reducible} if $f_{XX}\in \Aut(X)$ and $f_{YY}\in \Aut(Y)$. The structure of $\Aut(X\times Y)$ under this reducibility assumption has been studied in \cite{PPSHE,PPRSE}. An analogous notion of reducibility exists for wedge sums, and the group $\Aut(X\vee Y)$ has been investigated under similar hypotheses in \cite{HWSW}. Moreover, for a fix positive integer $k$, a map $f\in \A^k(X\times Y)$ is called \emph{$k$-reducible} if $f_{XX}\in \A^k(X)$ and $f_{YY}\in \A^k(Y)$. Under this $k$-reducibility assumption, the monoid $\A^k(X\times Y)$ decomposes as the product of the individual monoids $\A^k(X)$ and $\A^k(Y)$, as shown in \cite{DSMSE, JLFSMP}. Similarly, a self-map $f\in \AA^k(X\times Y)$ is said to be a $k$-reducible if $f_{XX}\in \AA^k(X)$ and $f_{YY}\in \AA^k(Y)$. Nevertheless, reducibility is a rather restrictive condition and does not hold in general. In this paper, we establish several criteria ensuring $k$-reducibility of self-maps in $\AA^k(X\vee Y)$, mostly generalizing the results of Pave\v si\'c in \cite{PPRSE}. The notion of $k$-reducibility plays a crucial role in understanding the structure of the monoid $\AA^k(X\vee Y)$ and, consequently, the group $\Aut(X\vee Y)$.

For a connected CW-complex $X$, Choi and Lee introduced a numeric homotopy invariant called \emph{(homotopy) self-closeness number}, defined by $$\NA(X): = \min \big\{k\geq 0: ~\A^k(X) = \Aut(X) \big\}.$$ Similarly, for a simply connected CW–complex, Oda and Yamaguchi defined the homology and cohomology self-closeness numbers as $$\NAA(X): = \min \big\{k\geq 0: ~\AA^k(X) = \Aut(X) \big\},$$ $$\NAAA(X): = \min \big\{k\geq 0: ~\AAA_k(X) = \Aut(X) \big\}.$$
These numbers have been computed by many authors over the last one decade in \cite{LCHSCS, OYSEC, OYSF, OYSCFC}. The (homotopy) self-closeness number of the product of spaces has been studied under the $k$-reducibility assumptions in \cite{DSMSE, LSPS}. We establish some results on the homology self-closeness number of wedge sums using the $k$-reducibility assumption. Also, we directly compute the homology self-closeness numbers of wedge sums of certain spaces for which the $k$-reducibility assumption does not hold.  

The paper is organized as follows. In Section \ref{reducibility}, we explicitly formulate the notion of $k$-reducibility for self-maps in the monoid $\AA^k(X\vee Y)$. First, we establish a relation between the monoids $\AA^k(X\times Y)$ and $\AA^k(X\vee Y)$ and this is followed by a relation of the $k$-reducibility of self-maps between the monoids $\AA^k(X\vee Y)$ and $\AA^k(X\times Y)$.  We derive explicit connections between the $k$-reducibility of self-maps in the monoid associated with wedge sums and that of their suspensions, as well as between the monoid of product spaces and that of their loop spaces; see Proposition \ref{prop:redu_suspension} and \ref{prop:redu_loop}, respectively. Further, we determine some sufficient conditions for $k$-reducibility of self-maps in $\AA^k(X\vee Y)$; see Lemma \ref{red}. Since cohomology rings $H^*(X)$ and $H^*(Y)$ have a richer algebraic structure, ring homomorphisms induced in cohomology by self-maps of $X\vee Y$ provide an effective tool to verify the $k$-reducibility of self-maps in $\AAA_k(X\vee Y)$ and $\AA^k(X\vee Y)$; see Theorem \ref{thm:redu_cohom}. We find some useful connections between the $k$-reducibility of a self-map and the nilpotency of its induced endomorphisms on homology and cohomology groups in Section \ref{nilpotent_reducibility}. In particular, we obtain a sufficient condition for the $k$-reducibility of self-maps in $\AA^k(X\vee Y)$; see Theorem \ref{redu_radical}. Further, we establish a sufficient condition for the nilpotency of an endomorphism over a finite abelian group in Lemma \ref{prime_nilpotent}, which leads to a $k$-reducibility result in Proposition \ref{nilpotent_reducibility_homdis}. The nilpotent ring endomorphisms of the cohomology rings $H^*(X)$ and $H^*(Y)$ are shown to determine the reducibility of self-maps in the group $\Aut(X\vee Y)$; see Theorem \ref{thm_redu_multi_generators}. In Section \ref{self-closeness number}, we compute the homology self-closeness numbers of the wedge of spaces under the assumption of $k$-reducibility. We determine the homology self-closeness number of the wedge sum of two spaces in terms of the corresponding invariants of the individual spaces; see Theorem \ref{equality_self-closeness_wedge_product}. We also investigate several relations between the homology self-closeness numbers of products and wedge sums of spaces. Moreover, we employ a cohomological approach to compute these numbers in certain cases where the $k$-reducibility assumption of a self-map in the monoid of wedge sums fails; see Proposition \ref{wed}. In Section \ref{atomic decomposition}, we introduce the notion of an $n$-atomic space. We explore relations among the atomicity of a space, its suspension and its loop space in Proposition \ref{atomicity_loop_suspension}. Spaces of this type play a crucial role in determining the k-reducibility of self-maps in the monoid $\AA^k(X)$, where the space $X$ admits decompositions as wedges of atomic spaces; see Theorem \ref{reducibility_atomic}. Finally, in Section \ref{application} we present applications of the preceding results, with primary emphasis on the homology self-closeness number of an $n$-atomic space; see Lemma \ref{atomic_selfcloseness}. By analyzing $k$-reducibility, we determine the homology self-closeness numbers of wedge sums of finitely many 
$n$-atomic spaces whose individual components possess distinct homology self-closeness numbers; see Theorem \ref{thm_selfcloseness_redu} and Theorem \ref{thm_higher_selfcloseness_redu}.

In this paper, all spaces have the homotopy type of simply connected CW-complexes, and all maps are basepoint-preserving. Moreover, for notational convenience, we do not distinguish between a map $f\colon X\to X$ and its homotopy class in $[X,X]$. Throughout the paper, all homology groups are considered in the reduced sense. For an abelian group $G$, the set of all endomorphisms of $G$ is denoted by $\End(G)$. This is a ring whose set of units is $\Aut(G)$, the collection of all automorphisms of $G$. 
\begin{ack}
I would like to thank the National Board for Higher Mathematics (NBHM) for Grant No. 0204/7/2025/R\&D-II/8842.
\end{ack}

\sect{Reducibility}\label{reducibility}
The $k$-reducibility of a self-map in $\AA^k(X\vee Y)$ (or $\AA^k(X\times Y)$) is, in general, a subtle property that depends strongly on the structure of the underlying based spaces $X$ and $Y$. This notion plays a significant role in the determination of the group of self-homotopy equivalences of wedge sums and product spaces in \cite{HWSW, PPSHE, PPRSE}. In this section, we primarily investigate the $k$-reducibility of self-maps in the monoid $\AA^k(X\vee Y)$. Under the assumption of $k$-reducibility, we establish several results that are applied in the subsequent sections. To carry out this study, we first recall some basic definitions and conventions..  The \emph{connectivity} of a space $X$ is defined by $\conn(X):= \min \big\{i\geq 0~:~ \pi_{i+1}(X)\neq 0\big\}$. This implies $\pi_i(X) = 0$ for all $i\leq \conn(X)$. 

A self-map $f\colon X\to X'$ is called a \emph{homotopical $n$-equivalence} (or \emph{homological $n$-equivalence}) if the induced map $f_{\#}\colon \pi_i(X)\to \pi_i(X')$ (or $f_{\ast}\colon H_i(X)\to H_i(X')$) is an isomorphism for all $i<n$ and an epimorphism for $i=n$. Similarly, a self-map $f\colon X\to X'$ is called a \emph{cohomological} $n$-equivalence if $f^{\ast}\colon H^i(X)\to H^i(X')$ is an isomorphism for all $i<n$ and a monomorphism for $i=n$; see \cite[Section 6.4]{AIH}.

\begin{lemma}[{\cite[Lemma 6.4.13, Theorem 6.4.15]{AIH}}]\label{leq}
Let $f\colon X\to X'$ be a map between simply connected spaces. Then the following are equivalent:
\begin{enumerate}[{(a)}]
\item $f$ is a homotopical $n$-equivalence.
\item $f$ is a homological $n$-equivalence.
\item $f$ is a cohomological $n$-equivalence.
\end{enumerate}
\end{lemma}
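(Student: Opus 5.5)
The plan is to reduce all three conditions to a statement about the mapping cylinder pair. Replace $f$ by the inclusion $X\hookrightarrow M_f$, which is homotopy equivalent to $f$. We then have long exact sequences in homotopy, homology and cohomology for the pair $(M_f,X)$. In these terms:
\begin{itemize}
\item $f$ is a homotopical $n$-equivalence exactly when $\pi_i(M_f,X)=0$ for all $i\le n$, by exactness of $\pi_i(X)\to\pi_i(M_f)\to\pi_i(M_f,X)\to\pi_{i-1}(X)$;
\item $f$ is a homological $n$-equivalence exactly when $H_i(M_f,X)=0$ for $i\le n$;
\item $f$ is a cohomological $n$-equivalence exactly when the relative cohomology vanishes in the same range.
\end{itemize}
Since both spaces are simply connected, low degrees cause no trouble: $\pi_1(M_f,X)$ and the $\pi_0$-level terms vanish automatically.

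\emph{(a)$\Rightarrow$(b).} The pair $(M_f,X)$ is $n$-connected, so by CW approximation we may assume $M_f$ is obtained from $X$ by attaching cells of dimension $\ge n+1$. Cellular homology then gives $H_i(M_f,X)=0$ for $i\le n$.

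\emph{(b)$\Rightarrow$(a).} This is the relative Hurewicz theorem, and here simple connectivity of $X$ is essential. Suppose $(M_f,X)$ is $(m-1)$-connected with $m\ge 2$ and $m\le n$. Because $\pi_1(X)=0$, the relative Hurewicz map $\pi_m(M_f,X)\to H_m(M_f,X)$ is an isomorphism, with no quotient by the $\pi_1$-action needed. We start the induction at $m=2$, using simple connectivity of both spaces, and climb inductively to get $\pi_i(M_f,X)=0$ for all $i\le n$.

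\emph{(b)$\Leftrightarrow$(c).} I would use the universal coefficient theorem
\[
0\to \mathrm{Ext}(H_{i-1}(M_f,X),\ZZ)\to H^i(M_f,X)\to \Hom(H_i(M_f,X),\ZZ)\to 0 .
\]
If the relative homology vanishes through degree $n$, the relative cohomology vanishes through degree $n$ as well. The converse is the main obstacle, because integral cohomology can miss divisible or torsion phenomena: $\Hom(A,\ZZ)=0$ does not force $A=0$.

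My first approach to the converse is an induction on degree. Assume $H_j(M_f,X)=0$ for $j<i$ and $H^{i}(M_f,X)=0$ and $H^{i+1}(M_f,X)=0$ (for $i\le n$; the degree $n+1$ term corresponds to the monomorphism condition). Then $\mathrm{Ext}(H_i(M_f,X),\ZZ)=0$ from degree $i+1$ and $\Hom(H_i(M_f,X),\ZZ)=0$. For finitely generated groups this forces $H_i(M_f,X)=0$, so I would assume finite type here, which is the running convention in \cite{AIH}. Failing that, I would state the cohomological equivalence with arbitrary coefficients, and use $\ZZ/p$ and $\mathbb{Q}$ coefficients to detect vanishing.

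Finally, I would carefully check the boundary degrees in the translation between the relative and absolute statements. Specifically, the condition "epimorphism at $n$" for homology should correspond to "monomorphism at $n$" for cohomology, with the precise indexing taken from \cite[Section 6.4]{AIH}. Since the lemma is quoted from \cite{AIH}, the argument ultimately rests on citing Lemma 6.4.13 and Theorem 6.4.15 there.
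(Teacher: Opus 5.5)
Your reduction to the pair $(M_f,X)$ is sound, and so are your arguments for (a)$\Leftrightarrow$(b) (relative Hurewicz, using $\pi_1(X)=0$) and for (b)$\Rightarrow$(c) (universal coefficients). The paper itself gives no argument beyond the citation to \cite{AIH}.

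The gap is in (c)$\Rightarrow$(b), at the top degree. As your third bullet correctly says, $f$ is a cohomological $n$-equivalence exactly when $H^i(M_f,X)=0$ for $i\le n$. Injectivity of $f^*$ in degree $n$, together with surjectivity in degree $n-1$, is precisely the vanishing of $H^n(M_f,X)$. Nothing in the hypothesis controls $H^{n+1}(M_f,X)$. So your parenthetical claim that the degree $n+1$ term ``corresponds to the monomorphism condition'' is wrong. At the step $i=n$ you only know $\Hom(H_n(M_f,X),\ZZ)=0$, which misses torsion.

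Finite type does not rescue this, because with integral coefficients the implication is false. For $n\ge 2$, take the degree-$p$ map $f\colon S^n\to S^n$. It is injective on $H^n(S^n;\ZZ)\cong\ZZ$, and the reduced groups below degree $n$ vanish, so $f$ is a cohomological $n$-equivalence in the sense of the statement. Yet $f_*$ is not onto $H_n(S^n)$. Here $H_n(M_f,X)\cong\ZZ/p$, $H^n(M_f,X)=0$ and $H^{n+1}(M_f,X)\cong\ZZ/p$.

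What your induction actually proves is a shifted statement: for spaces of finite type, an integral cohomological $(n+1)$-equivalence is a homological $n$-equivalence. To get the equivalence as stated, (c) has to be read with arbitrary coefficients. Then your fallback is the right argument and should be the main one. If $H_j(M_f,X)=0$ for $j<i\le n$, universal coefficients give $H^i(M_f,X;G)\cong\Hom(H_i(M_f,X),G)$. Choosing $G=H_i(M_f,X)$ forces $H_i(M_f,X)=0$, with no finiteness assumption.

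If you only allow prime fields, you need finite type. The conditions $\Hom(A,\mathbb{Q})=0$ and $\Hom(A,\mathbb{F}_p)=0$ for all $p$ say that $A$ is torsion and divisible. That forces $A=0$ only when $A$ is finitely generated.
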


%\begin{proof}
%By Lemma 6.4.11, Proposition 6.4.14 and Theorem 6.4.15 of \cite{AIH}.
%\end{proof}

A self-map $g\colon X\vee Y\to X\vee Y$ determines two component maps $g\circ \iota_X\colon X\to X\vee Y$ and $g\circ \iota_Y\colon Y\to X\vee Y$. For each pair $I,J\in \{X,Y\}$, we defined a map $g_{IJ}\colon J\to I$ by $g_{IJ}:= p_I\circ g\circ \iota_J$. Here $\iota_J\colon J\to X\vee Y$ and $p_I\colon X\vee Y\to I$ are coordinate inclusion and projection maps, respectively. 
 
\begin{definition}\label{def:hom_redu}
A self-map $g\in \AA^k(X\vee Y)$ is said to be a \emph{$k$-reducible} if $g_{XX}\in \AA^k(X)$ and $g_{YY}\in \AA^k(Y)$.
\end{definition}

\begin{definition}\label{def:cohom_redu}
A self-map $g\in \AAA_k(X\vee Y)$ is said to be a \emph{$k$-reducible} if $g_{XX}\in \AAA_k(X)$ and $g_{YY}\in \AAA_k(Y)$.
\end{definition}

Note that a self-map $g\in \Aut(X\vee Y)$ is said to be reducible if it is $k$-reducible for every $k\geq 1$, either in the homological or in the cohomological sense, as defined in Definition \ref{def:hom_redu} and \ref{def:cohom_redu}. However, the converse statement does not necessarily hold. In particular, even in situations where every self-map in the group $\Aut(X\vee Y)$ is reducible, it may still be difficult to verify the $k$-reducibility of self-maps in the monoids $\AA^k(X\vee Y)$ and $\AAA_k(X\vee Y)$; see Theorem \ref{thm_redu_multi_generators}.

\begin{lemma}\label{redu_wedge_product}
Suppose $f$ is a self-map in $\AA^k(X\times Y)$ such that $g := f|_{X\vee Y}$ is in $\AA^k(X\vee Y)$. Then $f$ is $k$-reducible if and only if $g$ is $k$-reducible.
\end{lemma}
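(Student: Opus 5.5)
The plan is to show that the diagonal components of $f$ and of $g$ agree as maps: $g_{XX} = f_{XX}$ and $g_{YY} = f_{YY}$. Once this holds, the definition of $k$-reducibility for $f$ (namely $f_{XX}\in \AA^k(X)$, $f_{YY}\in\AA^k(Y)$) coincides literally with Definition \ref{def:hom_redu} for $g$, and the equivalence is immediate.

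\textbf{Setting up the maps.} Let $j\colon X\vee Y\to X\times Y$ denote the canonical inclusion. The hypothesis ``$g := f|_{X\vee Y}$ is a self-map of $X\vee Y$'' means that $f\circ j$ factors (up to homotopy) as $j\circ g$. The coordinate maps satisfy
\[
\iota'_J = j\circ \iota_J \quad\text{and}\quad p'_I\circ j = p_I ,
\]
where $p_I\colon X\vee Y\to I$ collapses the other summand. Both identities hold strictly: $p'_X\circ j$ sends $x\mapsto x$ and $y\mapsto *$, which is exactly $p_X$, and similarly for $Y$.

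\textbf{The computation.} For $I\in\{X,Y\}$,
\[
g_{II} = p_I\circ g\circ \iota_I = p'_I\circ j\circ g\circ\iota_I \simeq p'_I\circ f\circ j\circ \iota_I = p'_I\circ f\circ \iota'_I = f_{II}.
\]
Thus $g_{XX}=f_{XX}$ and $g_{YY}=f_{YY}$ in $[X,X]$ and $[Y,Y]$. It follows that $f$ is $k$-reducible if and only if $g$ is.

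\textbf{Expected obstacle.} The only point needing care is interpreting the restriction: we need $f\circ j\simeq j\circ g$, and then compose with $p'_I$. If $g$ is only specified up to homotopy, the displayed chain still holds in homotopy classes, since each step is a composition of homotopy classes. The hypotheses $f\in\AA^k(X\times Y)$ and $g\in \AA^k(X\vee Y)$ are needed only so that both notions of $k$-reducibility are defined; they play no further role in the argument.
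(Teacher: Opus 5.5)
Your proposal is correct and follows the paper's proof. Both reduce the claim to $g_{XX}=f_{XX}$ and $g_{YY}=f_{YY}$, which follow from $p'_I\circ j=p_I$, $\iota'_J=j\circ\iota_J$ and $f\circ j\simeq j\circ g$; you spell out these factorizations, which the paper's one-line computation uses without stating them.
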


\begin{proof}
A self-map $f\colon X\times Y\to X\times Y$ can be expressed as $f = (f_X,f_Y)$, where $f_X = p'_X\circ f\colon X\times Y\to X$ and $f_Y = p'_Y\circ f\colon X\times Y\to Y$. Moreover, a self-map $g\colon X\vee Y\to X\vee Y$ is represented as $g = (g_X,g_Y)$, where $g_X = g\circ \iota_X\colon X\to X\vee Y$ and $g_Y = g\circ \iota_Y\colon Y\to X\vee Y$.
Therefore, for $I,J\in \{X,Y\}$, we have $f_{IJ} = p'_I\circ f\circ \iota'_J\colon J\to I$, where $p'_I\colon X\times Y\to I$ and $\iota'_J \colon J\to X\times Y$.
Similarly, for $I,J\in \{X,Y\}$, we have $g_{IJ} = p_I\circ g\circ \iota_J \colon J\to I$, where $p_I\colon X\vee Y\to I$ and  $\iota_J\colon J\to X\vee Y$.

Observe that $g_{XX} = p_X\circ g\circ \iota_X = p'_X\circ f\circ \iota'_X = f_{XX}$ and
$g_{YY} = p_Y\circ g\circ \iota_Y = p'_Y\circ f\circ \iota'_Y = f_{YY}$. This completes the proof.
\end{proof}

\begin{lemma}\label{homo_monoid}
Assume that $X$ and $Y$ are finite-dimensional CW-complexes satisfying the condition $\conn(X\times Y, X\vee Y)\geq \dim(X\vee Y)$. Suppose $H_m(X)$ and $H_m(Y)$ are finitely generated, where $m = \dim(X\vee Y)$.
\begin{enumerate}[(a)]
\item For each $k$, we obtain a monoid homomorphism $$\phi\colon \AA^k(X\times Y)\to \AA^k(X\vee Y), \text{ defined as } \phi(f):= f|_{X\vee Y}.$$
\item For each $k$, there exists a monoid homomorphism $$\Phi \colon \AA^k(X\times Y)\to \AA^k(X\wedge Y), ~\text{ defined by }  \Phi(f):= \bar{f}\colon X\times Y/_{X\vee Y}\to X\times Y/_{X\vee Y}.$$
 \end{enumerate}
\end{lemma}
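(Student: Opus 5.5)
The plan is first to show that every self-map $f$ of $X\times Y$ can be deformed, up to homotopy, so that it carries $X\vee Y$ into $X\vee Y$. The restriction $f|_{X\vee Y}$ is then a well-defined homotopy class, and $f$ induces a map of pairs $(X\times Y, X\vee Y)\to(X\times Y,X\vee Y)$, hence a map $\bar f$ on the quotient $X\times Y/_{X\vee Y}=X\wedge Y$. Once this is in place, I would check three things: well-definedness on homotopy classes, preservation of composition and identity, and that the induced maps land in the monoids $\AA^k$.

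\textbf{Compression step.} Consider $f\circ\iota\colon X\vee Y\to X\times Y$, where $\iota$ is the inclusion. The pair $(X\times Y,X\vee Y)$ is $c$-connected with $c=\conn(X\times Y,X\vee Y)\geq m=\dim(X\vee Y)$. By the compression lemma (cellular approximation / obstruction theory for relative CW complexes), a map from an $m$-dimensional CW-complex into $X\times Y$ is homotopic, rel nothing, to a map into $X\vee Y$. Homotopy extension for the CW pair $(X\times Y,X\vee Y)$ then lets us replace $f$ by a homotopic $f'$ with $f'(X\vee Y)\subseteq X\vee Y$.

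\textbf{Uniqueness step (main obstacle).} Two such compressions must agree up to homotopy in $X\vee Y$. This needs the relative statement applied to $(X\vee Y)\times I$, which has dimension $m+1$. In general this requires $c\geq m+1$, whereas the hypothesis only gives $c\geq m$. This is exactly where I expect the finiteness hypothesis on $H_m(X)$ and $H_m(Y)$ to be used, possibly through the top-dimensional obstruction.

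I would handle it as follows. The obstruction to uniqueness lies in $\pi_{m+1}(X\times Y,X\vee Y)$ on top cells. Since $\pi_{m+1}$ of the pair is $H_{m+1}$-related to the smash $X\wedge Y$ by relative Hurewicz, and $X\wedge Y$ has bottom cells controlled by $H_m$-data, I would argue that the relevant maps can be compared through their induced maps on homology. If full uniqueness fails, I would instead define $\phi(f)$ as \emph{a} compression and check that $f|_{X\vee Y}$ is determined, since $(p_X,p_Y)$-components of $f\circ\iota$ determine $g_{IJ}$ directly. Under the stated hypotheses, wedge maps are detected by their components.

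\textbf{Homomorphism and monoid conditions.} Compositions of compressible maps compress to the composite, and $\mathrm{id}$ restricts to $\mathrm{id}$, so $\phi$ and $\Phi$ preserve composition and identity. For the monoid condition I would apply the five lemma to the long exact homology sequence of the pair. The split injection $H_*(X\vee Y)\to H_*(X\times Y)$ (split by the projections) gives a short exact sequence
\[0\to H_i(X\vee Y)\to H_i(X\times Y)\to H_i(X\wedge Y)\to 0,\]
natural for maps of pairs. If $f_*$ is an isomorphism in degrees $\le k$, we want both end maps to be isomorphisms as well. The five lemma alone does not force this, so I would argue as follows. $H_i(X\wedge Y)=0$ for $i\leq c$, and for $i\le m\le c$ the left map is an isomorphism, so $g_*=f_*$ there. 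In degrees above $m$, $H_i(X\vee Y)=0$, so $\bar f_*=f_*$. Thus each degree reduces to a single nonzero term. Finite generation of $H_m$ is what I would use in the borderline degree, where a surjective endomorphism of a finitely generated group must be an isomorphism.
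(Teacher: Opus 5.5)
You have correctly identified the real issue, but the proposal does not resolve it. Compression only needs $c=\conn(X\times Y,X\vee Y)\ge m$. For $\phi(f)=f|_{X\vee Y}$ to be a well-defined element of $[X\vee Y,X\vee Y]$, however, you need $\iota_*\colon[X\vee Y,X\vee Y]\to[X\vee Y,X\times Y]$ to be injective, and Whitehead's theorem gives this only when $\dim(X\vee Y)<c$.

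Your fallback is circular. The components $p_X\circ g$ and $p_Y\circ g$ of a map $g$ into $X\vee Y$ are just the two coordinates of $\iota\circ g$. So ``wedge maps are detected by their components'' restates that injectivity rather than proving it. Comparing induced maps on homology cannot help either (see below), and the finite generation of $H_m$ has nothing to do with this obstruction. The paper's own proof does not address the point: it simply restricts a cellular representative.

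In fact the step cannot be filled at the borderline $c=m$. Let $X=S^2$ and $Y=M(\ZZ/d,2)=S^2\cup_d e^3$ with $d\ge 2$.
\begin{itemize}
\item The relative cells of $(X\times Y,X\vee Y)$ lie in dimensions $4$ and $5$, and $\pi_4(X\times Y,X\vee Y)\cong H_4(X\wedge Y)\cong\ZZ/d$. Hence $c=3=m$, and $H_3(X)=H_3(Y)=0$, so all hypotheses hold.
\item This $\ZZ/d$ is the summand $\ker\big(\pi_3(X\vee Y)\to\pi_3(X\times Y)\big)$. It is generated by the Whitehead product $w=[\iota_X,\iota_Y j]$, where $j$ is the bottom cell of $Y$. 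In particular $w\notin d\,\pi_3(X\vee Y)$.
\item By the Puppe sequence of $S^2\xrightarrow{d}S^2\to Y\xrightarrow{q}S^3$, the class $\theta=w\circ q$ is nonzero in the group $[Y,X\vee Y]$. On the other hand $\iota\circ\theta=0$, $p_X\theta=p_Y\theta=0$, and $\theta_*=0$.
\item Put $g'=(\iota_X,\iota_Y+\theta)$. It has the same components and the same induced maps on homology as $\Id_{X\vee Y}$, and $\iota g'\simeq\iota$.
\item By homotopy extension, $g'=F|_{X\vee Y}$ for some (cellular) $F\simeq\Id_{X\times Y}$. Yet $g'\not\simeq\Id_{X\vee Y}$.
\end{itemize}
So under the stated hypothesis, restriction is not well defined on homotopy classes.

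If you strengthen the hypothesis to $c\ge m+1$, your outline goes through:
\begin{itemize}
\item $\phi(f)$ is the unique class with $\iota\circ\phi(f)=f\circ\iota$, and multiplicativity follows from this uniqueness.
\item Compressing homotopies on $(X\vee Y)\times I$ rel the ends makes $\bar f$ well defined.
\item Your degreewise argument proves the monoid condition: $\iota_*$ is an isomorphism for $i\le m$, and $q_*$ is an isomorphism for $i>m$. This is more direct than the paper's five-lemma/Hopfian argument and does not use finite generation of $H_m$.
\end{itemize}
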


\begin{proof}
Let $f\colon X\times Y\to X\times Y$ be a cellular map. Since $\conn(X\times Y, X\vee Y)\geq \dim(X\vee Y)$, we have $f|_{X\vee Y}\colon X\vee Y\to X\vee Y$. It follows that there exists a map over the quotient $\bar{f}\colon X\wedge Y=X\to X\wedge Y$. See the following commutative diagram:
\begin{equation}\label{diag_cellular}
\xymatrixrowsep{2 em}
\xymatrixcolsep{5 em}
\xymatrix{
X\wedge Y  \ar[r]^{\bar{f}}  & X \wedge Y \\
X\times Y \ar[u]^{q} \ar[r]^{f} & X\times Y \ar[u]_{q}\\
X\vee Y \ar[u]^{\iota} \ar[r]_{f|_{X\vee Y}} & X\vee Y \ar[u]_{\iota}
}
\end{equation}
\begin{enumerate}[(a)]\setlength\itemsep{1.5 em}
\item Any self-map from $X\times Y$ to itself can be replace by a cellular map over $X\times Y$ to itself upto homotopy, by cellular approximation theorem. Thus, $(f\circ g)|_{X\vee Y} = f|_{X\vee Y}\circ g|_{X\vee Y}$. Therefore, $\phi(f\circ g) = \phi(f)\circ \phi(g)$. Moreover, $\iota\colon X\vee Y\to X\times Y$ is an $m$-equivalence, where $m = \dim(X\vee Y)$. If $k<m$ and $f\in \AA^k(X\times Y)$, then the commutativity of the homology groups associated with diagram \eqref{diag_cellular} implies $f|_{X\vee Y}\in \AA^k(X\vee Y)$. It is sufficient to show for $k=m$. If $f\in \AA^m(X\times Y)$, then $(f|_{X\vee Y})_*\colon H_m(X\vee Y)\to H_m(X\vee Y)$ is an epimorphism and so an isomorphism, since $H_m(X\vee Y)$ is fintely generated. Therefore, $\phi$ is a homomorphism of monoids.
\item Let $f\in \AA^k(X\times Y)$. From part (a), we have $f|_{X\vee Y}\in \AA^k(X\vee Y)$. Applying the five lemma on the following commutative diagram of abelian groups:
\begin{equation}\label{diag_five}
\xymatrixrowsep{3em}
\xymatrixcolsep{2em}
\xymatrix{
	H_i(X\vee Y)   \ar[r]^{\iota_*}  \ar[d]^{(f|{X\vee Y})_*} &  H_i(X\times Y) \ar[d]^{f_*} \ar[r]^{q_*} & H_i(X\wedge Y) \ar[d]^{\bar{f}_*} \ar[r]^{} & H_{i-1}(X\vee Y) \ar[d]^{(f|_{X\vee Y})_*} \ar[r]^{\iota_*} & H_{i-1}(X\times Y) \ar[d]^{f_*} \\
	H_i(X\vee Y) \ar[r]_{\iota_*} & H_i(X\times Y) \ar[r]_{q_*} & H_i(X\wedge Y) \ar[r]_{} & H_{i-1}(X\vee Y)  \ar[r]_{\iota_*} & H_{i-1}(X\times Y).
	}
\end{equation}
It follows that $\bar{f}\in \AA^k(X\wedge Y).$ Further, $\Phi(f\circ g) = \overline{f\circ g} = \bar{f}\circ \bar{g} = \Phi(f)\circ \Phi(g)$. Therefore, we get the desired result.
\end{enumerate}
\end{proof}

The following corollary follows immediately from Lemma \ref{redu_wedge_product} and Lemma \ref{homo_monoid}.
\begin{corollary}\label{redu_monoid_wedge_product}
Suppose $X$ and $Y$ are finite-dimensional CW-complexes such that $\conn(X\times Y, X\vee Y)\geq \dim(X\vee Y)$. Let $H_m(X)$ and $H_m(Y)$ be finitely generated, where $m=\dim(X\vee Y)$. For each fixed $k\geq 1$, if every self-map in $\AA^k(X\vee Y)$ is $k$-reducible, then any self-map in $\AA^k(X\times Y)$ is $k$-reducible.
\end{corollary}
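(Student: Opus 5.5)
The argument will be a direct combination of Lemma \ref{homo_monoid}(a) and Lemma \ref{redu_wedge_product}. Fix $k\geq 1$ and let $f\in \AA^k(X\times Y)$ be arbitrary. Replacing $f$ by a cellular representative of its homotopy class, we may regard the restriction $g:=f|_{X\vee Y}$ as a self-map of $X\vee Y$; this uses the connectivity hypothesis $\conn(X\times Y, X\vee Y)\geq \dim(X\vee Y)$, exactly as in the proof of Lemma \ref{homo_monoid}. The goal is to show $f_{XX}\in \AA^k(X)$ and $f_{YY}\in \AA^k(Y)$.

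\textbf{First step.} Since $X$ and $Y$ are finite-dimensional and satisfy the connectivity condition, and since $H_m(X)$ and $H_m(Y)$ are finitely generated for $m=\dim(X\vee Y)$, Lemma \ref{homo_monoid}(a) applies. It gives $g=\phi(f)\in \AA^k(X\vee Y)$.

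\textbf{Second step.} By the hypothesis of the corollary, every element of $\AA^k(X\vee Y)$ is $k$-reducible, so in particular $g$ is $k$-reducible. Now $f\in\AA^k(X\times Y)$ and its restriction $g$ lies in $\AA^k(X\vee Y)$, which are precisely the hypotheses of Lemma \ref{redu_wedge_product}. That lemma then says $f$ is $k$-reducible, since $f_{XX}=g_{XX}$ and $f_{YY}=g_{YY}$.

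\textbf{Potential obstacle.} The only delicate point is that $g$ depends on choosing a cellular representative of $f$, so one should check that this causes no ambiguity. It does not, for two reasons:
\begin{enumerate}[(i)]
\item $k$-reducibility of $f$ is stated through the homotopy classes $f_{XX}=p'_X\circ f\circ\iota'_X$ and $f_{YY}$, which do not depend on the representative.
\item For the chosen cellular representative, these classes agree with $g_{XX}$ and $g_{YY}$.
\end{enumerate}
Hence the conclusion holds for $f$ itself, and since $f$ was arbitrary, every self-map in $\AA^k(X\times Y)$ is $k$-reducible.
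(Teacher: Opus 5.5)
Your proposal is correct and matches the paper's argument: the paper also obtains this corollary immediately from Lemma \ref{homo_monoid}(a), which places $f|_{X\vee Y}$ in $\AA^k(X\vee Y)$, followed by the hypothesis and Lemma \ref{redu_wedge_product} via $f_{XX}=g_{XX}$ and $f_{YY}=g_{YY}$. Your remark that the choice of cellular representative does not matter is correct, and it is a useful point that the paper leaves implicit.
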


Recall the generalized Freudenthal suspension theorem: Let $Y$ be an $n$-connected CW-complex and $X$ be a finite dimensional CW-complex. Then the suspension map $\Sigma\colon [X,Y]\to [\Sigma X,\Sigma Y]$ is a bijection if $\dim(X)< 2n+1$ and a surjection if $\dim(X) = 2n+1$ (\cite[Theorem 1.21]{CSH}).

The following proposition establishes a relationship between the
$k$-reducibility of self-maps on a wedge of spaces and on the wedge of their suspension spaces.
\begin{proposition}\label{prop:redu_suspension}
Fix a $k\geq 1$, any self-map in $\AA^k(X\vee Y)$ is $k$-reducible if every self-map in $\AA^{k+m}(\Sigma^m X\vee \Sigma^m Y)$ is $(k+m)$-reducible for some $m\geq 1$. 

Moreover, if $X\vee Y$ is $n$-connected and $\dim(X\vee Y)\leq 2n+1$, then $k$-reducibility of each self-map in $\AA^k(X\vee Y)$ implies $(k+m)$-reducibility of every self-maps in $\AA^{k+m}(\Sigma^m X\vee \Sigma^m Y)$ for all $m\geq 1$.
\end{proposition}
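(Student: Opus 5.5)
The plan is to compare a self-map with its $m$-fold suspension, using that $\Sigma^m(X\vee Y)=\Sigma^m X\vee \Sigma^m Y$ and that suspension shifts reduced homology isomorphically: $H_i(Z)\cong H_{i+m}(\Sigma^m Z)$, naturally in $Z$. Two compatibilities drive everything. First, the suspension isomorphism identifies $(\Sigma^m g)_*$ in degree $i+m$ with $g_*$ in degree $i$. Second, the suspended coordinate maps $\Sigma^m\iota_J$ and $\Sigma^m p_I$ are exactly the coordinate inclusions and projections of $\Sigma^mX\vee\Sigma^mY$. Consequently
\[
(\Sigma^m g)_{IJ}=\Sigma^m(g_{IJ}) \qquad \text{for } I,J\in\{X,Y\}.
\]

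\textbf{First statement.} Let $g\in\AA^k(X\vee Y)$ and put $G:=\Sigma^m g$.
\begin{enumerate}[(1)]
\item Show $G\in\AA^{k+m}(\Sigma^mX\vee\Sigma^mY)$. For $m\le i\le k+m$, $G_*$ on $H_i$ corresponds to $g_*$ on $H_{i-m}$, which is an isomorphism. For $i<m$, $H_i(\Sigma^m(X\vee Y))=0$ because $X\vee Y$ is connected.
\item By hypothesis $G$ is $(k+m)$-reducible, so $\Sigma^m(g_{XX})=G_{XX}\in\AA^{k+m}(\Sigma^mX)$.
\item Shift back through the suspension isomorphism: $(g_{XX})_*$ is an isomorphism on $H_i(X)$ for all $i\le k$. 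Hence $g_{XX}\in\AA^k(X)$, and $g_{YY}\in \AA^k(Y)$ in the same way.
\end{enumerate}
No desuspension is needed here, only the naturality of the suspension isomorphism.

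\textbf{Second statement.} This is the converse direction: take an arbitrary $h\in\AA^{k+m}(\Sigma^mX\vee\Sigma^mY)$ and show it is $(k+m)$-reducible. Here we must desuspend $h$.
\begin{enumerate}[(1)]
\item \emph{Desuspend $h$.} Since $X\vee Y$ is $n$-connected and $\dim(X\vee Y)\le 2n+1$, the generalized Freudenthal theorem quoted above makes $\Sigma\colon[X\vee Y,X\vee Y]\to[\Sigma(X\vee Y),\Sigma(X\vee Y)]$ surjective, and bijective when $\dim<2n+1$. Iterate this. $\Sigma^j(X\vee Y)$ is $(n+j)$-connected of dimension at most $2n+1+j\le 2(n+j)+1$, so each stage $[\Sigma^j(X\vee Y),\Sigma^j(X\vee Y)]\to[\Sigma^{j+1}(X\vee Y),\Sigma^{j+1}(X\vee Y)]$ is again surjective. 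Composing the stages gives $g$ with $\Sigma^mg\simeq h$.
\item \emph{Place $g$ in the monoid.} By the homology shift, $g_*$ is an isomorphism on $H_i$ for $i\le k$, so $g\in\AA^k(X\vee Y)$.
\item \emph{Conclude.} By assumption $g$ is $k$-reducible. Then $h_{XX}\simeq\Sigma^m(g_{XX})$ induces isomorphisms on $H_{i+m}$ for $i\le k$. It also induces isomorphisms in degrees below $m$, where these groups vanish since $X$ is connected. So $h_{XX}\in\AA^{k+m}(\Sigma^mX)$, and similarly for $Y$.
\end{enumerate}

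The main obstacle is the desuspension in step (1) of the second statement. The delicate points are the iteration of Freudenthal, which needs the dimension–connectivity inequality to persist at every stage (checked above), and the requirement that $\Sigma^m g\simeq h$ as based maps so that the components $h_{IJ}$ are well defined up to homotopy. Once $g$ exists, the rest is the formal identity $(\Sigma^m g)_{IJ}=\Sigma^m(g_{IJ})$ together with naturality of the suspension isomorphism.
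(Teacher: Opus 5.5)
Your proposal is correct and follows essentially the same route as the paper. The first statement uses naturality of the suspension isomorphism together with $(\Sigma^m g)_{IJ}=\Sigma^m(g_{IJ})$, and the converse desuspends $h$ via the generalized Freudenthal theorem; your check that the connectivity--dimension inequality persists at every stage of the iterated suspension, and your treatment of degrees below $m$, make explicit details that the paper only asserts.
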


\begin{proof}
For $m\geq 1$, we know that $\Sigma^m(X\vee Y)\simeq \Sigma^m X\vee \Sigma^m Y$. Let $f\colon X\to X$ be a self-map. Consider a commutative diagram:
\begin{equation}\label{redu_suspension}
\xymatrixrowsep{2 em}
\xymatrixcolsep{5 em}
\xymatrix{
H_i(X) \ar[r]^{f_*} \ar[d]_{\cong}   & H_i(X) \ar[d]^{\cong}\\
H_{i+m}(\Sigma^m X) \ar[r]_{(\Sigma^m f)_*} &  H_{i+m}(\Sigma^m X).
}
\end{equation}
Thus, for $m\geq 1$, we have $f\in \AA^k(X)$ if and only if $\Sigma^m f\in \AA^{k+m}(\Sigma^m X)$. Let $g\in \AA^k(X\vee Y)$. Then $\Sigma^m g\in \AA^{k+m}(\Sigma^m X\vee \Sigma^m Y)$. By the given assumption, we obtain $(\Sigma^m g)_{\Sigma^m X \Sigma^m X}\in \AA^{k+m}(\Sigma^m X)$ and $(\Sigma^m g)_{\Sigma^m Y \Sigma^m Y}\in \AA^{k+m}(\Sigma^m Y)$ for some $m\geq 1$. This implies $\Sigma^m g_{XX} = \Sigma^m (p_X\circ g\circ \iota_X) = p_{\Sigma^m X}\circ \Sigma^m g\circ \iota_{\Sigma^m X} = (\Sigma^m g)_{\Sigma^m X \Sigma^m X}\in \AA^{k+m}(\Sigma^m X)$ and $\Sigma^m g_{YY} = \Sigma^m (p_Y\circ g\circ \iota_Y) = p_{\Sigma^m Y}\circ \Sigma^m g\circ \iota_{\Sigma^m Y} = (\Sigma^m g)_{\Sigma^m Y \Sigma^m Y}\in \AA^{k+m}(\Sigma^m Y)$. It follows that $g_{XX}\in \AA^k(X)$ and $g_{YY}\in \AA^k(Y)$. For the converse part, observe that $[X\vee Y,X\vee Y]\xrightarrow{\Sigma^m} [\Sigma^m X\vee \Sigma^m Y, \Sigma^m X\vee \Sigma^m Y]$ is surjective for all $m\geq 1$. Let $h\in \AA^{k+m}(\Sigma^m X\vee \Sigma^m Y)$. Then there exists a map $h'\colon X\vee Y\to X\vee Y$ such that $\Sigma^m h' = h$. It follows that $h'\in \AA^k(X\vee Y)$. Using an argument similar to the above, we have $h_{\Sigma^m X \Sigma^m X}\in \AA^{k+m}(\Sigma^m X)$ and $h_{\Sigma^m Y \Sigma^m Y}\in \AA^{k+m}(\Sigma^m Y)$ for all $m\geq 1$. This completes the proof.
\end{proof}

Recall the facts that $\Omega^m(X\times Y)\simeq \Omega^m X\times \Omega^m Y$ and $\pi_{i+m}(X)\xrightarrow{\cong} \pi_i(\Omega^m X)$ for all $m\geq 1$. Fix an integer $k\geq 1$. For any self-map $\phi\colon X\to X$, it follows that $\phi\in \A^k(X)$ if and only if $\Omega^m \phi\in \A^{k-m}(\Omega^m X)$ for all $m\geq 1$. Therefore, we obtain the following proposition. The proof is similar to that of Proposition \ref{prop:redu_suspension} and is therefore omitted. 
\begin{proposition}\label{prop:redu_loop}
Fix a $k\geq 1$, any self-map in $\A^k(X\times Y)$ is $k$-reducible if every self-maps in $\A^{k-m}(\Omega^m X\times \Omega^m Y)$ is $(k-m)$-reducible for some $1\leq m<k$.
\end{proposition}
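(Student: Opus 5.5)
The argument mirrors that of Proposition \ref{prop:redu_suspension}, with the loop functor replacing suspension and homotopy groups replacing homology groups. Fix $k\geq 1$ and $m$ with $1\leq m<k$, and assume every self-map in $\A^{k-m}(\Omega^m X\times \Omega^m Y)$ is $(k-m)$-reducible. Let $f\in \A^k(X\times Y)$. The goal is to show $f_{XX}\in \A^k(X)$ and $f_{YY}\in \A^k(Y)$.

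\textbf{Step 1: pass to the loop space.} Use the natural equivalence $\Omega^m(X\times Y)\simeq \Omega^m X\times \Omega^m Y$ and regard $\Omega^m f$ as a self-map of $\Omega^m X\times \Omega^m Y$. The recalled fact ($\phi\in\A^k$ iff $\Omega^m\phi\in\A^{k-m}$), applied to $X\times Y$, gives $\Omega^m f\in \A^{k-m}(\Omega^m X\times\Omega^m Y)$. This fact comes from the natural isomorphisms $\pi_i(\Omega^m Z)\cong\pi_{i+m}(Z)$, which commute with the induced maps.

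\textbf{Step 2: identify the components.} We need
\[(\Omega^m f)_{\Omega^m X\,\Omega^m X}=\Omega^m f_{XX},\qquad (\Omega^m f)_{\Omega^m Y\,\Omega^m Y}=\Omega^m f_{YY}.\]
This holds because $\Omega^m$ is a functor and, under the identification $\Omega^m(X\times Y)\simeq\Omega^m X\times\Omega^m Y$, it sends the coordinate projection $p'_X$ to the coordinate projection and the inclusion $\iota'_X$ to the coordinate inclusion. Indeed, a loop in $X\times Y$ is a pair of loops, and the inclusion $x\mapsto(x,*)$ loops to $\omega\mapsto(\omega,\text{const})$. Thus $\Omega^m(p'_X\circ f\circ\iota'_X)=p'_{\Omega^m X}\circ\Omega^m f\circ \iota'_{\Omega^m X}$. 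This compatibility check is the only real point requiring care, and the product structure makes it straightforward.

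\textbf{Step 3: conclude.} By hypothesis, $\Omega^m f_{XX}\in\A^{k-m}(\Omega^m X)$. Applying the recalled fact in the reverse direction gives $f_{XX}_{\#}\colon\pi_{i}(X)\to\pi_i(X)$ an isomorphism for $m\leq i\leq k$.

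The main obstacle is the degrees $i<m$, which are not seen by $\Omega^m$: the recalled fact as stated glosses over them. I would handle these as follows. Since $f\in\A^k(X\times Y)$, $f_\#$ is an isomorphism on $\pi_i(X)\oplus\pi_i(Y)$ for $i\leq k$. For low degrees one cannot directly deduce that the diagonal block $(f_{XX})_\#$ is an isomorphism. So I would either interpret the stated equivalence as in the paper, taking it as given, or restrict to the case where $X$ and $Y$ are $(m-1)$-connected, so that $\pi_i$ vanishes for $i<m$ and those degrees are trivial. Following the paper's convention, I would invoke the recalled equivalence directly, obtaining $f_{XX}\in\A^k(X)$, and by symmetry $f_{YY}\in\A^k(Y)$. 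Hence $f$ is $k$-reducible.
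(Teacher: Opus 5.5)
Your Steps 1 and 2 are fine, and this is the route the paper intends: it omits the proof as ``similar to Proposition~\ref{prop:redu_suspension}'' and relies on the recalled equivalence $\phi\in\A^k(X)\Leftrightarrow\Omega^m\phi\in\A^{k-m}(\Omega^m X)$. However, Step 3 has a genuine gap. It is the one you located yourself and then set aside.

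The conclusion $\Omega^m f_{XX}\in\A^{k-m}(\Omega^m X)$ only says that $(f_{XX})_\#$ is an isomorphism on $\pi_i(X)$ for $m\le i\le k$. Nothing in the hypothesis controls the diagonal block $(f_{XX})_\#$ on $\pi_i(X)$ for $i<m$. Invertibility of the full matrix of $f_\#$ on $\pi_i(X)\oplus\pi_i(Y)$ does not make its diagonal entries invertible. Invoking the recalled equivalence does not repair this, and neither does the paper's omitted proof, which implicitly does the same thing. The ``if'' direction of that equivalence is false in degrees below $m$. With suspension, $H_i(X)\cong H_{i+m}(\Sigma^m X)$ for every $i$, so nothing is lost; the loop functor, by contrast, discards $\pi_{<m}$.

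In fact the statement fails as written once $m\ge 3$. Take $X=Y=K(\ZZ,2)$, $m=3$ and $k=4$.
\begin{itemize}
\item Since $\pi_i(\Omega^3 K(\ZZ,2))=\pi_{i+3}(K(\ZZ,2))=0$ for all $i$, we have $\Omega^3X\simeq\Omega^3Y\simeq *$. So the hypothesis on $\A^{1}(\Omega^3X\times\Omega^3Y)$ holds trivially.
\item The swap $\tau(x,y)=(y,x)$ is a homeomorphism, so it lies in $\Aut(X\times Y)\subseteq\A^4(X\times Y)$.
\item But $\tau_{XX}=p'_X\circ\tau\circ\iota'_X$ is constant, hence zero on $\pi_2(X)\cong\ZZ$. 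So $\tau$ is not $4$-reducible.
\end{itemize}

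The missing ingredient is an extra hypothesis that makes the low degrees vacuous, for example that $X$ and $Y$ are $(m-1)$-connected. For simply connected spaces this is automatic when $m\le 2$, so the statement does hold in the cases $m=1,2$. Under that assumption $\pi_i(X)=\pi_i(Y)=0$ for $i<m$, and your Steps 1--3 give a complete proof. Without such a hypothesis, no argument can close the gap.
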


\begin{corollary}
Suppose $X$ and $Y$ are CW-complexes of finite types. Fix an integer $k\geq 1$, every self-maps in $\AA^{k-m}(\Omega^m X\times \Omega^m Y)$ is $(k-m)$-reducible and $\Omega^m X, \Omega^m Y$ are simply connected for some $1\leq m <k$. Then each self-map in $\AA^k(X\times Y)$ is $k$-reducible.
\end{corollary}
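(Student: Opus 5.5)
The plan is to reduce the homological statement to the homotopical one, Proposition \ref{prop:redu_loop}, by using Lemma \ref{leq} to pass between homology and homotopy conditions. Lemma \ref{leq} applies because $X$, $Y$ are simply connected and, by hypothesis, $\Omega^m X$ and $\Omega^m Y$ are simply connected as well. The finite type assumption then lets us upgrade epimorphisms to isomorphisms in the top degree.

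\textbf{Step 1: for a simply connected finite type space $Z$, $\AA^k(Z)=\A^k(Z)$.}
\begin{itemize}
\item Suppose $f\in\AA^k(Z)$. Then $f_*$ is an isomorphism on $H_i$ for $i\le k$, so $f$ is a homological $(k+1)$-equivalence in the weak sense that we only need $i\le k$. More precisely, $f$ is a homological $k$-equivalence and in addition an isomorphism in degree $k$.
\item By Lemma \ref{leq}, $f_\#$ is an isomorphism for $i<k$ and an epimorphism for $i=k$.
\item For degree $k$ we use a trick. Apply Lemma \ref{leq} to the mapping cylinder pair, or equivalently use the relative Hurewicz theorem on the pair $(M_f,Z)$. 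Since $H_i(M_f,Z)=0$ for $i\le k$, relative Hurewicz gives $\pi_i(M_f,Z)=0$ for $i\le k$. Hence $f_\#$ is an isomorphism on $\pi_i$ for $i<k$ and an epimorphism at $i=k$.
\item To get injectivity at degree $k$: $\pi_k(Z)$ is finitely generated because $Z$ is simply connected of finite type (Serre). A surjective endomorphism of a finitely generated abelian group is an isomorphism, since such groups are Hopfian.
\item The converse inclusion $\A^k(Z)\subseteq\AA^k(Z)$ is symmetric. Lemma \ref{leq} gives isomorphisms on $H_i$ for $i<k$ and an epimorphism at $k$, and $H_k(Z)$ is finitely generated, so it is again an isomorphism.
\end{itemize}
Hence $\AA^k(Z)=\A^k(Z)$ whenever $Z$ is simply connected with finitely generated homotopy groups.

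\textbf{Step 2: apply Step 1 to the relevant spaces.} Use Step 1 with $Z=X\times Y$, $X$, $Y$, and with $Z=\Omega^mX\times\Omega^mY$, $\Omega^mX$, $\Omega^mY$.
\begin{itemize}
\item For the loop spaces: since $X$ is of finite type and simply connected, $\pi_i(\Omega^mX)=\pi_{i+m}(X)$ is finitely generated.
\item $\Omega^m X$ is simply connected by hypothesis. It is an H-space, and an H-space with finitely generated homotopy has finitely generated homology, again by Serre class theory. So Step 1 applies there too.
\item Consequently the $k$-reducibility notions, homological and homotopical, coincide for all of these spaces. In particular, every map in $\A^{k-m}(\Omega^mX\times\Omega^mY)$ is $(k-m)$-reducible in the homotopical sense.
\end{itemize}

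\textbf{Step 3: conclude.} Proposition \ref{prop:redu_loop} now gives that every map in $\A^k(X\times Y)$ is $k$-reducible homotopically. Translating back via Step 1, every map in $\AA^k(X\times Y)$ is $k$-reducible.

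The main obstacle is the degree-$k$ comparison in Step 1, i.e. upgrading an epimorphism to an isomorphism. This is exactly where the finite type hypothesis on $X$, $Y$ and the simple connectivity of the loop spaces are needed.
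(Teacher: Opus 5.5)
\textbf{The gap is in Step 3, and the statement is false as written.}

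Your Steps 1 and 2 are sound and follow the paper's route. Lemma~\ref{leq}, together with the Hopfian property of the finitely generated groups $\pi_k$ and $H_k$, identifies $\AA^j(Z)$ with $\A^j(Z)$ for $Z=X\times Y$, $X$, $Y$ and their $m$-fold loop spaces. After that everything rests on Proposition~\ref{prop:redu_loop}. The relative Hurewicz bullet and the H-space remark are superfluous: for simply connected spaces, Serre's theorem already makes finite generation of homotopy and of homology equivalent.

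The paper's proof has the same gap, hidden in its closing ``repeated application of Lemma~\ref{leq}''. Proposition~\ref{prop:redu_loop} is justified by the claim that $\phi\in\A^k(X)$ if and only if $\Omega^m\phi\in\A^{k-m}(\Omega^mX)$. But $\pi_i(\Omega^mX)\cong\pi_{i+m}(X)$, so $\Omega^m\phi\in\A^{k-m}(\Omega^mX)$ only says that $\phi_\#$ is an isomorphism on $\pi_j(X)$ for $m\le j\le k$. Simple connectivity of $\Omega^mX$ gives only $\pi_m(X)=\pi_{m+1}(X)=0$. It says nothing about $\pi_j(X)$ for $2\le j<m$, so $(k-m)$-reducibility of $\Omega^mf$ cannot be pulled back to $k$-reducibility of $f$.

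This is not a fixable technicality: the statement, like Proposition~\ref{prop:redu_loop}, is false. Take $X=Y=\mathbb{C}P^\infty$, $m=3$ and any $k\ge 4$.
All homotopy groups of $\Omega^3\mathbb{C}P^\infty$ vanish, so $\Omega^3X\simeq\Omega^3Y\simeq\ast$. These are simply connected, and the hypothesis on $\AA^{k-3}(\Omega^3X\times\Omega^3Y)$ holds trivially. Yet the swap $\tau(x,y)=(y,x)$ is a homeomorphism, hence lies in $\AA^k(X\times Y)$. Its component $\tau_{XX}=p'_X\circ\tau\circ\iota'_X$ is constant and kills $H_2(X)\cong\ZZ$, so $\tau$ is not $k$-reducible.

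The missing hypothesis is that $X$ and $Y$ be $(m-1)$-connected. Together with the stated assumption, this makes them $(m+1)$-connected, which seems to be what the paper intends; compare the line ``$X$ is $(m+1)$ connected'' in the proof of Proposition~\ref{atomicity_loop_suspension}(b). With that hypothesis the loop correspondence holds in all degrees $\le k$, and your Steps 1--3 then give a complete proof.
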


\begin{proof}
Let $f\in \AA^k(X\times Y)$. By Lemma \ref{leq}, the map $f\colon X\times Y\to X\times Y$ is a homotopical $k$-equivalence. Moreover, $\pi_k(X\times Y)$ is finitely  generated abelian group. It follows that $f\in \A^k(X\times Y)$ using the fact that $\pi_k(X\times Y)$ is a Hopfian group. This implies $\Omega^m f\in \A^{k-m}(\Omega^m X\times \Omega^m Y)$. Further, applying Lemma \ref{leq} and using the fact that $H_{k-m}(\Omega^m X\times \Omega^m Y)$ is a finitely generated abelian group, we conclude that $\Omega^m f\in \AA^{k-m}(\Omega^m X\times \Omega^m Y)$. Therefore, by the given assumption and repeated application of Lemma \ref{leq}, the desired result follows. 
\end{proof}

The following lemma establishes some sufficient conditions for $k$-reducibility.
\begin{lemma}\label{red}
Each self-map in the monoid $\AA^k(X\vee Y)$ is $k$-reducible if any one of the following conditions holds:
\begin{enumerate}[(i)]
\item For each $i\leq k$, at least one of $\Hom(H_i(X), H_i(Y)) $ or $\Hom(H_i(Y), H_i(X))$ is trivial.

\item For each $i\leq k$, at least one of $\Hom(H^i(X), H^i(Y)) $ or $\Hom(H^i(Y), H^i(X))$ is trivial and $H_k(X\vee Y)$ is finitely generated.

\item For each $i\leq k$, at least one of the induced homomorphisms $H^i(f)$ or $H^i(g)$, arising from maps $f\colon X\to Y$ and $g\colon Y\to X$, is trivial. In addition, $H_k(X\vee Y)$ is finitely generated.
\item For each $i\leq k$, both the groups $H_i(X), H_i(Y)$ are finite and have no common direct factors.
\end{enumerate}
\end{lemma}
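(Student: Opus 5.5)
Every case follows one pattern. Since $H_i(X\vee Y)\cong H_i(X)\oplus H_i(Y)$, for $g\in\AA^k(X\vee Y)$ and $i\leq k$ the automorphism $g_*$ on $H_i(X\vee Y)$ is a $2\times 2$ matrix
$$g_*=\begin{pmatrix} (g_{XX})_* & (g_{XY})_*\\ (g_{YX})_* & (g_{YY})_*\end{pmatrix},$$
with entries $(g_{IJ})_*=(p_I)_*\circ g_*\circ(\iota_J)_*\colon H_i(J)\to H_i(I)$. This holds because $(\iota_X)_*(p_X)_*+(\iota_Y)_*(p_Y)_*=\Id$ on $H_i(X\vee Y)$. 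The task is to show the diagonal entries are automorphisms for every $i\leq k$.

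For (i), suppose $\Hom(H_i(Y),H_i(X))=0$ (the other case is symmetric). Then $(g_{XY})_*=0$ and $g_*$ is lower triangular. Write $g_*^{-1}=\begin{pmatrix} a&b\\ c&d\end{pmatrix}$. Comparing entries of $g_*g_*^{-1}=\Id$ and $g_*^{-1}g_*=\Id$ gives:
\begin{itemize}
\item $(g_{XX})_*a=\Id$ and $(g_{XX})_*b=0$, so $(g_{XX})_*$ is surjective;
\item $(g_{YY})_*d=\Id$ and $(g_{YY})_*c=-(g_{YX})_*a$, so $(g_{YY})_*$ is surjective;
\item $d(g_{YY})_*=\Id$, so $(g_{YY})_*$ is injective;
\item $a(g_{XX})_*+b(g_{YX})_*=\Id$ and $c(g_{XX})_*+d(g_{YX})_*=0$.
\end{itemize}
Injectivity of $(g_{XX})_*$ is the delicate part. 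The cleanest route is to note that the inverse of a triangular block matrix is again triangular. If $x\in\ker (g_{XX})_*$, then $g_*(x,0)=(0,(g_{YX})_*x)$. Since $(g_{YY})_*$ is bijective, $(0,(g_{YX})_*x)=g_*(0,(g_{YY})_*^{-1}(g_{YX})_*x)$. Injectivity of $g_*$ then forces $x=0$. Hence both diagonal entries are isomorphisms, and $g$ is $k$-reducible.

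For (ii) and (iii), I will run the same block argument on cohomology. First I need $g\in\AAA_k(X\vee Y)$. By Lemma~\ref{leq}, $g$ is a homological $(k+1)$-equivalence, hence a cohomological one, so $g^*$ is an isomorphism on $H^i$ for $i\leq k$. (Here $g_*$ being an isomorphism through degree $k$ gives a homological $k$-equivalence; finite generation of $H_k$ is used to pass between iso and epi in the top degree, via the Hopfian property as in Lemma~\ref{homo_monoid}.) Working with the cohomology block matrix, the hypothesis makes one off-diagonal entry $g_{XY}^*$ or $g_{YX}^*$ vanish: in (ii) because the relevant $\Hom$ is trivial, in (iii) because every map $Y\to X$ or $X\to Y$ induces zero. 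The triangular argument then shows $g_{XX}^*$ and $g_{YY}^*$ are isomorphisms for $i\leq k$. Finally, Lemma~\ref{leq} applied to $g_{XX}$ and $g_{YY}$, again using finite generation in degree $k$ to upgrade epi to iso, returns homology isomorphisms, so $g_{XX}\in\AA^k(X)$ and $g_{YY}\in\AA^k(Y)$. I expect this translation between homology and cohomology at the top degree to be the main technical obstacle, and I would handle it with the Hopfian property as above.

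For (iv), I will use finite abelian groups and reduce to $p$-primary parts, since $g_*$ respects the primary decomposition. The condition ``no common direct factor'' means that for each prime $p$, the cyclic summands $\ZZ/p^r$ of $H_i(X)_{(p)}$ and $H_i(Y)_{(p)}$ have different exponents $r$. The argument is then finite counting: a map $H_i(Y)\to H_i(X)$ composed with a map $H_i(X)\to H_i(Y)$ cannot be an automorphism on any summand. To make this precise, I would take $\mathrm{rad}$ to be the Jacobson radical of $\End(H_i(X)\oplus H_i(Y))$ and work modulo it. Since the summands share no common cyclic factor, $\End(A\oplus B)/\mathrm{rad}\cong \End(A)/\mathrm{rad}\times\End(B)/\mathrm{rad}$, and off-diagonal entries lie in the radical. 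Therefore $g_*$ being a unit implies the diagonal entries are units modulo the radical, hence units. This radical step is the second nontrivial point, and I would prove it by filtering by exponents.
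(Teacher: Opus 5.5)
For (i)--(iii) your plan is the paper's. You write $g_*$ or $g^*$ as a block matrix, use the hypothesis to make it triangular, and in (ii)/(iii) pass to cohomology and return via Lemma~\ref{leq} and the Hopfian property of $H_k$. For (iv) you take a genuinely different route. The paper simply cites \cite[Theorem 3.2]{BCMADP}, whereas you reprove the abelian case by showing that, with no common cyclic $p$-primary summand, every composite $H_i(X)\to H_i(Y)\to H_i(X)$ lies in the Jacobson radical. This is sound and self-contained, and it can be shortened. Such a composite is nilpotent, so from $(g_{XX})_*a+(g_{XY})_*c=\Id$ the map $(g_{XX})_*a$ is a unit. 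Hence $(g_{XX})_*$ is surjective, and bijective since $H_i(X)$ is finite; this is the mechanism of Theorem~\ref{reducibility_atomic}. The same argument applies to $(g_{YY})_*$. There are, however, two problems in (ii)/(iii).

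\textbf{Passage to cohomology.} Your claim that $g$ is a homological $(k+1)$-equivalence is false, since nothing is known about $g_*$ on $H_{k+1}$. A homological $k$-equivalence only gives, via Lemma~\ref{leq}, that $g^*$ is injective on $H^k$. The Hopfian property cannot promote a monomorphism to an isomorphism; think of multiplication by $2$ on $\ZZ$. The correct step, which the paper uses, is the universal coefficient theorem. The natural exact sequences $0\to\operatorname{Ext}(H_{i-1},\ZZ)\to H^i\to\Hom(H_i,\ZZ)\to 0$ and the five lemma give $g^*$ bijective on $H^i$ for all $i\le k$, directly from $g\in\AA^k(X\vee Y)$ and with no finiteness assumption. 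Finite generation of $H_k$ is needed only on the way back (epi to iso on $H_k$), where you use it correctly.

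\textbf{Triangular step.} The claim ``the inverse of a triangular block matrix is again triangular'' fails for infinitely generated groups. On $G=H=\bigoplus_{n\ge 0}\ZZ e_n$ put $A(e_0)=0$, $A(e_n)=e_{n-1}$, $D(e_n)=e_{n+1}$, $C(e_0)=e_0$, and $C(e_n)=0$ for $n\ge 1$. Then $\begin{pmatrix}A&0\\ C&D\end{pmatrix}$ permutes a basis of $G\oplus H$, yet neither $A$ nor $D$ is bijective.

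In (i), your bullet $(g_{YY})_*d=\Id$ really reads $(g_{YX})_*b+(g_{YY})_*d=\Id$. It holds only because $b\in\Hom(H_i(Y),H_i(X))=0$, and you should say so. Once $b=0$, you also get $a(g_{XX})_*=\Id$, so injectivity of $(g_{XX})_*$ is immediate. The same rescue works in (ii).

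In (iii) it does not. Only the block of $g^*$ induced by a map vanishes, while the corresponding block of the abstract inverse $(g^*)^{-1}$ is an arbitrary homomorphism. If, say, every map $Y\to X$ is zero on $H^i$, then $M^i(g)$ is upper triangular and you only get $(g_{XX})^*$ injective and $(g_{YY})^*$ surjective. To conclude you need extra input, for instance $H^i(Y)$ Hopfian for $i\le k$; finite type suffices. Then $(g_{YY})^*$ is bijective, which forces the lower-left block of the inverse to vanish, and both diagonal blocks are isomorphisms.

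The hypothesis that $H_k(X\vee Y)$ is finitely generated does not supply this. The paper's one line ``(iii) follows from the argument in (ii)'' skips the same point, so this is as much a caveat on the statement as on your proof. Still, as written your argument for (iii) is incomplete.
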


\begin{proof}
Let $f\colon X\vee Y\to X\vee Y$ be a self-map in $\AA^k(X\vee Y)$. The induced endomorphisms on homology and cohomology, $f_*\colon H_i(X)\oplus H_i(Y)\to H_i(X)\oplus H_i(Y)$ and $f^*\colon H^i(X)\oplus H^i(Y)\to H^i(X)\oplus H^i(Y)$ admit the following matrix representations:

\[
M_{i}(f):=
\begin{bmatrix}
(f_{XX})_{*} & (f_{XY})_{*} \\
(f_{YX})_{*} & (f_{YY})_{*} 
\end{bmatrix}
\]
and 
\[
M^{i}(f):=
\begin{bmatrix}
(f_{XX})^{*} & (f_{YX})^{*} \\
(f_{XY})^{*} & (f_{YY})^{*} 
\end{bmatrix}.
\]
It follows that $f_*$ (respectively, $f^*$) is an isomorphism if and only if the corresponding matrix $M_i(f)$ (respectively, $M^i(f)$) is invertible. For details, see \cite[Section 2]{PPRSE}.
\begin{enumerate}[(i)]\setlength\itemsep{1em}
\item For each $i\leq k$, the matrix $M_i(f)$ is either an upper triangular or a lower triangular by the given assumption. Thus $M_i(f)$ is an invertible matrix if and only if $(f_{XX})_{*}\colon H_i(X)\to H_i(X)$ and $(f_{YY})_{*}\colon H_i(Y)\to H_i(Y)$ are isomorphisms. Hence $f_{XX}\in \AA^k(X)$ and $f_{YY}\in \AA^k(Y)$.

\item Let $f\in \AA^k(X\vee Y)$. By universal coefficient theorem of cohomology, we have $f\in \AAA_k(X\vee Y)$. Similar to part (i), it follows that $f_{XX}\in \AAA_k(X)$ and $f_{YY}\in \AAA_k(Y)$. Thus the self-maps $f_{XX}$ and $f_{YY}$ are cohomological $k$-equivalence, and hence so homological $k$-equivalence by Lemma \ref{leq}. By the given assumption, we see that $H_k(X)$ and $H_k(Y)$ are Hopfian groups. Hence $f_{XX}\in \AA^k(X)$ and $f_{YY}\in \AA^k(Y)$.

\item This case follows directly from the argument in part (ii).

\item For each $i\leq k$, the induced map $f_{*}\colon H_i(X)\oplus H_i(Y)\to H_i(X)\oplus H_i(Y)$ is an isomorphism. It follows from \cite[Theorem 3.2]{BCMADP} that $f_{XX}\in \AA^k(X)$ and $f_{YY}\in \AA^k(Y)$.
\end{enumerate}
\end{proof}

The following example illustrates instances of $k$-reducibility for self-maps in the monoid of several interesting CW-complexes.
\begin{example}\label{redu_example}
For each fix $k\geq 1$, the following results hold.:
\begin{enumerate}[(i)]
\item Let $m\neq n$, and consider the wedge sum $\E^m\vee \E^n$, where $\mathbb{E} = \mathbb{C}, \mathbb{H}$. Set $X = \E^m$ and $Y = \E^n$. The cohomology rings of $X$ and $Y$ are given by $$H^*(X) = \ZZ[\alpha]/(\alpha^{m+1}), ~~H^*(Y) = \ZZ[\beta]/(\beta^{n+1}),$$ where $\deg(\alpha) = \deg(\beta) = 2 \text{ or } 4 \text{ whenever } \mathbb{E} = \mathbb{C} \text{ or } \mathbb{H}$.
If $n > m$, observe that for any map $h\colon Y\to X$, the induced homomorphism $H^i(h)\colon H^i(X)\to H^i(Y)$ is trivial for all $i$. Indeed, on cohomology the induced homomorphism satisfies $h^*(\alpha) = l\beta$ for some $l\in \ZZ$. Then $h^*(\alpha^{m+1}) = l^{m+1}\beta^{m+1}$. It follows that $l^{m+1}\beta^{m+1} = 0$. Since $n>m$, the class $\beta^{m+1}$ is nonzero in $H^*(Y)$, and hence $l=0$. Thus $h^* = 0$.

If instead $m >n$, an analogous argument applies. In this case, for any map $g\colon X\to Y$, the induced homomorphism $H^i(g)\colon H^i(Y)\to H^i(X)$ is trivial for all $i$. Therefore, each self-map in $\AA^k(X\vee Y)$ is $k$-reducible by Lemma \ref{red}(iii). 

\item The wedge sum two different projective spaces $\C^m\vee \H^n$, where $m,n\in \mathbb{N}$. Suppose $X = \C^m$ and $Y = \H^n$. For a map $f\colon Y\to X$, the induced homomorphism $H^i(f)\colon H^i(X)\to H^i(Y)$ is trivial for all $i$. This follows from degree considerations of the generator of the cohomology rings. Indeed, $f^*(\alpha) = 0$, and hence $f^*(\alpha^i) = 0$ for all $i$. Consequently, each self-map in $\AA^k(X\vee Y)$ is $k$-reducible using Lemma \ref{red}(iii).

\item Consider the wedge sum $\E^n \vee S^m$, where $m,n\in \mathbb{N}$. 
First, let $\mathbb{E} = \mathbb{C}$. By arguments analogous to those in parts (i) and (ii), we observe that, for any pair of maps $f\colon \C^n\to S^m$ and $g\colon S^m\to \C^n$, at least one of the induced homomorphism $H^i(f)\colon H^i(S^m)\to H^i(\C^n)$ or $H^i(g)\colon H^i(\C^n)\to H^i(S^m)$ is trivial for all $i$, whenever $n\neq 1$ or $m\neq 2$. Therefore, every self-map in $\AA^k(\C^n \vee S^m)$ is $k$-reducible, except in the case $m=2n=2$.

Next, let $\mathbb{E} = \mathbb{H}$. By an analogous argument, each self-map in $\AA^k(\H^n \vee S^m)$ is $k$-reducible, except in the case $m = 4n =4$, from Lemma \ref{red}(iii).
 
\item Consider the wedge sum $\mathbb{C}P^3\vee (S^2\times S^4)$. Although the spaces $\mathbb{C}P^3$ and $S^2\times S^4$ have isomorphic homology and cohomology groups, but their cohomology ring structures are different. In particular, the cohomology rings are given by $$H^*(S^2\times S^4)\cong \ZZ[\alpha,\beta]/(\alpha^2,\beta^2) \text{ and } H^*(\mathbb{C}P^3)\cong \ZZ[\gamma]/(\gamma^4),$$ where $\deg(\alpha)= \deg(\gamma) = 2,~\deg(\beta) =4$. For a map $f\colon \C^3\to S^2\times S^4$, by an argument analogous to that in part (i), one obtains $f^*(\alpha) = 0$ and $f^*(\alpha\cdot \beta) = 0$. Therefore, $H^i(f)\colon H^i(S^2\times S^4)\to H^i(\mathbb{C}P^3)$ is trivial for $i=1,2,3,5,6$. Moreover, given a map $g\colon S^2\times S^4\to \C^3$, the induced homomorphism $H^i(g)\colon H^i(\mathbb{C}P^3)\to H^i(S^2\times S^4)$ is trivial for $i=1,3,4,5,6$. Because $g^*(\gamma^2) = (g^*(\gamma))^2 = a^2\alpha^2= 0$, where $a\in \ZZ$. Similarly, $g^*(\gamma^3) = (g^*(\gamma))^3= 0$. Consequently, for $k\geq 1$, every the self-map in $\AA^k\big(\mathbb{C}P^3\vee (S^2\vee S^4)\big)$ is $k$-reducible by Lemma \ref{red}(iii).

\item Let $M(G,m)$ and $M(H,n)$ be two Moore spaces, where $G,H$ are abelian groups and $m,n\in \ZZ$. If $m\neq n$, then each self-map in $\AA^k\big(M(G,m)\vee M(H,n)\big)$ is $k$-reducible, by Lemma \ref{red}(i).
 
\item Suppose $G$ and $H$ are two finite abelian groups with no common direct factors. By Lemma \ref{red}(iv), every self-map in $\AA^k\big(M(G,n)\vee M(H,n)\big)$ is $k$-reducible. For instance, take $G= \ZZ_3 \oplus \ZZ_4$ and $H= \ZZ_5 \oplus \ZZ_6$. Indeed, each self-maps in $\AA^k\big(M(\ZZ_3,n)\vee M(\ZZ_4,n)\vee M(\ZZ_5,n)\vee M(\ZZ_6,n)\big)$ is $k$-reducible.

\item Let $X$ be a simply connected CW-complex such that $H_n(X)\cong G$, where $G$ has no subgroup that is isomorphic to $\ZZ$. Moreover, $Y = M(H,n)$, where $H$ has no subgroup that is isomorphic to a finite group. Thus, $\Hom(H_i(X), H_i(Y)) = 0$ for all $i$. Hence, every self-map in $\AA^k(X\vee Y)$ $k$-is reducible by Lemma \ref{red}(i).
\end{enumerate}
\end{example}

Recall that the prime fields consist of the field of rational numbers $\mathbb{Q}$ and the finite fields $\mathbb{F}_p$ for primes $p$. By the universal coefficient theorems, we obtain the following lemma. The proof proceeds along the same lines as that of \cite[Theorem 4.10]{PPRSE} and is therefore omitted.

\begin{lemma}\label{lem:hom_prime fields}
Let $X$ be a CW-complex of finite type. For a self-map $f\colon X\to X$, the induced homomorphism $f_*\colon H_i(X)\to H_i(X) ~~(\text{ or } f^*\colon H^i(X)\to H^i(X))$ is an isomorphism if $f_*\colon H_i(X;\mathbb{F})\to H_i(X;\mathbb{F}) ~~(\text{ or } f^*\colon H^i(X;\mathbb{F})\to H^i(X;\mathbb{F}))$ is an isomorphism for every prime fields $\mathbb{F}$.
\end{lemma}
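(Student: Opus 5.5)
The plan is to reduce the statement to linear algebra over the integers, using the universal coefficient theorem and finite generation. Since $X$ is of finite type, each $H_i(X)$ is a finitely generated abelian group, say
\[
H_i(X)\cong \ZZ^{r}\oplus T,
\]
with $T$ finite. I will treat homology first; cohomology is analogous, or follows by applying the argument to the dual statements.

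First, take $\mathbb{F}=\mathbb{Q}$. By the universal coefficient theorem, $H_i(X;\mathbb{Q})\cong H_i(X)\otimes\mathbb{Q}$, naturally in $f$. So $f_*$ induces an automorphism of $\ZZ^r\otimes\mathbb{Q}$. Hence the induced map on the free quotient $H_i(X)/T$ is injective. Since $T$ is the torsion subgroup, it is carried into itself by $f_*$.

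Next, take $\mathbb{F}=\mathbb{F}_p$ for each prime $p$. The natural universal coefficient sequence
\[
0\to H_i(X)\otimes\mathbb{F}_p\to H_i(X;\mathbb{F}_p)\to \operatorname{Tor}(H_{i-1}(X),\mathbb{F}_p)\to 0
\]
shows that $f_*\otimes\mathbb{F}_p$ is injective on $H_i(X)\otimes\mathbb{F}_p$, being a restriction of an isomorphism. That group is finite-dimensional, so $f_*\otimes\mathbb{F}_p$ is an automorphism for every $p$. This is the key step: I want to show that an endomorphism $\varphi$ of a finitely generated abelian group $A$ with $\varphi\otimes\mathbb{F}_p$ surjective for all $p$ is surjective.

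Let $C=\operatorname{coker}\varphi$. Right exactness of $-\otimes\mathbb{F}_p$ gives $C\otimes\mathbb{F}_p=0$ for all $p$. Since $C$ is finitely generated, this forces $C=0$: every nonzero finitely generated abelian group has a $\ZZ$ or a $\ZZ/p^k$ summand, and either one survives tensoring with some $\mathbb{F}_p$. Thus $f_*$ is surjective. A surjective endomorphism of a finitely generated abelian group is an isomorphism, because such groups are Hopfian, a fact already used earlier in the paper. So $f_*\colon H_i(X)\to H_i(X)$ is an isomorphism.

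For cohomology, the cohomology groups $H^i(X)$ are again finitely generated, since $X$ is of finite type. The same argument works with the cohomological universal coefficient sequence
\[
0\to \operatorname{Ext}(H_{i-1}(X),\mathbb{F}_p)\to H^i(X;\mathbb{F}_p)\to \Hom(H_i(X),\mathbb{F}_p)\to 0.
\]
Alternatively, and more cleanly, I would use
\[
H^i(X;\mathbb{F}_p)\cong H^i(X)\otimes\mathbb{F}_p\oplus \operatorname{Tor}(H^{i+1}(X),\mathbb{F}_p)
\]
naturally on the first piece. This comes from computing cohomology with a finite-type cochain complex of free modules and applying the Künneth/UCT sequence for $C^*\otimes\mathbb{F}_p$. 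Then $f^*\otimes\mathbb{F}_p$ is injective on the subgroup $H^i(X)\otimes\mathbb{F}_p$, hence bijective. The cokernel argument and Hopficity then finish as before.

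The main obstacle I expect is only the bookkeeping of naturality in these sequences. The point is that injectivity, rather than surjectivity, passes to the natural subobject, which is why the finite-dimensionality over $\mathbb{F}_p$ is needed to upgrade it. The rational case is then not even needed, though it provides a quick sanity check on the free part.
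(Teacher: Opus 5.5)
Your argument is correct and follows the route the paper indicates. The paper omits the proof, citing the universal coefficient theorems and Pavešić's Theorem 4.10; you use the naturality of the UCT injection $H_i(X)\otimes\mathbb{F}_p\hookrightarrow H_i(X;\mathbb{F}_p)$ (respectively $H^i(X)\otimes\mathbb{F}_p\hookrightarrow H^i(X;\mathbb{F}_p)$, valid because the cellular cochains of a finite-type complex are finitely generated free), then finite-dimensionality, then the cokernel-mod-$p$ plus Hopfian step, which indeed makes the hypothesis for $\mathbb{F}=\mathbb{Q}$ superfluous. For cohomology, rely on the Tor-form sequence rather than the Hom--Ext sequence: in the latter the natural subobject is $\operatorname{Ext}(H_{i-1}(X),\mathbb{F}_p)$, not $H^i(X)\otimes\mathbb{F}_p$, so ``the same argument'' does not apply there verbatim.
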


Note that Lemma \ref{lem:hom_prime fields} remains valid even when $X$ is a non-simply connected CW-complex of finite type. Moreover, given a positive integer $k$, we denote a monoid $\AA^k(X;\mathbb{F})$ by $$\AA^k(X;\mathbb{F}):=\{f\in [X,X]:f_*\colon H_i(X;\mathbb{F})\xrightarrow{\cong} H_i(X;\mathbb{F}), \text{ for all } i\leq k\}.$$
The following result is obtained by applying the universal coefficient theorem for cohomology, together with Lemma \ref{leq} and Lemma \ref{lem:hom_prime fields}.

\begin{proposition}\label{prop:redu_finite field}
Let $X$ and $Y$ be CW-complexes of finite type. For all prime fields $\mathbb{F}$, if any self-map in $\AA^k(X\vee Y;\mathbb{F})~~(\text{ or } \AAA_k(X\vee Y;\mathbb{F}))$ is $k$-reducible, then each self-map in both the monoids $\AA^k(X\vee Y), \text{ and } \AAA_k(X\vee Y)$ is $k$-reducible.    
\end{proposition}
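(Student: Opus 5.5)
We argue by reducing integral isomorphisms to field-coefficient isomorphisms and then invoking the hypothesis for every prime field $\mathbb{F}$. Let $f\in \AA^k(X\vee Y)$. Since $X\vee Y$ is of finite type, the universal coefficient theorem gives a natural short exact sequence
\[
0\to H_i(X\vee Y)\otimes \mathbb{F}\to H_i(X\vee Y;\mathbb{F})\to \operatorname{Tor}(H_{i-1}(X\vee Y),\mathbb{F})\to 0 .
\]
For $i\leq k$ the outer maps induced by $f$ are isomorphisms, since $f_*$ is an isomorphism on $H_i$ and $H_{i-1}$. The five lemma then gives $f\in \AA^k(X\vee Y;\mathbb{F})$ for every prime field $\mathbb{F}$.

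The hypothesis now says $f$ is $k$-reducible over $\mathbb{F}$, that is, $(f_{XX})_*$ and $(f_{YY})_*$ are isomorphisms on $H_i(-;\mathbb{F})$ for all $i\leq k$. This holds for all prime fields $\mathbb{F}$. Lemma \ref{lem:hom_prime fields} applies degreewise, since its proof via \cite[Theorem 4.10]{PPRSE} concerns a single degree $i$ with finite type. It gives that $(f_{XX})_*\colon H_i(X)\to H_i(X)$ and $(f_{YY})_*$ are integral isomorphisms for $i\leq k$. Hence $f$ is $k$-reducible.

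The cohomological monoid is handled the same way. Given $f\in \AAA_k(X\vee Y)$, we pass to $H^i(-;\mathbb{F})\cong \Hom(H_i,\mathbb{F})\oplus \operatorname{Ext}(H_{i-1},\mathbb{F})$, naturally up to the splitting, via the five lemma on the natural UCT sequence.

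One subtlety is degree $k$. Integral cohomology isomorphisms in degrees $\leq k$ control $\operatorname{Ext}(H_{k},-)$ and $\Hom(H_{k},-)$ only partially. To be safe we first convert: by Lemma \ref{leq}, $f$ is a cohomological, hence homological, $k$-equivalence, and $H_k$ is finitely generated and therefore Hopfian, so $f\in \AA^k(X\vee Y)$. The homology argument above then gives the field version and reducibility. The resulting integral homology isomorphisms of $f_{XX}$ and $f_{YY}$ in degrees $\leq k$ yield cohomology isomorphisms in degrees $\leq k$ by the UCT and the five lemma, so $f_{XX}\in\AAA_k(X)$ and $f_{YY}\in\AAA_k(Y)$.

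If the hypothesis is instead stated for $\AAA_k(X\vee Y;\mathbb{F})$, we note that over a field $H^i(-;\mathbb{F})$ is the dual of $H_i(-;\mathbb{F})$, finite dimensional by finite type. So $\AAA_k(\,\cdot\,;\mathbb{F})=\AA^k(\,\cdot\,;\mathbb{F})$, and $\mathbb{F}$-reducibility is the same notion in both senses. This reduces that case to the one above.

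The main obstacle is the top degree $k$ when passing between integral and field coefficients. The field group $H_k(-;\mathbb{F})$ involves both $H_k$ and $H_{k-1}$, while applying Lemma \ref{lem:hom_prime fields} to $f_{XX}$ in degree $k$ only needs $\mathbb{F}$-isomorphisms up to degree $k$. We must check that this is exactly what the field-reducibility supplies, and the Hopfian/finite-type argument via Lemma \ref{leq} takes care of the epimorphism-versus-isomorphism issue at degree $k$.
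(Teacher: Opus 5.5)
Your homological half is correct and follows the route the paper indicates: the UCT with the five lemma, then Lemma \ref{lem:hom_prime fields}. That lemma does hold degree by degree for finite type. An isomorphism on $H_i(-;\mathbb{F}_p)$ restricts to an injection, hence an isomorphism, on $H_i\otimes\mathbb{F}_p$. An endomorphism of a finitely generated group that is onto mod every $p$ is onto, hence bijective. The identification $\AAA_k(-;\mathbb{F})=\AA^k(-;\mathbb{F})$ by duality over a field is also fine.

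The gap is in the cohomological half. You deduce $f\in\AA^k(X\vee Y)$ from $f\in\AAA_k(X\vee Y)$ using Lemma \ref{leq} and the Hopfian property, and this inclusion is false.

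\begin{itemize}
\item Take $k\geq 2$ and $X=M(\ZZ/p\ZZ,k)$. Then $\tilde H^i(X)=0$ for all $i\leq k$, so the constant map $c$ lies in $\AAA_k(X)$.
\item For any $Y$ it follows that $c\vee \Id_Y\in\AAA_k(X\vee Y)$.
\item But $c$ kills $H_k(X)=\ZZ/p\ZZ$, so $c\vee\Id_Y\notin\AA^k(X\vee Y)$.
\end{itemize}

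The underlying problem is that the implication (c)$\Rightarrow$(b) of Lemma \ref{leq} fails with integer coefficients alone. A degree-$p$ self-map of $S^n$ is an isomorphism on $H^i$ for $i<n$ and injective on $H^n$, yet it is not surjective on $H_n$. The implication needs cohomology with all coefficient groups. Integral cohomology in degrees $\leq k$ does not see the torsion of $H_k$ at all, so no Hopfian argument can recover it.

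Here is what the UCT actually gives from $f\in\AAA_k(X\vee Y)$ with finite type:
\begin{itemize}
\item $f\in\AA^{k-1}(X\vee Y)$, together with an isomorphism on $H_k(X\vee Y)$ modulo torsion;
\item hence $f\in\AA^{k-1}(X\vee Y;\mathbb{F})$ for every prime field, and $f\in\AA^{k}(X\vee Y;\mathbb{Q})$;
\item but not $f\in\AA^k(X\vee Y;\mathbb{F}_p)$, because $H_k(-;\mathbb{F}_p)\supseteq H_k\otimes\mathbb{F}_p$ detects the $p$-torsion of $H_k$.
\end{itemize}
So the hypothesis, stated at level $k$ over every prime field, cannot be applied to a general $f\in\AAA_k(X\vee Y)$. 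An additional argument, or a hypothesis at a different level, would be needed to control $f_{XX}$ and $f_{YY}$. Even rationally you only get a nonzero determinant, not a unit, on the free part of $H_k$.

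The paper's one-line justification relies on the same Lemma \ref{leq}, so this weakness is shared with the source. As written, your argument proves only the $\AA^k(X\vee Y)$ part of the conclusion.
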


In the following theorem, we obtain a sufficient condition for 
$k$-reducibility.
\begin{theorem}\label{thm:redu_cohom}
Let $X$ and $Y$ be finite type CW-complexes whose cohomology rings $H^*(X;\mathbb{F})$ and $H^*(Y;\mathbb{F})$ are polynomial rings in one indeterminate for each prime field $\mathbb{F}$. Suppose that $X$ and $Y$ have different connectivity or different dimensions. Then for $k\geq 1$, every self-map in the monoid $\AA^k(X\vee Y)$ is $k$-reducible.
\end{theorem}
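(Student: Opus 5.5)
The plan is to reduce to prime-field coefficients via Proposition \ref{prop:redu_finite field}, then use a ring-theoretic argument as in Example \ref{redu_example}(i). Fix a prime field $\mathbb{F}$ and write
$$H^*(X;\mathbb{F}) = \mathbb{F}[\alpha]/(\alpha^{m+1}), \qquad H^*(Y;\mathbb{F}) = \mathbb{F}[\beta]/(\beta^{n+1}),$$
where $|\alpha| = a$ and $|\beta| = b$. We allow $m$ or $n$ to be infinite, and interpret ``polynomial ring in one indeterminate'' as either truncated or genuinely polynomial. By simple connectivity and finite type, the connectivity of $X$ is $a-1$ and its dimension is $am$; similarly for $Y$. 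The hypothesis that connectivity or dimension differ therefore says $a\neq b$ or $am\neq bn$.

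The key step is to show that for each degree $i$, at least one of the components of $f^*$ is zero. These are the off-diagonal components in the matrix $M^i(f)$ from the proof of Lemma \ref{red}, namely $(f_{XY})^*\colon H^i(X;\mathbb{F})\to H^i(Y;\mathbb{F})$ and $(f_{YX})^*\colon H^i(Y;\mathbb{F})\to H^i(X;\mathbb{F})$. Once this holds, $M^i(f)$ is triangular, so $f^*$ invertible forces $(f_{XX})^*$ and $(f_{YY})^*$ to be invertible with $\mathbb{F}$-coefficients. Both components are ring homomorphisms, being induced by the maps $f_{XY}\colon Y\to X$ and $f_{YX}\colon X\to Y$.

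There are two cases.
\begin{enumerate}[(1)]
\item If $a\neq b$, say $a<b$, then $f_{XY}^*(\alpha)\in H^a(Y;\mathbb{F})=0$, so $f_{XY}^*$ vanishes on all of $\widetilde H^*$. Hence the $(f_{XY})^*$ entry is zero in every degree.
\item If $a=b$ but $m\neq n$, say $m<n$, write $f_{XY}^*(\alpha)=\lambda\beta$. Then $0=f_{XY}^*(\alpha^{m+1})=\lambda^{m+1}\beta^{m+1}$. Since $\beta^{m+1}\neq 0$ and $\mathbb{F}$ is a field, $\lambda=0$. This is exactly the argument of Example \ref{redu_example}(i).
\end{enumerate}
In either case the matrix is triangular in all degrees. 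Hence every $f\in\AAA_k(X\vee Y;\mathbb{F})$, and likewise every $f\in\AA^k(X\vee Y;\mathbb{F})$ (homology and cohomology over a field being dual), is $k$-reducible.

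Since this holds for every prime field $\mathbb{F}$, Proposition \ref{prop:redu_finite field} gives $k$-reducibility of every self-map in $\AA^k(X\vee Y)$. Alternatively, one can argue directly: take $f\in\AA^k(X\vee Y)$. By the universal coefficient theorem, finite type gives $f\in\AA^k(X\vee Y;\mathbb{F})$ for all $\mathbb{F}$. The above then shows $f_{XX}$ and $f_{YY}$ are isomorphisms on $H_i(-;\mathbb{F})$ for $i\le k$ and all $\mathbb{F}$, and Lemma \ref{lem:hom_prime fields} lifts this to integral homology.

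The main obstacle is that the data $(a,m)$ and $(b,n)$ depend on $\mathbb{F}$, while the hypothesis on connectivity and dimension is integral. I would handle this by observing that connectivity and dimension, for finite type simply connected complexes, are detected by homology with some prime-field coefficients. However, for a particular $\mathbb{F}$ one might have $a=b$ and $m=n$ (e.g.\ both spaces rationally trivial). In that case I would try to argue that the $\mathbb{F}$-cohomology of the two spaces is still distinguished, or else fall back on the triangularity in only those degrees where it is needed. This is where care is required; the ring argument itself is routine.
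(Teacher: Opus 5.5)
Your route is the paper's: fix a prime field, kill one of the ring maps $(f_{XY})^*$, $(f_{YX})^*$ by degree (different connectivity) or by truncation height (equal connectivity, different dimension), deduce that $M^i(f)$ is triangular, and pass back to $\ZZ$ via Proposition~\ref{prop:redu_finite field} or Lemma~\ref{lem:hom_prime fields}. Those steps are fine.

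What is missing is the step you flagged and left open. The case split needs $a_{\mathbb{F}}\neq b_{\mathbb{F}}$ or $m_{\mathbb{F}}\neq n_{\mathbb{F}}$ for \emph{every} prime field. Your remark that connectivity is detected by some prime field only gives this for some $\mathbb{F}$. Your fallback of using triangularity only in the needed degrees cannot work. If for one $\mathbb{F}$ the rings were $\mathbb{F}[\alpha]/(\alpha^{m+1})$ and $\mathbb{F}[\beta]/(\beta^{m+1})$ with $|\alpha|=|\beta|$, the ring structure would constrain the off-diagonal entries in no degree. Yet Proposition~\ref{prop:redu_finite field} needs reducibility over all prime fields. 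The paper's proof passes over the same point: it simply asserts $\conn(X)<\conn(Y)\Rightarrow\deg\alpha<\deg\beta$ over $\mathbb{F}_p$.

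The gap closes using simple connectivity.
\begin{enumerate}[(1)]
\item Since $H_1(X)=0$, the generator has degree $|\alpha|\geq 2$ for every $\mathbb{F}$. So $H^*(X;\mathbb{F})$ is never nonzero in two consecutive positive degrees.
\item Suppose $H_j(X;\ZZ)$ had $p$-torsion. By the universal coefficient theorem, $H^j(X;\mathbb{F}_p)$ surjects onto $\Hom(H_j(X),\mathbb{F}_p)\neq 0$, and $H^{j+1}(X;\mathbb{F}_p)$ contains $\mathrm{Ext}(H_j(X),\mathbb{F}_p)\neq 0$. Both would be nonzero, contradicting (1).
\item Hence $H_*(X;\ZZ)$ is free of finite type, and $\dim_{\mathbb{F}}H^i(X;\mathbb{F})=\operatorname{rank}H_i(X)$ for all $\mathbb{F}$.
\item Therefore $a=\conn(X)+1$ (by Hurewicz) and $am=H_*\text{-}\dim(X)$ do not depend on $\mathbb{F}$. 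The same holds for $Y$.
\end{enumerate}
So the integral hypothesis gives $a\neq b$ or $m\neq n$ over every prime field. Your ``both rationally trivial'' scenario only occurs for contractible spaces.

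Note also that ``dimension'' has to mean $H_*\text{-}\dim$, as you implicitly assume when writing $\dim X=am$. CW dimension is not a homotopy invariant, and with it the statement fails. Take $X=\C^2$ and $Y$ a $10$-dimensional CW model of $\C^2$. The map interchanging the summands via $X\simeq Y$ lies in $\Aut(X\vee Y)$ but has constant $XX$-component, so it is not reducible.
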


\begin{proof}
Suppose that the elements of the cohomology rings $H^*(X)$ and $H^*(Y)$ are defined over different finite groups that do not have a common factor group of the form $\ZZ/{p^r\ZZ}$, even for different values of $r$.  Therefore, the $k$-reducibility of self-maps follows by Lemma \ref{red}(iv).  

Next, assume that for some prime field $\mathbb{F}$, both the cohomology rings $H^*(X;\mathbb{F})$ and $H^*(Y;\mathbb{F})$ are over the same finite field, say $\mathbb{F}_p$. Suppose the cohomology ring $H^*(X;\mathbb{F})$ is a polynomial ring of $\alpha$ and $H^*(Y;\mathbb{F})$ is a polynomial ring of $\beta$. Thus, $p\alpha = p\beta = 0$. If $\conn(X)<\conn(Y)$, then $\deg(\alpha) < \deg(\beta)$. Thus, $f^*(\alpha) = 0$ for any map $f\colon Y\to X$. Moreover, let $\conn(X) = \conn(Y)$ but $\dim(X)< \dim(Y)$. Now $f^*(\alpha) = l\beta$ for some $l\in \ZZ$. There exists an integer, say $m$ such that $\alpha^m = 0$ while $\beta^m\neq 0$. Applying $f^*$, we obtain $f^*(\alpha^m) = l^m\beta^m$ and this implies $l^m\beta^m = 0$. It follows that $p/l^m$ and so $p/l$. Therefore, $f^*(\alpha) = l\beta = 0$ and the map $f\colon Y\to X$ induces trivial homomorphism on cohomology. A similar argument applies when both cohomology rings are defined over $\mathbb{Q}$. Consequently, for every prime field 
$\mathbb{F}$, each self-map in $\AAA_k(X\vee Y, \mathbb{F})$ is $k$-reducible for $k\geq 1$ by using the similar argument as in the proof of Lemma \ref{red}(iii). Hence, by Proposition \ref{prop:redu_finite field}, we get the desired result. 
\end{proof}

\begin{remark}\label{rmk:RP^n}
There exist CW-complexes that are not simply connected, but the conclusion of Theorem \ref{thm:redu_cohom} is valid partially. As an illustrative example, one may take two real projective spaces of different dimensions. We know that 
$$H^*(\mathbb{R} P^r) \cong 
\begin{cases}
    \ZZ[\alpha]/(2\alpha,\alpha^{\frac{r}{2}+1}),~ \deg(\alpha)=2, & \text{ if $r$ is even }\\
    \ZZ[\alpha,\beta]/(2\alpha, \alpha^{\frac{r+1}{2}}, \beta^2,\alpha\beta), ~\deg(\alpha)=2, \deg(\beta)=r, & \text{ if $r$ is odd}.
\end{cases}
$$
Further, for a prime field $\mathbb{F}$, we have 
$$H^*(\mathbb{R}P^r;\mathbb{F}) \cong
\begin{cases}
    \mathbb{F}_2[\alpha]/(\alpha^{r+1}), ~\deg(\alpha) = 1, &\text{ if } \mathbb{F} = \mathbb{F}_2,\\
    \Lambda_{\mathbb{F}}[\alpha], ~\deg(\alpha) = r, &\text{ if } \mathbb{F}\neq \mathbb{F}_2, r \text{ is odd},\\
    \mathbb{F} &\text{ if } \mathbb{F}\neq \mathbb{F}_2, r \text{ is even}.
\end{cases}
$$
Next, consider two real projective spaces of different dimensions, namely $\mathbb{R}P^n$ and $\mathbb{R}P^m$, where $m\neq n$. Let $\alpha$ and $\beta$ denote generators of the cohomology rings $H^*(\mathbb{R}P^n;\mathbb{F})$ and $H^*(\mathbb{R}P^m;\mathbb{F})$, respectively. Without loss of generality, assume that $m>n$. For a map $g\colon \mathbb{R}P^m\to \mathbb{R}P^n$, the induce homomorphism $g^*\colon H^*(\mathbb{R}P^n;\mathbb{F})\to H^*(\mathbb{R}P^m;\mathbb{F})$ satisfies $$g^*(\alpha) = 
\begin{cases}
   l\beta, \text{ for some } l\in \ZZ, &\text{ if } \mathbb{F} = \mathbb{F}_2,\\
   0 &\text{ otherwise}.
\end{cases}
$$
This implies $0 = g^*(\alpha^{n+1}) = l^{n+1}\beta^{n+1}$ when $\mathbb{F} = \mathbb{F}_2$. It follows that $2/l^{n+1}$ and so $2/l$. Thus, $g^*(\alpha) = 0$, and $H^i(g,\mathbb{F})\colon H^i(\mathbb{R}P^n;\mathbb{F})\to H^i(\mathbb{R}P^m;\mathbb{F}) = 0$ for all $i$. An argument analogous to the proof of Lemma \ref{red}(iii), we see that every self-map in $\AAA_k(\mathbb{R} P^n\vee \mathbb{R}P^m;\mathbb{F})$ is $k$-reducible for all $k\geq 1$. Let $f\in \AA^m(\mathbb{R}P^n\vee \mathbb{R}P^m)$. By the universal coefficient theorem for cohomology, we have $f\in \AAA_m(\mathbb{R}P^n\vee \mathbb{R}P^m;\mathbb{F})$ for all prime field $\mathbb{F}$. Take $X = \mathbb{R}P^n$ and $Y = \mathbb{R}P^m$. Thus the components $f_{XX}\in \AAA_m(\mathbb{R}P^n; \mathbb{F}), f_{YY}\in \AAA_m(\mathbb{R}P^m;\mathbb{F})$ and hence so $f_{XX}\in \AAA_m(\mathbb{R}P^n), f_{YY}\in \AAA_m(\mathbb{R}P^m)$ by Lemma \ref{lem:hom_prime fields}. It follows, by \cite[Theorem 12, p.248]{Spanier} that $f_{XX}\in \AA^m(\mathbb{R}P^n), f_{YY}\in \AA^m(\mathbb{R}P^m)$. Consequently, any self-map in $\AA^m(\mathbb{R}P^n\vee \mathbb{R}P^m)$ is $m$-reducible. In addition, for $k\geq 1$, any self-map in $\AA^{2k}(\R^n\vee \R^m)$ is $k$-reducible whenever $m\neq n$.

Nevertheless, if $m\neq n$, then every self-map in $\Aut(\R^n\vee \R^m)$ is reducible.
\end{remark}

Let $X$ and $Y$ be simply connected CW-complexes of finite type. The study of $k$-reducibility of self-maps in $\AA^k(X\times Y)$ (or $\AAA_k(X\times Y)$) reduces to the corresponding problem for self-maps in $\A^k(X\times Y)$, which has already been investigated in \cite{DSMSE, LSPS, PPRSE}. Likewise, the verification of $k$-reducibility for self-maps in $\A^k(X\vee Y)$ is equivalent to verifying the $k$-reducibility of self-maps in $\AA^k(X\vee Y)$ (or $\AAA_k(X\vee Y)$).

%\begin{example}
%Consider the classifying space of the orthogonal group $BO(n)$. For a prime field $\mathbb{F}$, the cohomology ring is given by 
%$$H^*\big(BO(1);\mathbb{F}\big)\cong
%\begin{cases}
%    \mathbb{F}[\alpha], \deg(\alpha)=1  &\text{ if } \mathbb{F} = \mathbb{F}_2,\\
%    \mathbb{F}, &\text{ if } \mathbb{F}\neq \mathbb{F}_2.
%\end{cases}
%$$
%\end{example}

\begin{example}
The cohomology ring of the classifying space of the unitary group $BU(n)$ is given by $H^*(BU(n))\cong \ZZ[\alpha_1,\ldots,\alpha_n]$, where $\deg(\alpha_i) = 2i$.  Let $\mathbb{E} = \mathbb{R}, \mathbb{C},\mathbb{H}$. Arguing as in Example \ref{redu_example}(i), one observes that any map $f\colon BU(n)\to \mathbb{E}P^m$ induces a trivial homomorphism $H^i(f)\colon H^i(\mathbb{E}P^m)\to H^i(BU(n))$ for all $i,m,n$. It follows from Lemma \ref{red}(iii) that, for every $k\geq 1$, any self-map in $\AAA_k(BU(n)\vee \mathbb{E}P^m)$ is $k$-reducible for all $m,n$. Consequently, for $k\geq 1$, each self-map in $\AA^k(BU(n)\vee \mathbb{E}P^m)$ is $k$-reducible for all $m,n$. This conclusion follows from the universal coefficient theorem for cohomology together with the Lemma \ref{leq}, whenever $\mathbb{F} = \mathbb{C}, \mathbb{H}$. Further, by \cite[Theorem 12, p.248]{Spanier}, any self-map in $\AA^k(BU(n)\vee \R^m)$ is $k$-reducible for $k\geq m$.
\end{example}

\begin{example}
Consider two CW-complexes $S^2\times S^4\times \cdots \times S^{2m}$ and $\C^n\times \C^{n'}$, where $m$ could be any positive integer and $n,n'\geq 2$. For instance, take $m(m+1) = 2(n+n')$. The cohomology rings are given by $H^*(\mathbb{C}P^n\times \mathbb{C}P^{n'})\cong \ZZ[\alpha_1,\alpha_2]/(\alpha^{n+1}_1, \alpha^{n'+1}_2)$, where $\deg(\alpha_1) = \deg(\alpha_2) = 2$ and $H^*(S^2\times S^4\times \cdots \times S^{2m})\cong \ZZ[\beta_1,\ldots,\beta_m]/(\beta^2_1,\ldots,\beta^2_m)$, where $\deg(\beta_i) = 2i$ for all $1\leq i\leq m$. Notice that $H^{2i}(f)\colon H^{2i}(\C^n\times \C^{n'})\to H^{2i}(S^2\times S^4\times \cdots \times S^{2m})$ is trivial for all $2\leq i\leq n+n'$.
Because, for a map $f\colon S^2\times S^4\times \cdots \times S^{2m}\to \C^n\times \C^{n'}$, we obtain $f^*(\alpha^r_1\alpha^l_2) =f^*(\alpha_1)^rf^*(\alpha_2)^l = 0$, where $2\leq r+l\leq n+n'$ by an argument similar to that in Example \ref{redu_example} (iv). In addition, for a map $g\colon \C^n\times \C^{n'}\to S^2\times S^4\times \cdots \times S^{2m}$, we have $g^*(\beta_1) = 0$. Thus, $H^2(g)\colon H^2(S^2\times S^4\times \cdots \times S^{2m})\to H^2(\C^n\times \C^{n'})) = 0$.
Consequently, for $k\geq 1$, every self-map in $\AA^k\big(\mathbb{C}P^n\vee (S^2\times S^4\times S^{2m})\big)$ is $k$-reducible by Lemma \ref{red}(iii).
\end{example}

The conclusion of Lemma \ref{red}(iii) partially extends to the class of non-simply connected CW-complexes, as illustrated by the following example.
\begin{example}\label{exam:redu_multi_generators}
The cohomology ring of the special orthogonal group $SO(5)$ is given by $H^*(SO(5))\cong \ZZ[\alpha_1, \alpha_2, \alpha_3]/(2\alpha_1, \alpha_1^4, \alpha_2^4, \alpha_3^2, \alpha_1\alpha_3, \alpha_1^3-\alpha_2^2)$, where $\deg(\alpha_1)=2, \deg(\alpha_2) = 3, \deg(\alpha_3) = 7$. For a prime field $\mathbb{F}$, we have 
$$
H^*\big(SO(5);\mathbb{F}\big)\cong
\begin{cases}
    \mathbb{F}_2[\alpha_1,\alpha_2]/(\alpha^8_1,\alpha^2_2), &\text{ where } \deg(\alpha_1) =1, \deg(\alpha_2) = 3 ~\text{ for } \mathbb{F} = \mathbb{F}_2,\\
    \Lambda_{\mathbb{F}}[\alpha_1,\alpha_2], &\text{ where } \deg(\alpha_1) = 3, \deg(\alpha_2) = 7 ~\text{ for } \mathbb{F} \neq \mathbb{F}_2.
\end{cases}
$$
For each $i$, at least one of the homomorphisms $H^i(f;\mathbb{F})\colon H^i(SO(5);\mathbb{F})\to H^i(\mathbb{R}P^n;\mathbb{F})$ or $H^i(g;\mathbb{F})\colon H^i(\mathbb{R}P^n;\mathbb{F})\to H^i(SO(5);\mathbb{F})$ induced by any maps $f\colon \R^n\to SO(5)$ and $g\colon SO(5)\to \R^n$, is trivial whenever $n\neq 3, 7$. Therefore, by applying an argument analogous to that used in the proof of Lemma \ref{red}(iii), we conclude that any self-map in $\AAA_k\big(SO(5)\vee \mathbb{R}P^n;\mathbb{F}\big)$ is $k$-reducible for all $k\geq 1$. Consequently, for $k\geq \max\{n,8\}$, every self-map in $\AA^k\big(SO(5)\vee \mathbb{R}P^n\big)$ is $k$-reducible by the universal coefficient theorem for cohomology together with \cite[Theorem 12, p.248]{Spanier}. 

Similarly, for $\mathbb{E} = \mathbb{C}\text{ and } \mathbb{H}$, each self-map in $\AA^k(SO(5)\vee \mathbb{E}P^n)$ is $k$-reducible for all $k\geq 8$, as $\mathbb{E}P^n$ is simply connected. However, every self-maps in $\Aut(SO(5)\vee \mathbb{E}P^n)$ is reducible for all $\mathbb{E} = \mathbb{R}, \mathbb{C}, \mathbb{H}$.
\end{example}

\begin{example}
The cohomology ring of the special unitary group $SU(m)$ is given by $H^*(SU(m))\cong \Lambda_{\ZZ}[\alpha_3,\ldots, \alpha_{2m-1}]$, where $\deg(\alpha_i) = i$. Similarly, for the symplectic group $Sp(m)$, we have  $H^*(Sp(m))\cong \Lambda_{\ZZ}[\beta_3,\ldots,\beta_{4m-1}]$, where $\deg(\beta_i) = i$. Let $\mathbb{E} = \mathbb{C}\text{ or } \mathbb{H}$. Under the stated degree conditions on the generators, we see that $\Hom(H^i(SU(m)),H^i(\mathbb{E}P^n))=0$ and $\Hom(H^i(Sp(m)),H^i(\mathbb{E}P^n)=0$ for all odd integers $i$. Consequently, for $k\geq 1$, any self-map in $\AA^k(SU(m)\vee \mathbb{E}P^n)$ and $\AA^k(Sp(m)\vee \mathbb{E}P^n)$ is $k$-reducible for all integers $m$ and $n$ according to Lemma \ref{red}(iii). 

The situation is different for $\mathbb{E} = \mathbb{R}$. For $k\geq 1$, any self-map in $\AA^k(SU(m)\vee \R^n)$ is $k$-reducible, provided that either $n$ is even or $n>m^2-1$, by Lemma \ref{red}(i).

Moreover, consider the classifying space of the special unitary group $BSU(m)$ and the symplectic group $BSp(m)$, respectively. The cohomology rings are given by $H^*(BSU(m))\cong \ZZ[\alpha_2,\ldots,\alpha_m]$, where $\deg(\alpha_i) = 2i$ and $H^*(BSp(n))\cong \ZZ[\beta_1,\ldots, \beta_n]$, where $\deg(\beta_i) = 4i$. For degree reasons, any self-map in the monoid $\AA^k(BG\vee G')$ is $k$-reducible, where $G= SU(m) \text{ or } Sp(m)$ and $G' = SU(n) \text{ or } Sp(n)$ for all $m,n$. This conclusion follows directly from Lemma \ref{red}(iii).
\end{example}

%------------------------------------------------------

\section{Nilpotency and Reducibility} \label{nilpotent_reducibility}
In this section, we recall a notion called nilpotency of an endomorphism map that helps to determine the reducibility of self-maps in $\AA^k(X\vee Y)$. Let $G$ be an abelian group written additively. An endomorphism $\phi\colon G\to G$ is called \emph{quasi-regular} if the map $\Id-\phi\colon G\to G$ is an isomorphism. In general, the endomorphism ring $\End(G)$ is a non-commutative ring with unity $\Id\colon G\to G$ and zero element $0\colon G\to G$, the constant map. 
An endomorphism $\phi\colon G\to G$ is said to be nilpotent if there exists some integer $n$ such that $\phi^n = \phi\circ \cdots \circ\phi = 0$. It is well-known that for a nilpotent endomorphism $\phi$, both the maps $\Id - \phi$ and $\Id + \phi$ are units (isomorphisms) in $\End(G)$. It follows that a nilpotent endomorphism $\phi$ is always quasi-regular.
Further, \emph{Jacobson radical} of the ring $\End(G)$ is defined as $J(\End(G)):= \{\phi\in \End(G): \Id + (\psi\circ \phi\circ \varphi) ~\text{ is unit for all } \psi,\varphi\in \End(G)\}$. An endomorphism $\phi\colon G\to G$ is radical if it is in the Jacobson radical $J(\End(G))$.

\begin{definition}[{\cite[Section 2]{PPRSE}}]
A self-map $f\colon X\to X$ is said to be \emph{$n$-quasi-regular}, if the induced endomorphism $f_*\colon H_i(X)\to H_i(X)$ is \emph{quasi-regular} for $i\leq n$. 
\end{definition}
Therefore, a self-map $f\colon X\to X$ is quasi-regular if $n=\infty$.

The following theorem is a generalization of \cite[Proposition 2.1]{PPRSE} and \cite[Theorem 4.2]{LSPS}.
\begin{theorem}\label{redu_radical}
Assume that each self-map of $Y$ that factors (up to homotopy) through $X$ induces radical endomorphism on homology group of $Y$ up to the degree $k$. Then a self-map $f\in \AA^k(X\vee Y)$ is $k$-reducible if $f_{XX}\in \Aut(X).$
\end{theorem}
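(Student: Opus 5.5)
Since $f_{XX}\in\Aut(X)$ by hypothesis, it already lies in $\AA^k(X)$. So the only thing to prove is $f_{YY}\in\AA^k(Y)$, and the plan is to show that $(f_{YY})_*$ is an isomorphism on $H_i(Y)$ for each $i\le k$. We use the matrix description of $f_*$ from the proof of Lemma \ref{red}: for $i\leq k$,
\[
M_i(f)=\begin{bmatrix} (f_{XX})_* & (f_{XY})_*\\ (f_{YX})_* & (f_{YY})_*\end{bmatrix},
\]
which is invertible on $H_i(X)\oplus H_i(Y)$ because $f\in\AA^k(X\vee Y)$, and whose corner $A:=(f_{XX})_*$ is invertible because $f_{XX}\in\Aut(X)$.

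The key step is a Schur complement argument in this $2\times 2$ block matrix. Write $B=(f_{XY})_*$, $C=(f_{YX})_*$, $D=(f_{YY})_*$. Multiply $M_i(f)$ by the elementary (hence invertible) block matrices $\begin{bmatrix}1&0\\-CA^{-1}&1\end{bmatrix}$ on the left and $\begin{bmatrix}1&-A^{-1}B\\0&1\end{bmatrix}$ on the right. The result is $\operatorname{diag}(A,\,D-CA^{-1}B)$. Since $M_i(f)$ and $A$ are invertible, it follows that $S:=D-CA^{-1}B$ is an automorphism of $H_i(Y)$.

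Next we realise $CA^{-1}B$ topologically. Let $h\colon X\to X$ be a homotopy inverse of $f_{XX}$, and set
\[
\theta:=f_{YX}\circ h\circ f_{XY}\colon Y\to X\to X\to Y.
\]
This is a self-map of $Y$ that factors through $X$, and by functoriality $\theta_*=CA^{-1}B$ on $H_i(Y)$. By the hypothesis of the theorem, $\theta_*$ lies in the Jacobson radical $J(\End(H_i(Y)))$ for every $i\le k$.

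Finally, write $D=S+\theta_*=S(\Id+S^{-1}\theta_*)$. The Jacobson radical is a two-sided ideal, so $S^{-1}\theta_*\in J$. Taking $\psi=S^{-1}$ and $\varphi=\Id$ in the definition of $J$, the element $\Id+S^{-1}\theta_*$ is a unit. Hence $D=(f_{YY})_*$ is a product of units and is an isomorphism for all $i\le k$. This gives $f_{YY}\in\AA^k(Y)$, so $f$ is $k$-reducible.

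The main obstacle I expect is the realisation step. One must check that the algebraic Schur complement term is induced by an actual map $Y\to Y$ factoring through $X$; this is exactly where $f_{XX}\in\Aut(X)$ (and not merely $(f_{XX})_*$ invertible in low degrees) is used. One must also make sure the non-commutativity of $\End$ is handled in the correct order when factoring out the unit $S$.
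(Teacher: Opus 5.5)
Your proof is correct and follows essentially the paper's argument: both isolate the Schur complement $S=D-CA^{-1}B$, realise $CA^{-1}B$ as $(f_{YX}\circ h\circ f_{XY})_*$ using a homotopy inverse $h$ of $f_{XX}$, and conclude from $D=S(\Id+S^{-1}\theta_*)$ by taking $\psi=S^{-1}$, $\varphi=\Id$ in the definition of the radical. The only cosmetic difference is that you show $S$ is invertible by block elimination, whereas the paper reads it off as the inverse of the $(2,2)$ block $\phi^i_{22}$ of the inverse matrix.
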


\begin{proof}
Let $f\in \AA^k(X\vee Y)$ such that $f_{XX}\in \Aut(X)$. It is sufficient to show that $f_{YY}\in \AA^k(Y).$ For $i\leq k$, the matrix $M_i(f)$ is invertible. Thus, there exists an isomorphism $\phi_i\colon H_i(X)\oplus H_i(Y)\to H_i(X)\oplus H_i(Y)$ such that $$M_i(f)\cdot M(\phi_i) = M(\phi_i)\cdot M_i(f) = M(\Id_{H_i(X)\oplus H_i(Y)}) ~\text{ for } i\leq k,$$
where \[
M(\phi_i) = 
\begin{bmatrix}
\phi^i_{11} & \phi^i_{12} \\
\phi^i_{21} & \phi^i_{22} 
\end{bmatrix}.
\]
It follows that $(f_{XX})_*\phi^i_{12} + (f_{XY})_*\phi^i_{22} = 0$, and $(f_{YX})_*\phi^i_{12} + (f_{YY})_*\phi^i_{22}= \Id_{H_i(Y)}$. Thus $\phi^i_{12} = -(f_{XX})^{-1}_*(f_{XY})_*\phi^i_{22}$, since $(f_{XX})_*\in \Aut(H_i(X))$. This implies $$\Big((f_{YY})_* - (f_{YX})_*(f_{XX})^{-1}_*(f_{XY})_*\Big)\phi^i_{22} = \Id_{H_i(Y)}.$$ We proceed similarly for the equality $M(\phi_i)\cdot M_i(f) = \Id_{H_i(X)\oplus H_i(Y)}$ and obtain
$$\phi^i_{21}(f_{XX})_* + \phi^i_{22}(f_{YX})_* = 0, ~\text{ and } \phi^i_{21}(f_{XY})_* + \phi^i_{22}(f_{YY})_* = \Id_{H_i(Y)}.$$
Therefore, $\phi^i_{22}\Big((f_{YY})_* - (f_{YX})_*(f_{XX})^{-1}_*(f_{XY})_*)\Big) = \Id_{H_i(Y)}.$ Hence $\phi^i_{22}\in \Aut(H_i(Y))$. Let $\bar{f}_{XX}$ be a homotopy inverse of $f_{XX}\in \Aut(X)$. Thus, $(f_{XX})^{-1}_* = (\bar{f}_{XX})_*$ and it implies

\begin{align*}
(f_{YY})_* = ~& (\phi^i_{22})^{-1} + (f_{YX})_* (\bar{f}_{XX})_*(f_{XY})_*\\ = ~& (\phi^i_{22})^{-1} + (f_{YX})_* (\bar{f}_{XX})_*(f_{XY})_*\\
= ~& (\phi^i_{22})^{-1} + (f_{YX}\circ \bar{f}_{XX}\circ f_{XY})_*\\
= ~& (\phi^i_{22})^{-1}\Big(\Id_{H_i(Y)}) + \phi^i_{22}(f_{YX}\circ \bar{f}_{XX}\circ f_{XY})_*\Big).
\end{align*}
By the given assumption $(f_{YX}\circ \bar{f}_{XX}\circ f_{XY})_*\colon H_i(Y)\to H_i(Y)$ is radical endomorphism for $i\leq k$. This implies $\Id_{H_i(Y)} + \phi^i_{22}(f_{YX}\circ \bar{f}_{XX}\circ f_{XY})_*\in \Aut(H_i(Y))$ and so $(f_{YY})_*\in \Aut(H_i(Y))$ for $i\leq k$. Hence, $f_{YY}\in \AA^k(Y)$. This completes the proof. 
\end{proof}

\begin{definition}[{\cite[Section 4]{PPRSE}}]
Let $X$ and $Y$ be two simply connected CW-complexes, and $n$ be a fixed positive integer. We say that $X$ and $Y$ are \emph{homologically $n$-distant} if any self-map of $X$ that factors (up to homotopy) through $Y$ induces a nilpotent endomorphism of $H_i(X)$ for $i\leq n$. 
\end{definition}
This definition is symmetric in the sense that if $X$ and $Y$ are homologically $n$-distant, then any self-map of $Y$ that factors (up to homotopy) through $X$ induces nilpotent endomorphism on $H_i(Y)$ for $i\leq n$. If this condition holds for all $n$, that is, for $n = \infty$, then $X$ and $Y$ are said to be homologically distant. In a similar manner, one defines the notion of homotopically or cohomologically $n$-distant spaces (with integer coefficients).

The following proposition presents an alternative formulation of \ref{redu_radical}. It restates the main conclusion in an equivalent but structurally refined manner.
\begin{proposition}\label{prop_redu_end}
Assume that $X, Y$ are homologically $k$-distant and for each $i\leq k$, the ring $\End(H_i(Y))$ is commutative. Then a self-map $f\in \AA^k(X\vee Y)$ is $k$-reducible if $f_{XX}\in \Aut(X)$.
\end{proposition}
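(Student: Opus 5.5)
The plan is to reduce the statement to Theorem \ref{redu_radical}. That means checking its hypothesis: every self-map of $Y$ that factors up to homotopy through $X$ must induce a radical endomorphism of $H_i(Y)$ for $i\leq k$. Once this holds, Theorem \ref{redu_radical} applied to $f\in \AA^k(X\vee Y)$ with $f_{XX}\in\Aut(X)$ gives $f_{YY}\in\AA^k(Y)$ directly, so $f$ is $k$-reducible.

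Fix a self-map $h = a\circ b$ of $Y$ with $b\colon Y\to X$ and $a\colon X\to Y$, and fix $i\leq k$. Since $X$ and $Y$ are homologically $k$-distant, and the definition is symmetric as noted above, $\phi:=h_*\in\End(H_i(Y))$ is nilpotent, say $\phi^n=0$. To show $\phi\in J(\End(H_i(Y)))$ we must check that $\Id+\psi\phi\varphi$ is a unit for all $\psi,\varphi\in\End(H_i(Y))$. Here we use that $\End(H_i(Y))$ is commutative: $\psi\phi\varphi=(\psi\varphi)\phi$, and
$$\big((\psi\varphi)\phi\big)^n=(\psi\varphi)^n\phi^n=0.$$
So $\psi\phi\varphi$ is nilpotent, and by the remark at the start of this section $\Id+\psi\phi\varphi$ is invertible, with inverse $\sum_{j<n}(-1)^j(\psi\phi\varphi)^j$. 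Hence $\phi$ is radical. In other words, in a commutative ring every nilpotent element lies in the nilradical, which is contained in the Jacobson radical.

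The only point needing care is the orientation of the distance hypothesis. The definition is stated for self-maps of $X$ factoring through $Y$, whereas Theorem \ref{redu_radical} needs self-maps of $Y$ factoring through $X$; the symmetry remark after the definition supplies exactly this. There is also a technicality: Theorem \ref{redu_radical} as stated asks for radicality in each degree up to $k$, and nilpotency is likewise given for each $i\leq k$ separately, so the argument works degree by degree with no uniformity in $n$ needed. I expect no serious obstacle; the commutativity hypothesis is used exactly once, to pass from nilpotency of $\phi$ to nilpotency of $\psi\phi\varphi$.
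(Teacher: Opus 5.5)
Your proof is correct and is essentially the paper's argument. Both use the commutativity of $\End(H_i(Y))$ to make the relevant product with the nilpotent endomorphism $(f_{YX}\circ\bar{f}_{XX}\circ f_{XY})_*$ nilpotent, so that $\Id$ plus that product is a unit, and then finish with Theorem~\ref{redu_radical}. The only difference is packaging: you check the radical hypothesis for all $\psi,\varphi$ and cite the theorem as stated, while the paper reopens its proof and applies commutativity only to the specific factor $\phi^i_{22}$.
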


\begin{proof}
Let $\bar{f}_{XX}$ be a homotopy inverse of $f_{XX}$ in $\Aut(X)$. By the given assumption the self-map $f_{YX}\circ \bar{f}_{XX}\circ f_{XY}\colon Y\to Y$ induces nilpotent endomorphism of $H_i(Y)$ for $i\leq k$. Further, the endomorphism $\phi^i_{22}\colon H_i(Y)\to H_i(Y)$ in the proof of Theorem \ref{redu_radical} commutes with $(f_{YX}\circ \bar{f}_{XX}\circ f_{XY})_*\colon H_i(Y)\to H_i(Y)$ for $i\leq k$. It follows that the map $\phi^i_{22}\circ (f_{YX}\circ \bar{f}_{XX}\circ f_{XY})_*\colon H_i(Y)\to H_i(Y)$ is nilpotent endomorphism and so quasi-regular for $i\leq k$. Therefore, $\Id_{H_i(Y)} + \phi^i_{22}\circ (f_{YX}\circ \bar{f}_{XX}\circ f_{XY})_*\in \Aut(H_i(Y))$. From the proof of Theorem \ref{redu_radical}, we obtain $(f_{YY})_*\in \Aut(H_i(Y))$ for $i\leq k$. Hence, $f_{YY}\in \AA^k(Y)$ and we get the desired result.
\end{proof}

\begin{corollary}\label{cor:k-reducibility}
Let $X$ and $Y$ be homologically $k$-distant. Suppose $H_i(Y)\cong \displaystyle\bigoplus^{m}_{j=1} \ZZ/{{p_j^{r_j}}\ZZ}$, where $p_j$ are distinct primes, for $i\leq k$. Then a self-map $f\in \AA^k(X\vee Y)$ is $k$-reducible if $f_{XX}\in \Aut(X)$.
\end{corollary}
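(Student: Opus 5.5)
The plan is to deduce the corollary directly from Proposition \ref{prop_redu_end}. The hypothesis that $X$ and $Y$ are homologically $k$-distant is already assumed. So the only thing to check is that, for each $i\leq k$, the endomorphism ring $\End(H_i(Y))$ is commutative when $H_i(Y)\cong \bigoplus_{j=1}^m \ZZ/p_j^{r_j}\ZZ$ with the $p_j$ pairwise distinct primes.

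To verify commutativity, I would first observe that, since the $p_j$ are distinct, the Chinese remainder theorem gives
\[
\bigoplus_{j=1}^m \ZZ/p_j^{r_j}\ZZ \;\cong\; \ZZ/N\ZZ, \qquad N=\prod_{j=1}^m p_j^{r_j}.
\]
Thus $H_i(Y)$ is a finite cyclic group. For a cyclic group $\ZZ/N\ZZ$, every endomorphism is determined by the image of the generator $1$. Hence each endomorphism is multiplication by some integer $a$, and $\End(\ZZ/N\ZZ)\cong \ZZ/N\ZZ$ as rings. This ring is commutative.

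An equivalent route avoids the Chinese remainder theorem. One notes that $\Hom(\ZZ/p_j^{r_j}\ZZ,\ZZ/p_l^{r_l}\ZZ)=0$ for $j\neq l$, because orders are coprime. Therefore the endomorphism matrix is diagonal, and $\End(H_i(Y))\cong\prod_j \ZZ/p_j^{r_j}\ZZ$, which is again commutative. Either argument is routine. The degenerate cases where $H_i(Y)=0$ (so $m=0$) are also fine, since then the ring is trivial.

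With commutativity of $\End(H_i(Y))$ established for all $i\leq k$, every hypothesis of Proposition \ref{prop_redu_end} holds. Applying it, any $f\in\AA^k(X\vee Y)$ with $f_{XX}\in\Aut(X)$ satisfies $f_{YY}\in\AA^k(Y)$, and hence $f$ is $k$-reducible.

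There is no real obstacle. The only point needing care is the distinctness of the primes: with repeated primes, for example $\ZZ/p\oplus\ZZ/p$, the endomorphism ring is a matrix ring and fails to be commutative. So I would explicitly state where distinctness is used.
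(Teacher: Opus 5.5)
Your proposal is correct and matches the paper's proof. The paper also reduces to Proposition~\ref{prop_redu_end} by computing $\End(H_i(Y))\cong\bigoplus_{j}\ZZ/p_j^{r_j}\ZZ$, a commutative ring; this is exactly your second route, and your Chinese remainder variant yields the same ring $\ZZ/N\ZZ$.
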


\begin{proof}
For $i\leq k$, we have $$\End(H_i(Y))\cong \End\Big(\displaystyle\bigoplus^{m}_{j=1} \ZZ/{{p_j^{r_j}}\ZZ}\Big) \cong \displaystyle\bigoplus^{m}_{j=1} \End(\ZZ/{{p_j^{r_j}}\ZZ}) \cong \displaystyle\bigoplus^{m}_{j=1} \ZZ/{{p_j^{r_j}}\ZZ}.$$
Therefore, using Proposition \ref{prop_redu_end}, we get the desired result.
\end{proof}

The following lemma provides a sufficient condition for nilpotency of an endomorphism over finite groups.
\begin{lemma}\label{prime_nilpotent}
Let $H=\displaystyle\bigoplus^{m}_{s=1} \ZZ/{p^{r_s}_s\ZZ}$, where all $p_s$ are distinct primes. If $G$ is a group such that it does not contain any $\ZZ/{p^{r_s}_s\ZZ}$ as a direct summand, then every endomorphism of $H$ that factors through $G$ is nilpotent.
\end{lemma}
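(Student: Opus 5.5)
Write the endomorphism as $\phi=\beta\circ\alpha$, where $\alpha\colon H\to G$ and $\beta\colon G\to H$ are homomorphisms; throughout I take $G$ to be abelian, as in the rest of the paper. The plan is to show that $\phi$ is ``diagonal'' with respect to the primary decomposition of $H$, reduce nilpotency to a statement about each cyclic summand, and get a contradiction with the no-direct-summand hypothesis whenever a diagonal entry fails to be nilpotent.

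\emph{Step 1: $\End(H)$ is a product of local rings.} Let $H_s=\ZZ/p_s^{r_s}\ZZ$, with inclusions $\iota_s\colon H_s\to H$ and projections $\pi_s\colon H\to H_s$. The primes $p_s$ are distinct, so $\Hom(H_s,H_t)=0$ for $s\neq t$. Hence every endomorphism of $H$ is diagonal, exactly as in the computation of $\End(H_i(Y))$ in Corollary \ref{cor:k-reducibility}. In particular
\[
\phi=\bigoplus_{s=1}^m \phi_s, \qquad \phi_s=\pi_s\circ\phi\circ\iota_s ,
\]
and each $\phi_s$ is multiplication by some $a_s\in \ZZ/p_s^{r_s}\ZZ$. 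The ring $\ZZ/p_s^{r_s}\ZZ$ is local: $a_s$ is either a unit or lies in $p_s\ZZ/p_s^{r_s}\ZZ$. In the second case $a_s^{r_s}=0$. So if every $a_s$ is a non-unit, then $\phi^{N}=0$ with $N=\max_s r_s$, and $\phi$ is nilpotent.

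\emph{Step 2 (the key step): no $a_s$ can be a unit.} Suppose $a_s$ is a unit for some $s$. Set $\alpha_s:=\alpha\circ\iota_s\colon H_s\to G$ and $\rho:=a_s^{-1}\,\pi_s\circ\beta\colon G\to H_s$. By Step 1,
\[
\rho\circ\alpha_s=a_s^{-1}\,\pi_s\beta\alpha\iota_s=a_s^{-1}\phi_s=\Id_{H_s}.
\]
Thus $H_s$ is a retract of the abelian group $G$, and the standard splitting gives
\[
G\cong \alpha_s(H_s)\oplus\ker\rho\cong \ZZ/p_s^{r_s}\ZZ\oplus \ker\rho .
\]
This contradicts the hypothesis that $G$ has no direct summand isomorphic to $\ZZ/p_s^{r_s}\ZZ$. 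Hence every $a_s$ is a non-unit, and Step 1 shows that $\phi$ is nilpotent.

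The main point needing care is Step 2. It must use the diagonal entry $\pi_s\phi\iota_s$ itself, which is legitimate only because of Step 1; the off-diagonal terms vanish because the primes are distinct. Without distinct primes, a unit could appear only in a mixed matrix entry, and the retraction argument would fail. The commutativity of $G$ is used precisely to turn the retraction into a direct-sum splitting.
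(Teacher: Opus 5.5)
Your proof is correct and follows the same route as the paper: both use the distinctness of the primes to make $\phi$ diagonal, then show that each diagonal entry $\pi_s\phi\iota_s$, which still factors through $G$, is nilpotent. The only difference is that the paper cites \cite[Proposition 4.15]{PPRSE} for that last step, while you prove it directly from the unit/non-unit dichotomy in the local ring $\ZZ/p_s^{r_s}\ZZ$ and the splitting of a retract of an abelian group; your remark that $G$ must be abelian for this splitting is apt.
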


\begin{proof}
Let $\phi\colon H \to H$ be an endomorphism that factors through the group $G$. Such an endomorphism can be identified with the matrix $M(\phi) = (\phi_{ij})_{m\times m}$, where $\phi_{ij}\colon \ZZ/{p^{r_j}_j\ZZ}\to \ZZ/{p^{r_i}_i\ZZ}$ is the homomorphsim for $1\leq i,j\leq m$. For $i\neq j$, it is obvious that $\phi_{ij} = 0$. 
Consequently, for every $n\geq 1$, the matrix of the $n$-fold iterate $\phi^n$ has the form $M(\phi^n) = (\phi^n_{ij})_{m\times m}$ with $\phi^n_{ij} = 0$ whenever $i\neq j$. Further, each diagonal endomorphism $\phi_{ii}\colon \ZZ/{p^{r_i}_i\ZZ}\to \ZZ/{p^{r_i}_i\ZZ}$ also factors through $G$. This factorization is illustrated by the following commutative diagram below:
$$
\xymatrix{
\ZZ/{p^{r_i}_i\ZZ} \ar@{^{(}->} [r]^{\iota_i} & \displaystyle\bigoplus^{m}_{s=1} \ZZ/{p^{r_s}_s\ZZ} \ar[rr]^{\phi} \ar[dr]  && \displaystyle\bigoplus^{m}_{s=1} \ZZ/{p^{r_s}_s\ZZ} \ar[r]^{p_i} & \ZZ/{p^{r_i}_i\ZZ}\\
&& G. \ar[ur] &&
}
$$
From \cite[Proposition 4.15]{PPRSE}, we see that the endomorphism $\phi_{ii}\colon \ZZ/{p^{r_i}_i\ZZ}\to \ZZ/{p^{r_i}_i\ZZ}$ is nilpotent for $i=1,\ldots,m$. This implies that $\phi$ is a nilpotent endomorphism.
\end{proof}

In the following proposition, we explicitly determine the $k$-reducibility.
\begin{proposition}\label{nilpotent_reducibility_homdis}
Suppose $X = K(G,n)$, where $n\geq 2$. Let $Y$ be a simply connected CW-complex such that $H_n(Y)\cong \displaystyle\bigoplus^{m}_{j=1}\ZZ/{p_j^{r_j}\ZZ}$, where $p_j$ are distinct primes and $r_j\geq 1.$ If none of the factors $\ZZ/{p_j^{r_j}\ZZ}$ is a direct summand of $G$, then every self-map in $\AA^n(X\vee Y)$ is $n$-reducible.
\end{proposition}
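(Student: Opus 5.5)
The plan is to treat degrees below $n$ trivially and then work with the $2\times 2$ matrix $M_n(f)$ from the proof of Lemma \ref{red}. The idea is to push every off-diagonal composite into an endomorphism of $H:=H_n(Y)$ that factors through $G$, because Lemma \ref{prime_nilpotent} makes all such endomorphisms nilpotent.

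\emph{Degrees $i<n$.} Since $X=K(G,n)$ with $n\geq 2$, we have $H_i(X)=0$ for $i<n$ and $H_n(X)\cong G$. So for $i<n$ the matrix $M_i(f)$ of a map $f\in\AA^n(X\vee Y)$ reduces to the single entry $(f_{YY})_*$. Hence $(f_{YY})_*$ is an isomorphism on $H_i(Y)$, and $(f_{XX})_*$ is trivially an isomorphism on $H_i(X)=0$. It remains to treat degree $n$.

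\emph{Degree $n$: the $Y$-block.} Write
\[
M_n(f)=\begin{bmatrix} a & b\\ c & d\end{bmatrix},
\]
with $a=(f_{XX})_*\in\End(G)$, $b=(f_{XY})_*\colon H\to G$, $c=(f_{YX})_*\colon G\to H$ and $d=(f_{YY})_*\in \End(H)$. Let $(\phi_{jl})$ be the inverse matrix, as in the proof of Theorem \ref{redu_radical}.
\begin{itemize}
\item From $M_n(f)\cdot M(\phi)=\Id$ we get $d\phi_{22}=\Id_H-c\phi_{12}$.
\item From $M(\phi)\cdot M_n(f)=\Id$ we get $\phi_{22}d=\Id_H-\phi_{21}b$.
\end{itemize}
The endomorphisms $c\phi_{12}$ and $\phi_{21}b$ of $H$ both factor through $G$. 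By hypothesis no $\ZZ/p_j^{r_j}\ZZ$ is a direct summand of $G$, so Lemma \ref{prime_nilpotent} shows both are nilpotent. Therefore $\Id_H-c\phi_{12}$ and $\Id_H-\phi_{21}b$ are units in $\End(H)$. Consequently $d$ has both a right inverse and a left inverse, so $d\in\Aut(H)$, and $f_{YY}\in\AA^n(Y)$.

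\emph{Degree $n$: the $X$-block.} This is the step I expect to be the main obstacle. The natural difficulty is that an endomorphism of $G$ factoring through the finite group $H$ need not obviously be radical, and there is no analogue of Lemma \ref{prime_nilpotent} with the roles of $G$ and $H$ reversed. The plan is to avoid this by transferring the problem back to $H$.
\begin{enumerate}
\item Since $d$ is invertible and $M_n(f)$ is invertible, the Schur complement $S:=a-bd^{-1}c$ is an automorphism of $G$. This is the standard block-matrix identity, valid over the noncommutative ring $\End(G\oplus H)$.
\item Write $a=S(\Id_G+uv)$ with $u:=S^{-1}bd^{-1}\colon H\to G$ and $v:=c\colon G\to H$.
\item By Jacobson's lemma, $\Id_G+uv$ is invertible if and only if $\Id_H+vu$ is invertible. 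Explicitly, if $w$ inverts $\Id_H+vu$, then $\Id_G-uwv$ inverts $\Id_G+uv$.
\item Now $vu=cS^{-1}bd^{-1}$ is an endomorphism of $H$ that factors through $G$, so it is nilpotent by Lemma \ref{prime_nilpotent}. Hence $\Id_H+vu\in\Aut(H)$.
\end{enumerate}
It follows that $a\in\Aut(G)$, so $f_{XX}\in\AA^n(X)$. Together with the previous step, this shows $f$ is $n$-reducible.
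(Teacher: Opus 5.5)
Your proof is correct and has the same skeleton as the paper's: reduce to degree $n$, invert $M_n(f)$, apply Lemma \ref{prime_nilpotent} to an endomorphism of $H_n(Y)$ factoring through $G$, then use a Schur-complement step for the $X$-block. Both key steps are executed differently, though.

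\emph{The $Y$-block.} The paper uses only $d\phi_{22}=\Id_H-c\phi_{12}$. It realizes $\phi_{12}$ by a map $\psi\colon Y\to K(G,n)$ in order to invoke homological $n$-distance. It then implicitly relies on $H_n(Y)$ being finite to pass from ``$d\phi_{22}$ is a unit'' to ``$d$ and $\phi_{22}$ are automorphisms.'' Your use of both one-sided identities is purely algebraic and needs neither the realization nor finiteness. As a result, your argument uses nothing about $X$ beyond $H_i(X)=0$ for $i<n$ and $H_n(X)\cong G$.

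\emph{The $X$-block.} Here the paper is shorter: once $\phi_{22}=d^{-1}(d\phi_{22})$ is invertible, one has directly $a^{-1}=\phi_{11}-\phi_{12}\phi_{22}^{-1}\phi_{21}$. Your Jacobson-lemma route is equivalent to this. Since $\phi_{12}=-S^{-1}bd^{-1}$, your $vu$ equals $-c\phi_{12}$, so $\Id_H+vu=d\phi_{22}$, which is the very unit from your previous step. The second appeal to Lemma \ref{prime_nilpotent} is therefore redundant.

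\emph{The anticipated obstacle.} It does not actually arise. For $\alpha\colon G\to H$ and $\beta\colon H\to G$ one has $(\beta\alpha)^{k+1}=\beta(\alpha\beta)^k\alpha$. Hence every endomorphism of $G$ that factors through $H$ is also nilpotent. The $X$-block can then be handled exactly like the $Y$-block, from $a\phi_{11}=\Id_G-b\phi_{21}$ and $\phi_{11}a=\Id_G-\phi_{12}c$. This is the ``symmetry'' the paper invokes in the proof of Theorem \ref{reducibility_atomic}.
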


\begin{proof}
Let $f\in \AA^n(X\vee Y)$. From Lemma \ref{red}(i), we have $f_{XX}\in \AA^{n-1}(X)$ and $f_{YY}\in \AA^{n-1}(Y)$. It is sufficient to show that $(f_{XX})_*\in \Aut(H_n(X))$ and $(f_{YY})_*\in \Aut(H_n(Y))$. By Lemma \ref{prime_nilpotent}, we see that $X$ and $Y$ are homologically $n$-distant. Suppose $\phi\colon H_n(X)\oplus H_n(Y)\to H_n(X)\oplus H_n(Y)$ is an inverse of the endomorphism $f_*\colon H_n(X)\oplus H_n(Y)\to H_n(X)\oplus H_n(Y)$. Therefore, $M_n(f)\cdot M(\phi) = M(\Id_{H_n(X)\oplus H_n(Y)})$ implies that 
\begin{align*}
&(f_{XX})_*\phi_{11} + (f_{XY})_*\phi_{21} = \Id_{H_n(X)}\\
&(f_{YX})_*\phi_{12} + (f_{YY})_*\phi_{22} = \Id_{H_n(Y)}\\
&(f_{XX})_*\phi_{12} + (f_{XY})_*\phi_{22} = 0.
\end{align*} 
For the homomorphism $\phi_{12}\colon H_n(Y)\to H_n(X) = G$, then there exists a map $\psi\colon Y\to X = K(G,n)$ such that $\psi_* = \phi_{12}.$ Using the equality above, together with the fact that $X,Y$ are homologically $n$-distant, we obtain $(f_{YY})_*\phi_{22} = \Id_{H_n(Y)} - (f_{YX}\circ \psi)_*\in \Aut(H_n(Y))$. It follows that $(f_{YY})_*\in \Aut(H_n(Y))$ and $\phi_{22}\in \Aut(H_n(Y))$. By an argument analogous to that used in the proof of Theorem \ref{redu_radical}, we have 
$$(f_{XX})_*\Big(\phi_{11} - \phi_{12} \phi^{-1}_{22} \phi_{21}\Big) = \Id_{H_n(X)} ~\text {and } \Big(\phi_{11} - \phi_{12} \phi^{-1}_{22} \phi_{21}\Big)(f_{XX})_* = \Id_{H_n(X)}.$$ Consequently, $(f_{XX})_*\in \Aut(H_n(X))$. Hence, $f_{XX}\in \AA^n(X)$ and $f_{YY}\in\AA^n(Y)$. This completes the proof.
\end{proof}

\begin{example}\label{exam:redu_Eilenberg_Real Projective}
For $n\geq 1$, consider the suspension $\Sigma \mathbb{R}P^{2n}$ of real projective space and the Eilenberg–Mac Lane space $K(G,2n)$, where $G$ is an abelian group except $\ZZ/{2\ZZ}$. We know that $H_{2n}(\Sigma \mathbb{R}P^{2n})\cong H_{2n-1}(\mathbb{R}P^{2n})\cong \ZZ/{2\ZZ}$. Therefore, any self-map in $\AA^{2n}(K(G,2n)\vee \Sigma \mathbb{R}P^{2n})$ is $2n$-reducible by Proposition \ref{nilpotent_reducibility_homdis}. In fact, a stronger statement holds. For every $k\geq 1$, each self-map in $\AA^k(K(G,2n)\vee \Sigma \mathbb{R}P^{2n})$ is $k$-reducible. This follows from the fact that $H_i(\Sigma \mathbb{R}P^{2n}) \cong 0$ for all $i\geq 2n+1$ together with Lemma \ref{red}(i). 

Indeed, if either $n$ is odd or $G$ is an abelian group other than $\ZZ/{2\ZZ}$, then every self-map in $\AA^n(K(G,n)\vee \Sigma \mathbb{R}P^{2m})$ is $n$-reducible.
\end{example}

In the following lemma, we establish relations among the homotopical, homological, and cohomological notions of $n$-distance. This result extends the earlier statements appearing in \cite[Corllary 4.11, Corollary 4.12]{PPRSE}.
\begin{lemma}\label{lem:nilpotency}
Let $X$ and $Y$ be CW-complexes of finite types. Then the  following statements hold:
\begin{enumerate}[(i)]
\item $X$ and $Y$ are homologically $n$-distant if and only if they are homotopically $n$-distant.

\item If $X$ and $Y$ are homologically and homotopically $n$-distant with coefficients in every prime field $\mathbb{F}$, then they are homologically $n$-distant.

\item If $X$ and $Y$ are homologically $n$-distant, then they are cohomologically $n$-distant. In particular, they are cohomologically $n$-distant with coefficients in any prime field $\mathbb{F}$.

\item If $X$ and $Y$ are cohomologically $n$-distant or cohomologically $n$-distant with coefficients in every prime field $\mathbb{F}$, then they are homologically $(n-1)$-distant.

\item If $X$ and $Y$ are homotopically $n$-distant, then they are homotopically $(n-1)$-distant with coefficients in any prime field $\mathbb{F}$.
\end{enumerate}
\end{lemma}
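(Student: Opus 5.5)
The plan is to reduce each part to one fact about a self-map $h=\beta\circ\alpha\colon X\to X$ with $\alpha\colon X\to Y$, $\beta\colon Y\to X$: nilpotency of its induced endomorphism in a given theory and range of degrees. The tools are the Whitehead/Hurewicz-type comparison in Lemma \ref{leq}, the universal coefficient theorems, and the finite-type hypothesis. By the symmetry remark after the definition, it is enough to treat self-maps of $X$ that factor through $Y$.

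For (i) I would follow \cite[Corollary 4.11]{PPRSE}. Suppose $h_*$ is nilpotent on $H_i(X)$ for $i\le n$, so $(h^N)_*=0$ there. Replace $h$ by $h^N$, which still factors through $Y$. Then $h^N$ induces zero on $\widetilde H_i$ for $i\le n$. Pass to Postnikov sections: $h^N$ induces a map of the Postnikov stage $P_nX$ that is zero on homology up to degree $n$. A map that kills homology up to degree $n$ on a simply connected space kills $\pi_{\conn(X)+1}$ by Hurewicz. Inducting up the Postnikov tower, using the Serre exact sequence in the range $i\le n$, then shows some further power is zero on $\pi_i$ for $i\le n$. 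The converse runs the same way, with homology in place of homotopy, along a Whitehead-type argument on the Hurewicz spectral sequence. This induction is where I expect the main difficulty. The nilpotency exponent grows with each stage, and one must check that composites of maps that are zero on successive subquotients of a filtration are zero. Finiteness of the range $i\le n$ is what makes the bound finite.

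For (ii) and (iii), use the universal coefficient sequences. These are natural and split non-naturally, so an endomorphism of $H_i(X)$ that is nilpotent on both $H_i(X)$ and $H_{i-1}(X)$ gives a 2-step filtration of $H^i(X)=\Hom(H_i,\ZZ)\oplus \mathrm{Ext}(H_{i-1},\ZZ)$ on which $h^*$ is nilpotent on each graded piece, hence nilpotent. The same holds for $H_i(X;\mathbb{F})$. This proves (iii). For (ii), go the other way with finite type: if $h_*$ is nilpotent on $H_i(X;\mathbb{F})$ for every prime field, then $h_*$ is nilpotent on $H_i(X)\otimes\mathbb{Q}$ and on $H_i(X)\otimes\mathbb{F}_p$ and $\mathrm{Tor}$. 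A finitely generated abelian group endomorphism that is nilpotent rationally and mod every $p$ is nilpotent. To see this, filter by the torsion subgroup and the $p$-primary parts, and use $p$-adic filtrations of finite $p$-groups. Only finitely many primes matter.

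For (iv) and (v), the universal coefficient sequence loses the top $\mathrm{Ext}$ term: $H^n$ only controls $\Hom(H_n)$ and $\mathrm{Ext}(H_{n-1})$. So from nilpotency on $H^i$ for $i\le n$ one recovers nilpotency on the free part of $H_i$ for $i\le n$ and on the torsion of $H_i$ only for $i\le n-1$. This gives homological $(n-1)$-distance via dual filtration arguments, and mod-$p$ coefficients are handled the same way. Part (v) combines (i) in degree shifted form with the short exact sequences $0\to H_i\otimes\mathbb{F}\to H_i(-;\mathbb{F})\to\mathrm{Tor}(H_{i-1},\mathbb{F})\to0$, which again require $i-1\le n-1$. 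In all of these parts, nilpotency on the two ends of a natural short exact sequence implies nilpotency in the middle, with the exponents adding.
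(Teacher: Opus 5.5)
\textbf{(v) is a genuine gap.} Your arguments for (ii), (iii) and (iv) are sound. They follow essentially the paper's route: the universal coefficient sequences, the fact that nilpotency passes through natural short exact sequences \cite[Lemma 3.1]{PPRSE}, and the prime-by-prime argument behind \cite[Theorem 4.10]{PPRSE}. One small correction in (iii): the coefficient case should use $0\to \mathrm{Ext}(H_{i-1}(X),\mathbb{F})\to H^i(X;\mathbb{F})\to \Hom(H_i(X),\mathbb{F})\to 0$ rather than the homology sequence, but the argument is identical.

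The conclusion of (v) concerns homotopy groups with coefficients $\pi_i(X;\mathbb{F})$, and your argument never reaches them. Combining (i) with $0\to H_i\otimes\mathbb{F}\to H_i(-;\mathbb{F})\to \mathrm{Tor}(H_{i-1},\mathbb{F})\to 0$ only shows that $h_*$ is nilpotent on $H_i(X;\mathbb{F})$. To get back to $\pi_i(X;\mathbb{F})$ you would need an $\mathbb{F}$-coefficient analogue of (i), which you neither state nor prove.

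The missing ingredient, and the one the paper uses, is the natural universal coefficient sequence for homotopy groups with coefficients \cite[Proposition 4H.2]{AHAT}. For $\pi_i(X;\mathbb{F})=[M(\mathbb{F},i),X]$, the cofibration sequence of the Moore space gives $0\to \mathrm{Ext}(\mathbb{F},\pi_{i+1}(X))\to \pi_i(X;\mathbb{F})\to \Hom(\mathbb{F},\pi_i(X))\to 0$, natural in $X$. Since $\mathrm{Ext}(\mathbb{F},-)$ and $\Hom(\mathbb{F},-)$ are additive, nilpotency of $h_\#$ on $\pi_i(X)$ and $\pi_{i+1}(X)$ passes to both ends, hence to $\pi_i(X;\mathbb{F})$. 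No detour through homology is needed. With this indexing the argument requires $i+1\le n$, which is where the drop to $n-1$ comes from. Your explanation of the drop is also mistaken: for $i\le n$ one automatically has $i-1\le n-1$, so the $\mathrm{Tor}$ term costs nothing.

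\textbf{(i) is incomplete at the step you flag.} The paper simply cites \cite[Theorem 3.5]{PPRSE}; if you want to prove it yourself, more is needed. In $K(\pi_m,m)\to P_mX\to P_{m-1}X$, the Serre exact sequence places $\pi_m(X)=H_m(K(\pi_m,m))$ between $H_{m+1}(P_{m-1}X)$ and $H_m(P_mX)\cong H_m(X)$. So your homology-to-homotopy induction needs $h$ to be nilpotent on $H_{m+1}(P_{m-1}X)$. That is the homotopy-to-homology direction for a space with finitely many homotopy groups, in a degree above its top homotopy group.

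That direction cannot come from the Serre exact sequence. You need the full Serre spectral sequence, with $h$ acting on $E^2_{p,q}=H_p(P_{m-1}X;H_q(K(\pi_m,m)))$. You also need the observation that if $\varphi^N=0$ then $K(\varphi,m)^N\simeq K(0,m)$ is null, so $h$ is nilpotent on $\widetilde H_q(K(\pi_m,m))$ for $q>0$. The row $q=0$ is handled by induction, and nilpotency then passes to $E^\infty$ and to $H_*(P_mX)$ through the finite filtration. With these two ingredients the two directions bootstrap each other up the tower.
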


\begin{proof}
\begin{enumerate}[(i)]
\item This follows directly from \cite[Theorem 3.5]{PPRSE}.

\item For a prime field $\mathbb{F}$, the spaces $X$ and $Y$ are homologically $n$-distant with coefficients in $\mathbb{F}$ if and only if they are homotopically $n$-distant with coefficients in 
$\mathbb{F}$, by the same argument as in part (i). The remaining assertion follows from \cite[Theorem 4.10]{PPRSE}.

\item By the universal coefficient theorems for cohomology, together with \cite [Lemma 3.1]{PPRSE}, the conclusion follows. 
 
\item If $X$ and $Y$ are cohomologically $n$-distant with coefficient in all prime fields $\mathbb{F}$, then they are cohomologically $n$-distant by an argument similar to that used in the proof of \cite[Theorem 4.10]{PPRSE}. Moreover, applying \cite[Theorem 12, p.248]{Spanier} together with \cite[Lemma 3.1]{PPRSE} yields the desired result.

\item Finally, \cite[Proposition 4H.2]{AHAT}, together with \cite[Lemma 3.1]{PPRSE} implies that $X$ and $Y$ are homotopically $(n-1)$-distant with coefficient in any prime field $\mathbb{F}$. This completes the proof.
\end{enumerate}
\end{proof}

\begin{example}
As an application of Lemma \ref{lem:nilpotency}, we get the following observations: 
\begin{enumerate}[(i)]
\item The CW-complexes $\mathbb{C}P^3$ and $S^2\times S^4$ discussed in Example \ref{redu_example}(iv), are cohomologically $n$-distant for all $n\geq 1$. Consequently, they are also homotopically and homologically $n$-distant for all $n\geq 1$.

\item Consider the Eilenberg MacLance spaces $K(\ZZ/{p^r\ZZ},m)$ and $K(\ZZ/{q^s\ZZ},m')$, where $p,q$ are primes, not necessarily distinct. If at least one of the pairs $(m,m'), (p,q)$, $(r,s)$ is distinct, then these Eilenberg–MacLane spaces are homotopically $n$-distant for all $n\geq 1$.  Thus, they are also homologically and cohomologically $n$-distant for all $n\geq 1$.
\end{enumerate}
\end{example}

For an element $\alpha$ in a cohomology ring $H^*(X;R)$, we define $\cp(\alpha)$ to be the maximal integer $m$ such that $\alpha^m\neq 0$, where $R$ is a commutative ring. 

The following theorem generalizes Lemma \ref{red} and Theorem \ref{thm:redu_cohom}.
\begin{theorem}\label{thm_redu_multi_generators}
Let $X$ and $Y$ be finite type CW-complexes. Suppose that for every prime field $\mathbb{F}$, the cohomology rings $H^*(X;\mathbb{F})$ and $H^*(Y;\mathbb{F})$ are polynomial rings generated by finitely many indeterminates, say $\alpha_1,\ldots,\alpha_n$ and $\beta_1,\ldots,\beta_m$, respectively. Assume that for every pair of indices $i$ and $j$, either $\deg(\alpha_i) \neq \deg(\beta_j)$ or $\cp(\alpha_i)\neq \cp(\beta_j)$. Then any self-map in $\Aut(X\vee Y)$ is reducible. Moreover, the spaces $X$ and $Y$ are cohomologically distant. (Here, the degree sequence $\{\deg(\alpha_i)\}^n_{i=1}$  and $\{\deg(\beta_j)\}^m_{j=1}$ are assumed to be in non-decreasing order).
\end{theorem}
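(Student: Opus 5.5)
The plan is to prove cohomological distance first, working one prime field $\mathbb{F}$ at a time. Reducibility of $\Aut(X\vee Y)$ will then follow from the matrix argument in the proof of Lemma \ref{red} and Theorem \ref{redu_radical}, transported to cohomology, and lifted to integral coefficients by Lemma \ref{lem:hom_prime fields} and Proposition \ref{prop:redu_finite field}.

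\textbf{Step 1: maps between $X$ and $Y$ are degree-raising on generators.} Fix a prime field $\mathbb{F}$ and maps $g\colon X\to Y$, $h\colon Y\to X$. Then $g^*(\beta_j)$ is a polynomial in the $\alpha_i$ of degree $\deg(\beta_j)$, and $g^*$ is a ring homomorphism. If $\deg(\beta_j)<\deg(\alpha_1)$, then $g^*(\beta_j)=0$. Otherwise I split $g^*(\beta_j)$ into a part that is linear in those $\alpha_i$ with $\deg(\alpha_i)=\deg(\beta_j)$ and a decomposable part. Suppose the linear coefficient $\lambda_i\in\mathbb{F}$ of some $\alpha_i$ is nonzero. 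Then compare $g^*(\beta_j^{\cp(\beta_j)+1})=0$ with $\cp(\alpha_i)$ exactly as in Theorem \ref{thm:redu_cohom}, using $l^m\beta^m=0\Rightarrow l=0$ over a field. I expect this to force $\lambda_i=0$ whenever $\cp(\alpha_i)\neq\cp(\beta_j)$, at least when the degree is minimal, by projecting to the quotient by decomposables and by the ideal generated by other generators. The hypothesis guarantees that $\cp$ differs whenever the degrees agree. So the linear parts vanish, and $g^*$ sends each generator into the ideal $D_X$ of decomposable elements. Symmetrically, $h^*$ sends each generator into $D_Y$.

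\textbf{Step 2: nilpotency.} Take a self-map $\theta=g\circ h$ of $Y$ factoring through $X$. Then $\theta^*=h^*g^*$ sends the augmentation ideal $\bar H^*(Y;\mathbb{F})$ into $D_Y=\bar H\cdot\bar H$. Since $\theta^*$ is multiplicative, $(\theta^*)^r$ sends $\bar H$ into $\bar H^{\,2^r}$, so its word length grows without bound. In each fixed degree $d$, any product of more than $d$ positive-degree classes vanishes. Hence $(\theta^*)^r$ is zero on $H^d(Y;\mathbb{F})$ for $r$ large, so $\theta^*$ is nilpotent in each degree. This gives cohomological distance over every $\mathbb{F}$.

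\textbf{Step 3: integral distance.} I pass to integral cohomological distance by the argument cited in Lemma \ref{lem:nilpotency}(iv), that is, \cite[Theorem 4.10]{PPRSE} together with finite type.

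\textbf{Step 4: reducibility.} Let $f\in\Aut(X\vee Y)$. Then $f^*$ is invertible in every degree with $\mathbb{F}$ coefficients. By Step 2, the compositions $f_{YX}\circ\bar\psi\circ f_{XY}$ induce nilpotent endomorphisms for any $\bar\psi$. So I want the cohomological analogue of Theorem \ref{redu_radical}. The problem is that its hypothesis demands radicality, not nilpotency, and Proposition \ref{prop_redu_end} needs a commutativity assumption.

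To avoid both, I will instead argue with the block matrix $M^d(f)$ directly. Write $f^*$ in degree $d$ as $A+N$, where $A=\mathrm{diag}(f_{XX}^*,f_{YY}^*)$ and $N$ is the off-diagonal part. Using Step 1, I will filter $H^d(X;\mathbb{F})\oplus H^d(Y;\mathbb{F})$ by word length in the generators. In this filtration $N$ strictly raises the filtration, because it sends generators into decomposables, and it is multiplicative on the pieces coming from $X$ and from $Y$. On the associated graded, $f^*$ therefore agrees with $A$. An endomorphism of a finite-dimensional space that preserves a finite filtration is invertible iff its associated graded is. Since both $f^*$ and $A$ preserve this filtration and share the same associated graded, $A$ is invertible whenever $f^*$ is. This gives $f_{XX},f_{YY}$ isomorphisms on $H^*(-;\mathbb{F})$ for every $\mathbb{F}$. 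Lemma \ref{lem:hom_prime fields} then yields integral cohomology isomorphisms, and by Lemma \ref{leq} (or Whitehead), self-equivalences.

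\textbf{Main obstacle.} The delicate point is Step 1: showing that the linear coefficients vanish when degrees agree but cup-lengths differ, in the presence of several generators of the same degree. This requires choosing the quotient ring so that the argument of Theorem \ref{thm:redu_cohom} applies to a single generator. I would first try ordering the generators of equal degree by $\cp$ and inducting.
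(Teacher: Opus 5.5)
Your Step~1 is not just the delicate point; as stated it is false, and Steps~2 and~4 both depend on it. The relation $\beta_j^{\cp(\beta_j)+1}=0$ forces the coefficient $\lambda_i$ of $\alpha_i$ in $g^*(\beta_j)$ to vanish only when $\cp(\alpha_i)>\cp(\beta_j)$. If $\cp(\alpha_i)<\cp(\beta_j)$, then $(\lambda_i\alpha_i)^{\cp(\beta_j)+1}=0$ holds automatically and nothing constrains $\lambda_i$.

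Concretely, $X=S^2=\mathbb{C}P^1$ and $Y=\mathbb{C}P^2$ satisfy all the hypotheses: one generator each, in degree $2$, with cup-lengths $1$ and $2$. Yet the inclusion $g\colon\mathbb{C}P^1\to\mathbb{C}P^2$ has $g^*(\beta)=\alpha$, which is indecomposable. What is actually true, and what the paper extracts in Cases I--III, is a triangularity statement. On indecomposables, any map between $X$ and $Y$ sends a generator to a combination of generators of \emph{strictly smaller} cup-length; strictness comes from your hypothesis.

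When one degree carries several generators with interleaved cup-lengths, neither $g^*$ nor $h^*$ need kill indecomposables, and neither need their composite. Take $X=S^2\times\mathbb{C}P^3$ and $Y=\mathbb{C}P^2\times\mathbb{C}P^4$; all generators lie in degree $2$, with cup-lengths $1,3$ and $2,4$. Maps built from the inclusions $\mathbb{C}P^3\subset\mathbb{C}P^4$ and $\mathbb{C}P^2\subset\mathbb{C}P^3$ give a self-map $\theta$ of $Y$ through $X$ with $\theta^*(\beta_2)=\beta_1$. So the premise of Step~2 fails. It can be repaired as follows. The linear part of $\theta^*$ strictly lowers cup-length, so it is nilpotent on indecomposables in each degree. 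Hence some power $\theta^N$ maps all generators of degree $\le d$ into decomposables. Your word-length argument applied to $\theta^N$ then gives nilpotency on $H^d(Y;\mathbb{F})$.

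Step~4 fails for the same reason: the off-diagonal block $N$ need not raise word length. In $S^2\vee\mathbb{C}P^2$, define $f$ by $f\circ\iota_{S^2}=\iota_{S^2}+\iota_{\mathbb{C}P^2}\circ g$, using the co-H-structure of $S^2$, and $f\circ\iota_{\mathbb{C}P^2}=\iota_{\mathbb{C}P^2}$. Then $f^*(\alpha)=\alpha$, $f^*(\beta)=\alpha+\beta$ and $f^*(\beta^2)=\beta^2$, so $f\in\Aut(S^2\vee\mathbb{C}P^2)$. But $N(\beta)=\alpha$ has word length one, so the associated graded of $f^*$ is not that of $A$.

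You could instead filter the indecomposables by cup-length; by hypothesis each graded piece then lies wholly in $X$ or wholly in $Y$. The detour is unnecessary, though. Your worry about radicality comes from Theorem~\ref{redu_radical}, where the inverse matrix is only algebraic. For $f\in\Aut(X\vee Y)$ the inverse is induced by a homotopy inverse $\bar f$. Then $f^*\bar f^*=\Id$ gives $(f_{XX})^*(\bar f_{XX})^*=\Id-(\bar f_{XY}\circ f_{YX})^*$ in each degree. The last term comes from a self-map of $X$ factoring through $Y$, so it is nilpotent once the corrected Step~2 is in place. By finite-dimensionality $(f_{XX})^*$ is invertible, and likewise $(f_{YY})^*$. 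This is how the paper concludes in Case~III.
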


\begin{proof}
Suppose that the first coincidence in the degrees of the generators occurs at $i = i_1,\ldots, i_r$ and $j = j_1,\ldots, j_l$, i.e.,
\begin{equation}\label{eq:generator_equality}
\deg(\alpha_i) = \deg(\beta_j) = u, \text{ for } 1\leq i_1\leq i\leq i_r\leq n, 1\leq j_1\leq j\leq j_l\leq m.  
\end{equation}
Assume that there exist positive integers $m_i$  and $m'_j$ such that $\cp(\alpha_i) = m_i$ and $\cp(\beta_j) = {m'_j}$, with $m_i\neq m'_j$. We remark that it may also happen that certain products of the generators $\beta_j$ or the products of the generators $\alpha_i$ have degrees $u$; this situation is treated separately in Case IV. For clarity of exposition, we proceed in a linear manner in the first three cases.

\noindent \textbf{Case-I}:  Let $m_i< m'_j$ for all $i_1\leq i\leq i_r$ and $j_1\leq j\leq j_l$. Note that some of the values $m'_j$ may also be infinite. For a homomorphism $\phi\colon H^*(X;\mathbb{F})\to H^*(Y;\mathbb{F})$, we have $\phi(\alpha_i) = b_{ij_1}\beta_{j_1} + \cdots + b_{ij_l}\beta_{j_l}$, where $b_{ij}\in \ZZ$ for $i_1\leq i\leq i_r$ and $j_1\leq j\leq j_l$. Using the ring homomorphism, we obtain $$0 = \phi(\alpha^{m_i+1}_i) = (b_{ij_1}\beta_{j_1} + \cdots + b_{ij_l}\beta_{j_l})^{m_i+1} = \sum_{n_{j_1}+\cdots+n_{j_l}=m_i+1}\frac{(m_i+1)!}{n_{j_1}!\cdots n_{j_l}!}\prod^{j_l}_{j=j_1}(b_{ij}\beta_j)^{n_j}.$$ This implies that $b^{m_i+1}_{ij}\beta^{m_i+1}_j = 0$ for $j=j_1,\ldots,j_l$. It follows that $b^{m_i+1}_{ij}\equiv 0 \pmod{p}$ and so $b_{ij} \equiv 0 \pmod{p}$ for $\mathbb{F} = \mathbb{F}_p$. In other case $b_{ij} = 0$, whenever $\mathbb{F} = \mathbb{Q}$. Therefore, $\phi(\alpha_i) = 0$ for $i_1\leq i\leq i_r$. 

\vspace{0.5 em}

\noindent \textbf{Case-II}: Suppose that $m_i > m'_j$ for all $i_1\leq i\leq i_r$ and $j_1\leq j\leq j_l$. Note that some of the values $m_i$ may also be infinite. By an argument analogous to that given in Case-I, it follows that a ring homomorphism $\psi\colon H^*(Y;\mathbb{F})\to H^*(X;\mathbb{F})$ gives $\psi(\beta_j) = 0$ for $j_1\leq j\leq j_l$. 

Therefore, given an isomorphism $f^*\colon H^u(X;\mathbb{F})\oplus H^u(Y;\mathbb{F})\to H^u(X;\mathbb{F})\oplus H^u(Y;\mathbb{F})$ induced by a self-map $f\colon X\vee Y\to X\vee Y$, we conclude that $(f_{XX})^*\in \Aut(H^u(X;\mathbb{F}))$ and $(f_{YY})^*\in \Aut(H^u(Y;\mathbb{F}))$ combining the above two cases, together with Lemma \ref{red}(iii).

Indeed, if all the generators of the cohomology rings and their corresponding cup length relations are precisely those described in the preceding two cases, then every self-map in $\AAA_k(X\vee Y;\mathbb{F})$ is $k$-reducible for $k\geq 1$.

\vspace{0.5 em}

\noindent \textbf{Case-III}: Assume that the collections of integers $\{m_i\}^{i_r}_{i=i_1}$ and $\{m'_j\}^{j_l}_{j=j_1}$ are arranged in non-decreasing order; otherwise, we may reorder them without loss of generality. Suppose further that the integers $m_i$ and $m'_j$ satisfy a mixed system of inequalities. Without loss of generality, we may assume that $$m_{i_1}< m'_{j_1}<m_{i_2}<m'_{j_2}<m_{i_3}\leq \cdots \leq m_{i_r}< m'_{i_3}\leq \cdots \leq m'_{j_l}.$$ For a homomorphism $\phi\colon H^*(X;\mathbb{F})\to H^*(Y;\mathbb{F})$, proceeding with the similar argument as in Case-I, we obtain 
$$\phi(\alpha_i) = 
\begin{cases}
    0 & \text{ for } i = i_1,\\
    b_{i1}\beta_{j_1} & \text{ for } i = i_2,\\
    b_{i1}\beta_{j_1} + b_{i2}\beta_{j_2} & \text{ for } i_3\leq i\leq i_r.
\end{cases}
$$
Further, for a homomorphism $\psi\colon H^*(Y;\mathbb{F})\to H^*(X;\mathbb{F})$, we get 
$$\psi(\beta_j) = 
\begin{cases}
    a_{1j}\alpha_{i_1}, & \text{ for } j = j_1,\\
    a_{1j}\alpha_{i_1} + a_{2j}\alpha_{i_2} & \text{ for } j = j_2,\\
    a_{1j}\alpha_{i_1} + \cdots + a_{rj}\alpha_{i_r} & \text{ for } j_3\leq j\leq j_l.
\end{cases}
$$ 
This implies  that $\phi\circ \psi\colon H^u(Y;\mathbb{F})\to H^u(Y;\mathbb{F})$ and $\psi\circ \phi\colon H^u(X;\mathbb{F})\to H^u(X;\mathbb{F})$ are nilpotent endomorphisms. Let $f^*\colon H^u(X;\mathbb{F})\oplus H^u(Y;\mathbb{F})\to H^u(X;\mathbb{F})\oplus H^u(Y;\mathbb{F})$ be an isomorphism induced by a self-map $f$ in $\Aut(X\vee Y)$. Then there exists an isomorphism $\bar{f}^*\colon H^u(X;\mathbb{F})\oplus H^u(Y;\mathbb{F})\to H^u(X;\mathbb{F})\oplus H^u(Y;\mathbb{F})$ such that $M^u(f)\cdot M^u(\bar{f}^*) = M(\Id_{H^u(X;\mathbb{F})\oplus H^u(Y;\mathbb{F})})$, where $\bar{f}$ is the homotopy inverse of $f$. It follows that $(f_{XX})^* (\bar{f}_{XX})^* + (f_{YX})^*(\bar{f}_{XY})^* = \Id_{H^u(X;\mathbb{F})}$ and $(f_{XY})^*(\bar{f}_{YX})^* + (f_{YY})^*(\bar{f}_{YY})^* = \Id_{H^u(Y;\mathbb{F})}$. Since the endomorphisms $(f_{YX})^*(\bar{f}_{XY})^*$ and $(f_{XY})^*(\bar{f}_{YX})^*$ are nilpotent, this yields $$(f_{XX})^* \Phi^{11}_u  = \Id_{H^u(X;\mathbb{F})} - (f_{YX})^*\Phi^{12}_u\in \Aut(H^u(X;\mathbb{F}))$$ and $$(f_{YY})^*\Phi^{22}_u = \Id_{H^u(Y;\mathbb{F})} - (f_{XY})^*\Phi^{21}_u\in \Aut(H^u(Y;\mathbb{F})).$$  Hence, $(f_{XX})^*\in \Aut(H^u(X;\mathbb{F}))$ and $(f_{YY})^*\in \Aut(H^u(Y;\mathbb{F}))$, since $H^u(X;\mathbb{F})$ and $H^u(Y;\mathbb{F})$ are finitely generated.

\vspace{0.5 em}

\noindent \textbf{Case-IV}: If possible, let 
\begin{equation}\label{eq:generator_product_equality}
\deg(\alpha_i) = \deg(\beta_j) = \deg(\alpha_{i_0}\alpha_{i'_0}) = \deg(\beta_{j_0}\beta_{j'_0}) = u,
\end{equation}
where $1\leq i_0,i'_0<i_1,~1\leq j_0,j'_0<j_1$ and $i_1\leq i\leq i_r,~ j_1\leq j\leq j_l$. Thus, $\deg(\alpha_{i_0})<u,~\deg(\alpha_{i'_0})<u$ and $\deg(\beta_{j_0})<u,~\deg(\beta_{j'_0})<u$. Let $\phi\colon H^*(X;\mathbb{F})\to H^*(Y;\mathbb{F})$ and $\psi\colon H^*(Y;\mathbb{F})\to H^*(X;\mathbb{F})$ be two homomorphisms. By the minimality assumption on the equal degree in \eqref{eq:generator_equality}, we obtain $\phi(\alpha_{i_0}) = \phi(\alpha_{i'_0}) = 0$ and $\psi(\beta_{j_0}) = \psi(\beta_{j'_0}) = 0$. Observe that elements $\beta_{j_0}\beta_{j'_0}$ and $\alpha_{i_0}\alpha_{i'_0}$ lie in the images of some homomorphisms $\phi(\alpha_i)$ and  $\psi(\beta_j)$, respectively. Consequently, together with the three cases discussed above and the equalities $\phi(\alpha_{i_0}\alpha_{i'_0}) = 0$ and $\psi(\beta_{j_0}\beta_{j'_0}) = 0$, it follows that the induced endomorphisms $\phi\circ \psi\colon H^u(Y;\mathbb{F})\to H^u(Y;\mathbb{F})$ and $\psi\circ \phi\colon H^u(X;\mathbb{F})\to H^u(X;\mathbb{F})$ are nilpotent. 

Further, suppose that a relation analogous to \eqref{eq:generator_product_equality} occurs to a higher degree, say  $v$. In this situation, the elements, say $\phi(\alpha_{i_0}),~ \phi(\alpha_{i'_0}),~\psi(\beta_{j_0})$ and $\psi(\beta_{j'_0})$ can be expressed as sums of elements of strictly lower degree than those of $\alpha_{i_0}\alpha_{i'_0}$ and $\beta_{j_0}\beta_{j'_0}$, which need not vanish identically. By induction on the degree, there exist positive integers $s$ and $s'$ such that $(\psi\circ \phi)^s(\alpha_{i_0}\alpha_{i'_0}) = (\psi\circ \phi)^s(\alpha_{i_0}) (\psi\circ \phi)^s(\alpha_{i'_0})= 0$ and $(\phi\circ \psi)^{s'}(\beta_{j_0}\beta_{j'_0}) = (\phi\circ \psi)^{s'}(\beta_{j_0}) (\phi\circ \psi)^{s'}\beta_{j'_0}) = 0$. Consequently, $\phi\circ \psi\colon H^v(Y;\mathbb{F})\to H^v(Y;\mathbb{F})$ and $\psi\circ \phi\colon H^v(X;\mathbb{F})\to H^v(X;\mathbb{F})$ are nilpotent. Note that if, instead of two factors, relation \eqref{eq:generator_product_equality} involves products of more than two generators, the argument proceeds in an entirely analogous manner. Moreover, arguing as in Case III, for any self-map $f\in \Aut(X\vee Y)$, the induced homomorphisms $(f_{XX})^*\in \Aut(H^v(X;\mathbb{F})$
and $(f_{YY})^*\in \Aut(H^v(Y;\mathbb{F})$.

Combining the analysis of all four cases with Lemma \ref{lem:hom_prime fields}, we conclude that every self-map in $\Aut(X\vee Y)$ is reducible.

In addition, the spaces $X$ and $Y$ are cohomologically $n$-distance for all $n\geq 1$. Hence, they are also homotopically and homologically distant by Lemma \ref{lem:nilpotency}.
\end{proof}

%Further, fixed an index $i_0\in \{1,\ldots,n\}$. Suppose $\deg(\alpha_{i_0}) \neq \deg(\beta_j)$ for all $j$. Thus, $\Hom\big(H^{i_0}(X;\mathbb{F}),H^{i_0}(Y;\mathbb{F})\big) = 0$ and this implies that any isomorphism $f^*\colon H^{i_0}(X;\mathbb{F})\oplus H^{i_0}(Y;\mathbb{F})\to H^{i_0}(X;\mathbb{F})\oplus H^{i_0}(Y;\mathbb{F})$ implies $(f_{XX})^*\in \Aut(H^{i_0}(X;\mathbb{F}))$ and $(f_{YY})^*\in \Aut(H^{i_0}(Y;\mathbb{F}))$ by Lemma \ref{red}(ii).

\begin{example}
Consider the special orthogonal group $SO(n)$, where $n\geq 2$. For a prime field $\mathbb{F}$, the cohomology ring is given by 
$$H^*\big(SO(n);\mathbb{F}\big)\cong
\begin{cases}
   \mathbb{F}_2[\alpha_1,\ldots,\alpha_{\lfloor\frac{n}{2}\rfloor}]/(\alpha^{r_1}_1,\ldots,\alpha^{r_{\lfloor\frac{n}{2}\rfloor}}_{\lfloor\frac{n}{2}\rfloor}), & \deg(\alpha_i) = 2i-1, \text{ if } \mathbb{F}=\mathbb{F}_2,\\
   \Lambda_{\mathbb{F}}[\alpha_1,\ldots,\alpha_{\lfloor\frac{n}{2}\rfloor}], & \deg(\alpha_i) = 4i-1, \text{ if } \mathbb{F}\neq \mathbb{F}_2,\\ & \qquad \text{ and } n \text{ is odd},\\
   \Lambda_{\mathbb{F}}[\alpha_1,\ldots,\alpha_{\lfloor\frac{n}{2}\rfloor-1},\alpha], & \deg(\alpha_i) = 4i-1, \text{ if } \mathbb{F}\neq \mathbb{F}_2,\\ & \qquad \text{ and } n \text{ is even},
\end{cases}
$$
where $\deg(\alpha) = n-1$ and $r_i$ is the smallest power of $2$ such that $(2i-1)r_i\geq n$. Here, $\lfloor\frac{n}{2}\rfloor$ is the greatest integer less than or equal to $\frac{n}{2}$. 

Further, consider the real stiefel manifold $V_t(\mathbb{R}^m)$, where $1\leq t<m$ and $m\geq 2$. The cohomology ring is given by 
$$H^*(V_t(\mathbb{R}^m);\mathbb{F})\cong
\begin{cases}
    \mathbb{F}_2[\beta_1,\ldots, \beta_t]/\mathcal{I}_{m,t}, & \deg(\beta_i) = m-t+i-1,\\ & \qquad \text{ if } \mathbb{F}=\mathbb{F}_2,\\
    \Lambda_{\mathbb{F}}[\beta_1,\ldots, \beta_{\lfloor \frac{t+1}{2}\rfloor}], & 
    \deg(\beta_i) = 2(m-t)+ 4i-3, \\ & \qquad \text {if } \mathbb{F}\neq \mathbb{F}_2, m \text{ odd}, t \text{ even},\\ 
    \Lambda_{\mathbb{F}}[\beta_1,\ldots, \beta_{\lfloor \frac{t+1}{2}\rfloor-1},\beta], & \deg(\beta_i) = 2(m-t)+ 4i-3,\\ & \qquad \text {if } \mathbb{F}\neq \mathbb{F}_2, m \text{ even}, t \text{ odd},\\
    \Lambda_{\mathbb{F}}[\beta',\beta_2,\ldots, \beta_{\lfloor \frac{t+1}{2}\rfloor}], & \deg(\beta_i) = 2(m-t)+4i-5,\\ & \qquad \text{if } \mathbb{F}\neq \mathbb{F}_2, m \text{ odd}, t \text{ odd},\\
    \Lambda_{\mathbb{F}}[\beta',\beta_2,\ldots, \beta_{\lfloor \frac{t+1}{2}\rfloor-1},\beta], & \deg(\beta_i) = 2(m-t)+4i-5,\\ & \qquad \text{if } \mathbb{F}\neq \mathbb{F}_2, m \text{ even}, t \text{ even},
\end{cases}
$$
where $\deg(\beta') = m-t$ and $\deg(\beta) = m-1$. Moreover, $\mathcal{I}_{m,t}$ is an ideal generated by the following relations:
$$\beta^2_i = 
\begin{cases}
\beta_{2i+m-t-1}~ &\text{ if } i\leq \frac{2t-m+1}{2},\\
0~ &\text{ if } i\geq \frac{2t-m+2}{2}.
\end{cases}
$$
Notice that $H^*(V_t(\mathbb{R}^m);\mathbb{F}_2)\cong \mathbb{F}_2(\beta_1,\ldots,\beta_t)/(\beta^2_1,\ldots,\beta^2_t)$ whenever $m\geq 2t$. 

By Theorem \ref{thm_redu_multi_generators}, every self-map in $\Aut\big(SO(n)\vee V_t(\mathbb{R}^m)\big)$ is reducible provided that $(m-t)\geq n\geq 2$. In addition, the manifolds $SO(n)$ and $V_t(\mathbb{R}^m)$ are cohomologically distant. It follows from Lemma \ref{lem:nilpotency} that they are homologically and homotopically distant. More precisely, for $k\geq 1$, any self-map in $\AAA_k\big(SO(n)\vee V_t(\mathbb{R}^m);\mathbb{F}\big)$ is $k$-reducible for all prime field $\mathbb{F}$, by Lemma \ref{red}(iii). This is a consequence of the fact that, whenever $(m-t)\geq n\geq 2$, the degrees of the cohomology generators of $SO(n)$ are strictly smaller than those of $V_t(\mathbb{R}^m)$. Consequently, for $k\geq \frac{n(n-1)}{2}$, every self-map in $\AA^k\big(SO(n)\vee V_t(\mathbb{R}^m)\big)$ is $k$-reducible by \cite[Theorem 12, p.248]{Spanier}.

For $n\geq 2$, taking the classifying space of the special orthogonal group $BSO(n)$. The cohomology ring is given by 
$$H^*\big(BSO(n);\mathbb{F}\big)\cong
\begin{cases}
    \mathbb{F}[\gamma_2, \gamma_3, \ldots, \gamma_n], &\deg(\gamma_i)=i,  \text{ if } \mathbb{F} = \mathbb{F}_2,\\
    \mathbb{F}[\gamma_1,\ldots, \gamma_{\lfloor \frac{n}{2}\rfloor}], &\deg(\gamma_i) = 4i, \text{ if } \mathbb{F}\neq \mathbb{F}_2, n \text{ odd},\\
    \mathbb{F}[\gamma_1,\ldots,\gamma_{\lfloor \frac{n}{2}\rfloor-1},\gamma], &\deg(\gamma_i) = 4i,~\deg(\gamma) =n, \text{ if } \mathbb{F} \neq \mathbb{F}_2, n \text{ even}.
\end{cases}
$$
Note that if $\mathbb{F}\neq \mathbb{F}_2$ and $n$ is an even integer, then the top Pontryagin class $\gamma_{\lfloor \frac{n}{2}\rfloor}$ is related to the Euler class $\gamma$ by the identity $\gamma^2 = \gamma_{\lfloor \frac{n}{2}\rfloor}$. For more details, see \cite{AHAT,MTTL}. 
\end{example}

By Theorem \ref{thm_redu_multi_generators}, any self-map in $\Aut\big(SO(n)\vee BSO(m)\big)$ is reducible for all $n,m$. Indeed, for $k\geq 1$, each self-map in $\AAA_k\big(SO(n)\vee BSO(m);\mathbb{F}\big)$ is $k$-reducible for all $n,m$ and for every prime field $\mathbb{F}$, in view of Case-I of Theorem \ref{thm_redu_multi_generators}. Consequently, if $k\geq \frac{n(n-1)}{2}$, then every self-map in $\AA^k\big(SO(n)\vee BSO(m)\big)$ is $k$-reducible for all $n,m$ by universal coefficient theorem for cohomology together with \cite[Theorem 12, p.248]{Spanier}.

Further, by combining the observation of Case-II of Theorem \ref{thm_redu_multi_generators} with Proposition \ref{prop:redu_finite field}, we deduce that for $k\geq 1$, any self-map in $\AA^k\big(BSO(n)\vee V_t(\mathbb{R}^m)\big)$ is $k$-reducible for all integers $n,m$ and $t$.

In particular, according to the hypotheses of Case-III in Theorem \ref{thm_redu_multi_generators}, any self-map in $\Aut\big((\C^2\times \C^4)\vee V_4(\mathbb{R}^6)\big)$ is reducible. However, for $k\geq 1$, every self-map in $\AA^k\big((\C^3\times \C^9\times \H^2)\vee V_6(\mathbb{R}^9)\big)$ is $k$-reducible by Lemma \ref{red}(iii).

As an additional remark, let $X$ be a simply connected CW-complex of finite type whose cohomology ring possesses no generators of cup-length equal to two. Suppose that $m\geq 2t$. Then, by Case-II of Theorem \ref{thm_redu_multi_generators}, every self-map in $\AA^k\big(X\vee V_t(\mathbb{R}^m)\big)$ is $k$-reducible for all $k\geq 1$.

%----------------------------------------------------------
\sect{Homology self-closeness number}\label{self-closeness number}
For a simply connected, finite-dimensional CW-complex $X$, it is well known that $$\conn(X)+1 \leq\NAA(X)\leq H_{*}\text{-}\dim(X)\leq \dim(X),$$ where $H_{*}\text{-}\dim(X):= \max\{i\geq 0: H_i(X)\neq 0\}$. In this section, we investigate the homology self-closeness number of wedge sums of spaces. In particular, we establish several relationships among the homology self-closeness numbers of wedge sums, products, and smash products of spaces. For a simply connected CW-complex $X$ of finite type, it follows from \cite[Theorem 41]{OYSCFC} that $\NA(X) = \NAA(X)\leq \NAAA(X)$. Thus, in the simply connected setting, the notions of (homotopy) self-closeness and homology self-closeness numbers coincide, provided that the CW-complex is of finite type. In contrast, when 
$X$ is not simply connected, the invariants
$\NA(X)$ and $\NAA(X)$ may differ. For instance, take $\mathbb{R}P^{2n}$. Since $H_{*}\text{-}\dim(\mathbb{R}P^{2n}) = 2n-1$, then $\NAA(\mathbb{R}P^{2n})\leq 2n-1$. Moreover, by \cite[Theorem 13]{OYSCFC}, we have $\NA(\mathbb{R}P^{2n}) =2n$.  

In the following theorem, we investigate the homology self-closeness numbers of wedge sums and cartesian products of CW-complexes in terms of the corresponding invariants of the individual spaces. We emphasize that the method of proof adopted here is different from that used in \cite[Theorem 3.3]{LSPS}.
\begin{theorem}\label{equality_self-closeness_wedge_product}
Let $X$ and $Y$ be CW-complexes. Then 
\begin{enumerate}[(a)]
\item $\NAA(X\vee Y)\geq \max\{\NAA(X), \NAA(Y)\}$.
\item $\NAA(X\times Y)\geq \max\{\NAA(X), \NAA(Y)\}$.
\end{enumerate}
Moreover, assume that $m = \max\{\NAA(X), \NAA(Y)\}$. If each map in $\AA^m(X\vee Y)$ or $\AA^m(X\times Y)$ is $m$-reducible then we get the equality in (a) or (b). 
\end{theorem}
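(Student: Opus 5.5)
For the inequality in (a), I would argue by contradiction using retractions. Let $n = \NAA(X\vee Y)$, and suppose $n < \NAA(X)$. Then there is a self-map $h\in \AA^n(X)$ that is not a homotopy equivalence. Form $g := h\vee \Id_Y\colon X\vee Y\to X\vee Y$. Since $H_i(X\vee Y)\cong H_i(X)\oplus H_i(Y)$ and $g_*$ is diagonal with blocks $h_*$ and $\Id$, we get $g\in \AA^n(X\vee Y) = \Aut(X\vee Y)$. Let $\bar g$ be a homotopy inverse. Then $p_X\circ \bar g\circ \iota_X$ is a homotopy inverse of $h$, because $g\circ\iota_X = \iota_X\circ h$ and $p_X\circ g = h\circ p_X$. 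Hence $h\in\Aut(X)$, a contradiction. The same argument applies symmetrically to $Y$.

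Part (b) follows the same pattern with $f := h\times \Id_Y$. By the Künneth theorem, $f_*$ is an isomorphism in degrees $\le n$ whenever $h_*$ is. To recover an inverse of $h$, I would use $p'_X\circ \bar f\circ \iota'_X$ together with $f\circ\iota'_X = \iota'_X\circ h$ and $p'_X\circ f = h\circ p'_X$. An alternative route is to observe that $h_*$ is a retract of $f_*$ on homology, so $f\in\Aut$ gives $h_*$ an isomorphism in all degrees; then Whitehead's theorem, using simple connectivity, gives $h\in\Aut(X)$. The one delicate point is the Künneth step: the Tor terms $\operatorname{Tor}(H_a(X),H_b(Y))$ with $a+b=i-1\le n$ involve only degrees below $n$ and are natural, so the five lemma on the split Künneth sequence handles them.

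For the equality, set $m=\max\{\NAA(X),\NAA(Y)\}$. By (a) and (b), it suffices to show $\AA^m(X\vee Y)\subseteq\Aut(X\vee Y)$, and likewise for the product. Take $g\in\AA^m(X\vee Y)$. By $m$-reducibility, $g_{XX}\in\AA^m(X)\subseteq \AA^{\NAA(X)}(X)=\Aut(X)$, since the monoids decrease in $k$, and similarly $g_{YY}\in\Aut(Y)$. Then $g$ is reducible in the sense of the matrix $M_i(g)$ having invertible diagonal blocks in every degree. I need $g_*$ to be an isomorphism in all degrees, and this is the main obstacle, since invertible diagonal blocks do not by themselves force an invertible matrix. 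I would try the following. Precompose $g$ with $\bar g_{XX}\vee \bar g_{YY}$ so that the diagonal blocks become the identity. Then consider the composite $(g_{XX}\vee g_{YY})^{-1}\circ g$, whose matrix is $\begin{bmatrix}1 & a\\ b & 1\end{bmatrix}$. Its Schur complement $1-ba$ is an isomorphism in degrees $\le m$ because $g\in\AA^m$. In degrees above $m$, I would argue that the total homological dimension is controlled: either $H_i$ vanishes beyond the relevant range (via $\NAA\le H_*\text{-}\dim$), or one can invoke Pavešić's characterisation [PPRSE] that a self-map of a wedge with both diagonal components equivalences is an equivalence precisely when it is reducible, combined with the fact that the off-diagonal composites then act as radical endomorphisms in the spirit of Theorem \ref{redu_radical}. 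If that fails, I would fall back on proving the equality only after checking that $g_*$ in degrees $>m$ is determined by lower-degree data.

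For the product, Lemma \ref{redu_wedge_product} transfers reducibility between $f$ and $f|_{X\vee Y}$, and the same scheme then applies, with Pavešić's product version [PPSHE] supplying the final invertibility.
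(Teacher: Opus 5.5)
Your two inequalities are correct. Producing the explicit inverse $p_X\circ\bar g\circ\iota_X$ (resp.\ $p'_X\circ\bar f\circ\iota'_X$) is a bit cleaner than the paper, which argues via homology isomorphisms in all degrees for (a) and cites Oda--Yamaguchi for (b).

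The equality part, however, has a genuine gap at exactly the step you flag, and none of your fallbacks closes it:
\begin{itemize}
\item $\NAA(X)\le H_*\text{-}\dim(X)$ gives no vanishing of homology above $m$. For example, $\NAA(\C^n)=2$ while $H_{2n}(\C^n)\neq 0$.
\item There is no principle that a self-map of a wedge whose diagonal components are equivalences is itself an equivalence. On $S^n\vee S^n$ with $n\ge 2$, the map with homology matrix $\begin{bmatrix}1&1\\ 1&1\end{bmatrix}$ has identity diagonal components but is not an equivalence.
\item The radicality hypothesis of Theorem~\ref{redu_radical} is not among your assumptions.
\end{itemize}
Knowing that the Schur complement is invertible only in degrees $\le m$ is not enough. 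The missing idea is that the hypothesis is about \emph{every} map in the monoid $\AA^m(X\vee Y)$. So you should apply it not to $g$, but to an auxiliary map in that monoid whose diagonal component realises the Schur complement.

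Here is how to do it in your own normalisation. Let $h=(\bar g_{XX}\vee\bar g_{YY})\circ g\in\AA^m(X\vee Y)$, where $\bar g_{XX},\bar g_{YY}$ are homotopy inverses; its homology matrix is $\begin{bmatrix}1&a\\ b&1\end{bmatrix}$. Put $e=(\iota_X,h\circ\iota_Y)$. Its matrix $\begin{bmatrix}1&a\\ 0&1\end{bmatrix}$ is invertible in all degrees, so $e\in\Aut(X\vee Y)$ by Whitehead. Then $e^{-1}\circ h=(e^{-1}\circ h\circ\iota_X,\iota_Y)$ lies in $\AA^m(X\vee Y)$ and has matrix $\begin{bmatrix}1-ab&0\\ b&1\end{bmatrix}$. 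Applying the hypothesis to $e^{-1}\circ h$ gives $p_X\circ e^{-1}\circ h\circ\iota_X\in\AA^m(X)=\Aut(X)$. Thus $1-ab$ is invertible in \emph{every} degree, so $e^{-1}\circ h$ is an equivalence, and hence so are $h$ and $g$. This is exactly the paper's factorisation $f=(\iota_X,f_Y)\circ\big((\iota_X,f_Y)^{-1}\circ f_X,\iota_Y\big)$.

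For the product, Lemma~\ref{redu_wedge_product} is not available in general. It needs $f$ to restrict to a self-map of $X\vee Y$ lying in $\AA^m(X\vee Y)$, which only the connectivity hypotheses of Lemma~\ref{homo_monoid} provide. Instead, for $f\in\AA^m(X\times Y)$, use the dual factorisation $f=\big(p'_X,f_Y\circ(f_X,p'_Y)^{-1}\big)\circ(f_X,p'_Y)$. Apply the hypothesis to $f\circ(f_X,p'_Y)^{-1}\in\AA^m(X\times Y)$; its $YY$-component is then in $\Aut(Y)$, which finishes the argument.
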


\begin{proof}
\begin{enumerate}[(a)]
\item If $\NAA(X\vee Y)$ is infinite then we get the result trivially. Otherwise, assume that $\NAA(X\vee Y) = n < \infty$. Let $g\in \AA^n(X)$ and $h\in \AA^n(Y)$. It follows that $g\vee h\in \AA^n(X\vee Y) = \Aut(X\vee Y)$. Thus $(g\vee h)_* \colon H_i(X\vee Y)\to H_i(X\vee Y)$ is an isomorphism for all $i$. Therefore, $g_*\colon H_i(X)\to H_i(X)$ and $h_*\colon H_i(Y)\to H_i(Y)$ is an isomorphism for all $i$, i.e $g\in \Aut(X)$ and $h\in \Aut(Y)$. Hence $$\NAA(X\vee Y) = n \geq \max\{\NAA(X), \NAA(Y)\}.$$

Further, assume that each map in $\AA^m(X\vee Y)$ is $m$-reducible, where $m = \max\{\NAA(X), \NAA(Y)\}$. Let $f\in \AA^m(X\vee Y)$. By $m$-reducibility assumption, it follows that $f_{XX}\in \AA^m(X), f_{YY}\in \AA^m(Y)$. Thus $f_{XX}\in \Aut(X)$ and $f_{YY}\in \Aut(Y)$ since $m = \max \{\NAA(X), \NAA(Y)\}.$  From \cite[Lemma 2.1]{HWSW} we have $(f_X, \iota_Y), (\iota_X, f_Y)\in \Aut(X\vee Y)$, where $f_X = f\circ \iota_X, f_Y = f\circ \iota_Y$. Observe that $$f = (f_X, f_Y) = (\iota_X, f_Y)\circ \big((\iota_X, f_Y)^{-1}\circ f_X, \iota_Y\big).$$
Since $\Aut(X\vee Y)$ is a group and hence so $(\iota_X, f_Y)^{-1}\in \Aut(X\vee Y)$. Therefore, $(\iota_X, f_Y)^{-1}\circ (f_X, \iota_Y)\in \Aut(X\vee Y)\subset \AA^m(X\vee Y)$.
Moreover, observe that $$(\iota_X, f_Y)^{-1}\circ (f_X, \iota_Y) = \big((\iota_X, f_Y)^{-1}\circ f_X, (\iota_X, f_Y)^{-1}\circ \iota_Y \big)\in \AA^m(X\vee Y).$$ By the $m$-reducibility assumption, we have $p_X\circ (\iota_X, f_Y)^{-1}\circ f_X \in \AA^m(X) = \Aut(X)$. It follows that $\big((\iota_X, f_Y)^{-1}\circ f_X, \iota_Y\big)\in \Aut(X\vee Y)$. Thus, $f\in \Aut(X\vee Y)$ and hence so $\NAA(X\vee Y)\leq m = \max \{\NAA(X), \NAA(Y)\}$.

\item The first part directly follows from \cite[Proposition 46]{OYSCFC}.

Further, assume that $h\in \AA^m(X\times Y)$. By the $m$-reducibility assumption, we have $h_{XX}\in \AA^m(X) = \Aut(X)$ and $h_{YY}\in \AA^m(Y) = \Aut(Y)$. From \cite[Proposition 2.3]{PPSHE} we have $(p'_X, h_Y), (h_X, p'_Y)\in \Aut(X\times Y)$, where $h_X = p'_X\circ h$ and $h_Y = p'_Y\circ h$. Observe that $$h = (h_X, h_Y) = \Big(p'_X, h_Y\circ (h_X, p'_Y)^{-1}\Big)\circ (h_X, p'_Y).$$
As in the above process, we have $h\in \Aut(X\times Y)$ and so $\NAA(X\times Y) \leq m = \max\{\NAA(X), \NAA(Y)\}$.
\end{enumerate}
\end{proof}

The following corollary is a direct consequence of Theorem \ref{equality_self-closeness_wedge_product}.
\begin{corollary}\label{cor: redu_product_wedge}
Let $X$ and $Y$ be CW-complexes. The following results hold:
\begin{enumerate}[(a)]
\item $\NAA(X\times Y)\leq \NAA(X\vee Y)$, if each map in $\AA^k(X\times Y)$ is $k$-reducible for $k\geq 1$.

\item $\NAA(X\vee Y)\leq \NAA(X\times Y)$, if each map in $\AA^k(X\vee Y)$ is $k$-reducible for $k\geq 1$. 
\end{enumerate}
\end{corollary}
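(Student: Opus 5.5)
The plan is to combine the lower bounds of Theorem \ref{equality_self-closeness_wedge_product} with its equality clause.

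For (a), assume every map in $\AA^k(X\times Y)$ is $k$-reducible for all $k\geq 1$, and set $m=\max\{\NAA(X),\NAA(Y)\}$. If $m=0$, the conclusion is trivial up to the convention on $\AA^0$. Otherwise $m\geq 1$, so the hypothesis gives that every map in $\AA^m(X\times Y)$ is $m$-reducible. The equality part of Theorem \ref{equality_self-closeness_wedge_product}(b) then yields
\[
\NAA(X\times Y)=\max\{\NAA(X),\NAA(Y)\}.
\]
Theorem \ref{equality_self-closeness_wedge_product}(a) holds with no reducibility assumption, and it gives
\[
\NAA(X\vee Y)\geq \max\{\NAA(X),\NAA(Y)\}.
\]
Chaining these two relations gives $\NAA(X\times Y)\leq \NAA(X\vee Y)$.

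Part (b) is symmetric. Assuming $k$-reducibility in $\AA^k(X\vee Y)$ for all $k$, the equality clause of Theorem \ref{equality_self-closeness_wedge_product}(a) gives $\NAA(X\vee Y)=m$. Part (b) of that theorem gives $\NAA(X\times Y)\geq m$, and combining the two yields $\NAA(X\vee Y)\leq\NAA(X\times Y)$.

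The only point needing care is infinite values. If $m=\infty$, the hypothesis cannot be applied at level $m$, but then Theorem \ref{equality_self-closeness_wedge_product}(a) and (b) force both invariants to be $\infty$, so the inequality holds trivially. There is no real obstacle: the argument only needs the assumption at the single level $k=m$, and the stronger "for all $k$" hypothesis covers it.
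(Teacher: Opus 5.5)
Your argument is correct and is the paper's own: the paper records this corollary as a direct consequence of Theorem \ref{equality_self-closeness_wedge_product}, obtained by chaining the unconditional lower bound for one construction with the equality clause for the other, exactly as you do. The one loose spot is $m=0$, which is not literally trivial; it is handled by noting that in the paper's simply connected, reduced-homology setting $\AA^0=\AA^1$, so the hypothesis at $k=1$ covers it.
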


Combining Corollary \ref{redu_monoid_wedge_product} and Corollary \ref{cor: redu_product_wedge}, we obtain the following result.
\begin{corollary}\label{prop31}
Suppose $X$ and $Y$ are finite-dimensional CW-complexes such that $\conn(X\times Y, X\vee Y)\geq \dim(X\vee Y)$. If every self-map in $\AA^k(X\vee Y)$ is a $k$ -reducible for $k\geq 1$, then $\NAA(X\times Y) = \NAA(X\vee Y) =\max \{ \NAA(X) , \NAA(Y)\}.$
\end{corollary}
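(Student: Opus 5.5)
The plan is to chain the earlier results directly. Take the hypotheses of Corollary \ref{prop31}: $X$ and $Y$ are finite-dimensional CW-complexes with $\conn(X\times Y, X\vee Y)\geq \dim(X\vee Y)$, and every self-map in $\AA^k(X\vee Y)$ is $k$-reducible for every $k\geq 1$.

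First, transfer reducibility from the wedge to the product. Corollary \ref{redu_monoid_wedge_product} says that for each fixed $k$, $k$-reducibility of all maps in $\AA^k(X\vee Y)$ forces $k$-reducibility of all maps in $\AA^k(X\times Y)$. That corollary also needs $H_m(X)$ and $H_m(Y)$ finitely generated, where $m=\dim(X\vee Y)$. I would state this as implicit (finite-dimensional complexes of finite type), or note that it is only used to make $\phi$ land in $\AA^k(X\vee Y)$ at the top degree. Applying the corollary for every $k\geq 1$ shows that every map in $\AA^k(X\times Y)$ is $k$-reducible.

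Next, apply Corollary \ref{cor: redu_product_wedge} in both directions:
\begin{enumerate}[(a)]
\item From part (a), using reducibility on the product, we get $\NAA(X\times Y)\leq \NAA(X\vee Y)$.
\item From part (b), using reducibility on the wedge, we get $\NAA(X\vee Y)\leq \NAA(X\times Y)$.
\end{enumerate}
Together these give $\NAA(X\times Y)=\NAA(X\vee Y)$.

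Finally, set $m'=\max\{\NAA(X),\NAA(Y)\}$. Since every map in $\AA^{m'}(X\vee Y)$ is $m'$-reducible, the equality clause of Theorem \ref{equality_self-closeness_wedge_product}(a) gives $\NAA(X\vee Y)=m'$. One point needs care: if $m'=0$, reducibility is only assumed for $k\geq 1$. Because $\NAA$ is monotone in this sense ($\AA^{k}\supseteq \AA^{k+1}\supseteq\Aut$), one can instead run the theorem's argument with $k=\max\{m',1\}$, or note that simple connectivity forces $\NAA\geq 2$ for nontrivial spaces. The finiteness of $m'$ follows from finite dimensionality, since $\NAA\leq\dim$.

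The main obstacle is bookkeeping rather than anything deep. One must check that the finite generation hypothesis needed by Corollary \ref{redu_monoid_wedge_product} holds, which it does because finite-dimensional CW-complexes are assumed to be of finite type. One must also handle the edge case $m'=0$ and keep the indices consistent.
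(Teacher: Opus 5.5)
Your proposal is correct and follows the paper's own argument. The paper obtains this corollary by combining Corollary \ref{redu_monoid_wedge_product}, which transfers $k$-reducibility from $X\vee Y$ to $X\times Y$, with both parts of Corollary \ref{cor: redu_product_wedge}, whose proofs rest on the equality clause of Theorem \ref{equality_self-closeness_wedge_product}.

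The finite-generation hypothesis you flag is also silently dropped from the paper's statement. Your justification for it, that finite-dimensional complexes are assumed to be of finite type, is not a convention the paper adopts. The hypothesis is in fact unnecessary: under the connectivity assumption, $\iota_*\colon H_m(X\vee Y)\to H_m(X\times Y)$ is surjective, and it is split-injective via $\big((p'_X)_*,(p'_Y)_*\big)$, so it is an isomorphism and restriction preserves $\AA^k$ without any finiteness condition.
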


The following proposition establishes relations among the homology self-closeness numbers of wedge sums, smash products and products of spaces.
\begin{proposition}\label{wedge_smash_product}
Let $X$ and $Y$ be finite-dimensional CW-complexes. Assume that $\conn(X\times Y, X\vee Y)\geq \dim(X\vee Y)$. Then the following results hold:
\begin{enumerate}[(a)]\setlength\itemsep{1em}
\item $\NAA(X\vee Y) + 1 \leq \NAA(X\wedge Y) \leq \dim(X) + \dim(Y).$
\item $\NAA(X\times Y)\leq \NAA(X\wedge Y)$.
\end{enumerate}
\end{proposition}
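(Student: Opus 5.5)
The plan is to work throughout with the commutative diagram \eqref{diag_cellular} and the long exact sequence \eqref{diag_five} of the pair $(X\times Y, X\vee Y)$, together with the monoid homomorphisms $\phi$ and $\Phi$ of Lemma \ref{homo_monoid}.

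For the upper bound in (a), note that $X\wedge Y$ is a CW-complex of dimension at most $\dim(X)+\dim(Y)$. Hence $H_{*}\text{-}\dim(X\wedge Y)\le \dim(X)+\dim(Y)$, and the standard inequality $\NAA(Z)\le H_{*}\text{-}\dim(Z)$ recalled at the start of Section \ref{self-closeness number} gives $\NAA(X\wedge Y)\le \dim(X)+\dim(Y)$.

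For the lower bound $\NAA(X\vee Y)+1\le \NAA(X\wedge Y)$, I will exploit the connectivity hypothesis. Since $\conn(X\times Y,X\vee Y)\ge m:=\dim(X\vee Y)$, the relative groups $H_i(X\times Y,X\vee Y)\cong H_i(X\wedge Y)$ vanish for $i\le m$. So $X\wedge Y$ is at least $m$-connected, and by the lower bound $\conn+1\le \NAA$ we get $\NAA(X\wedge Y)\ge m+1$. On the other hand, $\NAA(X\vee Y)\le H_*\text{-}\dim(X\vee Y)\le m$. Combining these gives $\NAA(X\vee Y)+1\le m+1\le \NAA(X\wedge Y)$. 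The one point to check is that $\conn(X\times Y,X\vee Y)\ge m$ really forces $\widetilde H_i(X\wedge Y)=0$ for $i\le m$. This follows from relative Hurewicz, since the pair is simply connected, and from $H_i(X\times Y,X\vee Y)\cong \widetilde H_i(X\wedge Y)$ for the CW pair.

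For (b), let $n=\NAA(X\wedge Y)$ and take $f\in \AA^n(X\times Y)$; the goal is $f\in\Aut(X\times Y)$. By Lemma \ref{homo_monoid}(b), $\bar f=\Phi(f)\in \AA^n(X\wedge Y)=\Aut(X\wedge Y)$, so $\bar f_*$ is an isomorphism in all degrees. By Lemma \ref{homo_monoid}(a), $f|_{X\vee Y}\in \AA^n(X\vee Y)$. Part (a) gives $n\ge m+1>\dim(X\vee Y)$, and $H_i(X\vee Y)=0$ for $i>m$. Therefore $(f|_{X\vee Y})_*$ is an isomorphism in every degree, and by Whitehead $f|_{X\vee Y}\in\Aut(X\vee Y)$. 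Applying the five lemma to \eqref{diag_five} in every degree $i$ then shows $f_*$ is an isomorphism on all $H_i(X\times Y)$. Whitehead's theorem for simply connected CW-complexes yields $f\in\Aut(X\times Y)$, hence $\NAA(X\times Y)\le n$.

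The main obstacle is the bookkeeping in (b). Lemma \ref{homo_monoid} uses a cellular representative for which $f(X\vee Y)\subseteq X\vee Y$, so I need the diagram \eqref{diag_five} to be a genuine map of long exact sequences of the pair. I will rely on the proof of Lemma \ref{homo_monoid}, which provides exactly this.
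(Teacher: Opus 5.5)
Your argument follows the paper's proof essentially step for step. In (a) it uses the same chain $\NAA(X\vee Y)+1\le \dim(X\vee Y)+1\le \NAA(X\wedge Y)\le \dim(X\wedge Y)$. In (b) it uses Lemma \ref{homo_monoid} and the five lemma on \eqref{diag_five}; bounding via $H_i(X\vee Y)=0$ for $i>m$ instead of $\NAA(X\vee Y)\le n-1$ is a cosmetic difference.

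\textbf{The gap.} Because you follow the paper, you also inherit its gap, which sits in the step ``$\widetilde H_i(X\wedge Y)=0$ for $i\le m$, hence $\NAA(X\wedge Y)\ge m+1$''. The bound $\conn(Z)+1\le \NAA(Z)$ comes from the constant map, which lies in $\AA^{\conn(Z)}(Z)$. That map fails to be in $\Aut(Z)$ only when $Z\not\simeq *$, and $X\wedge Y$ can be contractible under the stated hypotheses.

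\textbf{Counterexample.} Take $X=M(\ZZ/2\ZZ,r)$ and $Y=M(\ZZ/3\ZZ,r)$ with $r\ge 2$.
\begin{enumerate}[(i)]
\item The cells of $X\times Y$ outside $X\vee Y$ have dimension at least $2r$. So the pair is $(2r-1)$-connected, and $2r-1\ge r+1=\dim(X\vee Y)$; the hypothesis holds.
\item By the K\"unneth formula $\widetilde H_*(X\wedge Y)=0$, since $\ZZ/2\otimes\ZZ/3=0=\operatorname{Tor}(\ZZ/2,\ZZ/3)$. As $X\wedge Y$ is simply connected, $X\wedge Y\simeq *$ and $\NAA(X\wedge Y)=0$.
\item Yet $X\vee Y$ and $X\times Y$ are $(r-1)$-connected and not contractible, so $\NAA(X\vee Y)\ge r$ and $\NAA(X\times Y)\ge r$.
\end{enumerate}
Hence both (a) and (b) fail as stated. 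Your (b) breaks at the same point, because it relies on $n\ge m+1$ from (a).

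\textbf{Repair.} Add the hypothesis $X\wedge Y\not\simeq *$, equivalently $\widetilde H_*(X\wedge Y)\neq 0$. Then $X\wedge Y$ is $m$-connected and non-contractible, the constant map gives $\NAA(X\wedge Y)\ge m+1$, and the rest of your proof goes through.

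\textbf{A smaller point, also shared with the paper.} Lemma \ref{homo_monoid} assumes $H_m(X)$ and $H_m(Y)$ are finitely generated, which the proposition does not. You can avoid this assumption:
\begin{enumerate}[(i)]
\item $\iota_*\colon H_i(X\vee Y)\to H_i(X\times Y)$ has the left inverse $\big((p'_X)_*,(p'_Y)_*\big)$.
\item It is onto for $i\le m$, because $H_i(X\times Y,X\vee Y)=0$ there.
\item So it is an isomorphism for $i\le m$, and $(f|_{X\vee Y})_*$ is an isomorphism in every degree once $f_*$ is one up to degree $m$.
\item The five lemma on \eqref{diag_five} then gives $\bar f\in\AA^n(X\wedge Y)$ with no finiteness assumption.
\end{enumerate}
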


\begin{proof}
\begin{enumerate}[(a)]
\item Under the given hypothesis, we have $\pi_i(X\times Y, X\vee Y) = 0$ for all $i\leq \dim(X\vee Y)$. Therefore, $H_i(X\wedge Y) \cong H_i(X\times Y, X\vee Y) = 0$ for all $i\leq \dim(X\vee Y)$, by Hurewicz theorem. It follows that $$\NAA(X\vee Y) + 1\leq \dim(X\vee Y) +1 \leq \NAA(X\wedge Y)\leq \dim(X\wedge Y) = \dim(X) + \dim(Y).$$

\item Assume that $\NAA(X\wedge Y) = m$ and $f\in \AA^m(X\times Y).$ By Lemma \ref{homo_monoid}(ii), we obtain $\bar{f}\in \AA^m(X\wedge Y) = \Aut(X\wedge Y)$. From part (a), we have $\NAA(X\vee Y)\leq m-1$. Applying five lemma to the commutative Diagram in \eqref{diag_five}, we see that $f|_{X\vee Y}\in \AA^{m-1}(X\vee Y) = \Aut(X\vee Y)$. Further, using the five lemma again, it follows that $f\in \Aut(X\times Y)$. This completes the argument.
\end{enumerate}
\end{proof}

The following corollary follows immediately from Proposition \ref{wedge_smash_product}.
\begin{corollary}
Suppose $X$ and $Y$ are finite-dimensional CW-complexes such that $\conn(X\times Y, X\vee Y)\geq \dim(X\vee Y)$. Then $\NAA(X\wedge Y) \geq \max \{\NAA(X), \NAA(Y)\} + 1$.
\end{corollary}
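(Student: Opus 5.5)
The corollary reads off directly from Proposition \ref{wedge_smash_product}(a) combined with Theorem \ref{equality_self-closeness_wedge_product}(a). The approach is to chain these two inequalities, both of which are available under the same hypotheses as the corollary.

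First, the hypotheses of the corollary (finite-dimensional CW-complexes with $\conn(X\times Y, X\vee Y)\geq \dim(X\vee Y)$) are exactly those of Proposition \ref{wedge_smash_product}. Part (a) of that proposition therefore gives
\[
\NAA(X\vee Y)+1\leq \NAA(X\wedge Y).
\]

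Next, apply Theorem \ref{equality_self-closeness_wedge_product}(a), which holds for arbitrary CW-complexes. It gives $\NAA(X\vee Y)\geq \max\{\NAA(X),\NAA(Y)\}$. Substituting this lower bound into the inequality above yields
\[
\NAA(X\wedge Y)\geq \max\{\NAA(X),\NAA(Y)\}+1,
\]
which is the claim.

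There is no genuine obstacle here. The only point to check is finiteness: in the finite-dimensional setting the bound $\NAA\leq\dim$ guarantees that all the quantities involved are finite, so adding $1$ and comparing them is legitimate. Alternatively, one could bypass the wedge sum altogether, since the proof of Proposition \ref{wedge_smash_product}(a) already shows $\NAA(X\wedge Y)\geq \dim(X\vee Y)+1\geq \max\{\NAA(X),\NAA(Y)\}+1$, using $\NAA(Z)\leq\dim(Z)$ for $Z=X,Y$.
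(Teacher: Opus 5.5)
Your chain of Proposition~\ref{wedge_smash_product}(a) and Theorem~\ref{equality_self-closeness_wedge_product}(a) is exactly the paper's derivation, which states the corollary as an immediate consequence of Proposition~\ref{wedge_smash_product}. However, it inherits a genuine gap, so your remark that ``there is no genuine obstacle'' is wrong.

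The step that fails is $\NAA(X\wedge Y)\geq \dim(X\vee Y)+1$. Both the proof of Proposition~\ref{wedge_smash_product}(a) and your alternative route rest on $\conn(Z)+1\leq \NAA(Z)$ for $Z=X\wedge Y$, and this holds only when $Z$ is not contractible. For $k\leq \conn(Z)$, every self-map of $Z$, the constant one included, lies in $\AA^k(Z)$, so $\AA^k(Z)=\Aut(Z)$ forces $Z\simeq *$. Conversely, if $Z\simeq *$ then $\NAA(Z)=0$.

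The hypotheses never exclude $X\wedge Y\simeq *$. Indeed, if $X\wedge Y$ is contractible, then $H_*(X\times Y,X\vee Y)=0$ and relative Hurewicz makes the pair $(X\times Y,X\vee Y)$ infinitely connected, so the hypothesis holds automatically. A concrete example is $X=M(\ZZ/{2\ZZ},2)$ and $Y=M(\ZZ/{3\ZZ},2)$:
the relative cells of $(X\times Y,X\vee Y)$ have dimension $\geq 4>3=\dim(X\vee Y)$, so the hypothesis holds;
the K\"unneth formula gives $\tilde H_*(X\wedge Y)=0$, because $\ZZ/{2\ZZ}\otimes \ZZ/{3\ZZ}=\mathrm{Tor}(\ZZ/{2\ZZ},\ZZ/{3\ZZ})=0$;
since $X\wedge Y$ is simply connected, it is therefore contractible.
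Hence $\NAA(X\wedge Y)=0<3=\max\{\NAA(X),\NAA(Y)\}+1$. Even more trivially, $X=*$ works. So the corollary, and Proposition~\ref{wedge_smash_product}(a) on which it rests, are false as stated.

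The missing ingredient is the extra hypothesis that $X\wedge Y$ is not contractible. Equivalently, $\tilde H_*(X\wedge Y)\neq 0$, which holds for instance whenever $\tilde H_*(X)\otimes \tilde H_*(Y)\neq 0$. Under this assumption your argument is complete, and your second route is the cleaner one:
$\tilde H_i(X\wedge Y)=0$ for $i\leq \dim(X\vee Y)$, together with Hurewicz, gives $\conn(X\wedge Y)\geq \dim(X\vee Y)$;
non-contractibility gives $\NAA(X\wedge Y)\geq \conn(X\wedge Y)+1$;
finally, $\NAA(X)\leq \dim(X)$ and $\NAA(Y)\leq\dim(Y)$ finish the proof, without needing Theorem~\ref{equality_self-closeness_wedge_product}(a) at all.
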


\begin{proposition}
Let $X$ and $Y$ be simply connected CW-complexes. Then 
$$\NAA\big(\Sigma(X\times Y)\big)\geq \NAA\big(\Sigma(X\wedge Y)\big).$$
\end{proposition}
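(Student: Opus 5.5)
The plan is to use the classical splitting $\Sigma(X\times Y)\simeq \Sigma X\vee \Sigma Y\vee \Sigma(X\wedge Y)$ and reduce to a statement about wedge summands. Set $Z = \Sigma X\vee \Sigma Y$ and $W = \Sigma(X\wedge Y)$, so that $\Sigma(X\times Y)\simeq Z\vee W$. Since homotopy equivalences preserve $\NAA$, it suffices to prove
\[
\NAA(Z\vee W)\geq \NAA(W).
\]
This is exactly the wedge inequality of Theorem \ref{equality_self-closeness_wedge_product}(a), applied to $Z$ and $W$:
\[
\NAA(Z\vee W)\geq \max\{\NAA(Z),\NAA(W)\}\geq \NAA(W).
\]

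I will run through the argument of Theorem \ref{equality_self-closeness_wedge_product}(a) again in this setting. Assume $n=\NAA(Z\vee W)<\infty$; otherwise there is nothing to prove. Take $h\in \AA^n(W)$. Then $\Id_Z\vee h$ induces on $H_i(Z)\oplus H_i(W)$ the diagonal map $\Id\oplus h_*$. For $i\leq n$ this is an isomorphism, so $\Id_Z\vee h\in \AA^n(Z\vee W)=\Aut(Z\vee W)$. Hence $\Id\oplus h_*$ is an isomorphism in every degree, so $h_*$ is an isomorphism in every degree.

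To conclude $h\in\Aut(W)$, I use that $W$ is a simply connected CW-complex: it is a suspension of a connected space, and in fact $X\wedge Y$ is simply connected here. Whitehead's theorem then applies, so a homology isomorphism is a homotopy equivalence. This gives $\AA^n(W)=\Aut(W)$, and therefore $\NAA(W)\leq n$.

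The only point needing care is the splitting itself. $\Sigma(X\times Y)\simeq \Sigma X\vee\Sigma Y\vee\Sigma(X\wedge Y)$ holds for pointed CW-complexes; it comes from the cofibration $X\vee Y\to X\times Y\to X\wedge Y$, which becomes split after one suspension using the co-H structure and the projections. I will simply cite it as standard. I would also remark that the inequality is usually strict, because the wedge summands $\Sigma X$ and $\Sigma Y$ may push the number up. No reducibility hypothesis is needed, because we only use the easy half of Theorem \ref{equality_self-closeness_wedge_product}.
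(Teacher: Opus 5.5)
Your proof is correct and is the paper's argument: split $\Sigma(X\times Y)\simeq \Sigma X\vee\Sigma Y\vee\Sigma(X\wedge Y)$ (the paper cites Hatcher, Proposition 4I.1) and apply the easy inequality of Theorem \ref{equality_self-closeness_wedge_product}(a). Your explicit use of Whitehead's theorem fills in a step the paper leaves implicit. Your closing aside that the inequality is ``usually strict'' is unsupported, and it fails already for $X=Y=S^2$, where $\NAA(S^3\vee S^3\vee S^5)=5=\NAA(S^5)$; it plays no role in the proof.
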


\begin{proof}
From \cite[Proposition 4I.1]{AHAT}, we obtain $\Sigma(X\times Y)\simeq \Sigma X \vee \Sigma Y\vee \Sigma(X\wedge Y)$. Therefore, using Theorem \ref{equality_self-closeness_wedge_product}, we get the desired result. 
\end{proof}

\begin{proposition}[{\cite[Theorem 10]{CHS}}]
Let $X$ be an $(n-1)$ connected space with $\dim(X) \leq 2(n-1)$. Then $\NAA(\Sigma^n X) = \NAA(X) + n$.
\end{proposition}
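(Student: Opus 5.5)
The plan is to compare the monoids $\AA^k(X)$ and $\AA^{k+n}(\Sigma^n X)$ through the suspension homomorphism $\Sigma^n\colon [X,X]\to[\Sigma^n X,\Sigma^n X]$, in the same way the proof of Proposition \ref{prop:redu_suspension} works. Three facts are needed:
\begin{enumerate}[(i)]
\item The suspension isomorphism $H_i(X)\cong H_{i+n}(\Sigma^n X)$ is natural, as in diagram \eqref{redu_suspension}. Hence $f\in\AA^k(X)$ if and only if $\Sigma^n f\in\AA^{k+n}(\Sigma^n X)$.
\item Likewise, $f\in\Aut(X)$ if and only if $\Sigma^n f\in\Aut(\Sigma^n X)$. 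One direction is clear. For the other, if $\Sigma^n f$ is an equivalence then $f_*$ is an isomorphism in all degrees, so $f$ is a homotopy equivalence by Whitehead, since $X$ is simply connected when $n\ge 2$.
\item $\Sigma^n\colon[X,X]\to[\Sigma^nX,\Sigma^nX]$ is surjective. Since $X$ is $(n-1)$-connected and $\dim X\le 2(n-1)<2(n-1)+1$, the generalized Freudenthal theorem quoted before Proposition \ref{prop:redu_suspension} makes $[X,X]\to[\Sigma X,\Sigma X]$ a bijection. Iterating, $\Sigma^jX$ is $(n+j-1)$-connected of dimension $\le 2(n-1)+j<2(n+j-1)+1$, so each further suspension is also a bijection. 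Thus $\Sigma^n$ is bijective.
\end{enumerate}

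With these, both inequalities follow.

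\textbf{Upper bound.} Set $m=\NAA(X)$ and let $h\in\AA^{m+n}(\Sigma^nX)$. Write $h=\Sigma^n f$ using (iii). By (i), $f\in\AA^m(X)=\Aut(X)$, so $h\in\Aut(\Sigma^nX)$. Hence $\NAA(\Sigma^nX)\le m+n$.

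\textbf{Lower bound.} Suppose $\NAA(\Sigma^nX)=r<m+n$. First observe that $r\ge n$: indeed $r\ge\conn(\Sigma^nX)+1$, and in any case the homology of $\Sigma^nX$ is zero below degree $n$. So $r=k+n$ for some $0\le k<m$. Take $f\in\AA^{k}(X)$. Then $\Sigma^nf\in\AA^{k+n}(\Sigma^nX)=\Aut(\Sigma^nX)$, so $f\in\Aut(X)$ by (ii). This gives $\AA^k(X)=\Aut(X)$ with $k<m$, contradicting the minimality of $m$. Hence equality holds.

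The main obstacle is the bookkeeping in (iii): checking that the Freudenthal hypotheses $\dim<2\cdot\mathrm{conn}+1$ persist under each iterated suspension, with the right connectivity index for an $(n-1)$-connected space. A secondary point is making sure (ii) uses only homology, via Whitehead's theorem for simply connected CW-complexes, so that no homotopy-group desuspension is needed. The degenerate cases $n=1$ and $m$ infinite should be handled separately. When $m$ is infinite, the same correspondence shows that $\NAA(\Sigma^nX)$ is infinite as well.
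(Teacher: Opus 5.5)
The paper gives no proof of this statement; it only quotes \cite[Theorem 10]{CHS}. Your argument is a sound self-contained proof, and it uses the same mechanism as the paper's proof of Proposition~\ref{prop:redu_suspension}: naturality of the homology suspension isomorphism, together with the generalized Freudenthal theorem for surjectivity of $\Sigma^n\colon[X,X]\to[\Sigma^nX,\Sigma^nX]$. Your bookkeeping in (iii) is right: $\Sigma^jX$ is $(n+j-1)$-connected and $\dim\Sigma^jX\le 2n-2+j<2(n+j-1)+1$ for all $j\ge 0$. Step (ii) correctly uses only Whitehead's homology criterion. The dimension hypothesis enters only the upper bound; the lower bound uses only (i) and (ii).

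One step needs correcting. The claim $r\ge n$ does not follow from ``the homology of $\Sigma^nX$ is zero below degree $n$'' alone. That fact only gives $\AA^k(\Sigma^nX)=[\Sigma^nX,\Sigma^nX]$ for $k<n$, so $r<n$ would mean the constant map is a homotopy equivalence, i.e.\ $\Sigma^nX\simeq *$. You must assume $X\not\simeq *$ to rule this out; for simply connected $X$ this is equivalent to $\tilde H_*(X)\neq 0$, hence to $\Sigma^nX\not\simeq *$. Under that assumption the constant map lies in $\AA^{2n-1}(\Sigma^nX)$ but is not an equivalence, which gives $r\ge\conn(\Sigma^nX)+1\ge 2n$.

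This is not a removable technicality. For contractible $X$ both invariants are $0$, and the formula $0=0+n$ is false, so the proposition must be read with $X$ non-contractible. In particular the case $n=1$ cannot be ``handled separately''. There connectedness and $\dim X\le 0$ force $X=*$, so that case has to be excluded. Finally, the case $m=\infty$ never arises: $X$ is simply connected and finite-dimensional, so $\NAA(X)\le\dim X$ by Whitehead's theorem.
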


%\begin{theorem}
%Let $X$ and $Y$ be two CW-complexes. \textcolor{red}{Under some conditions}, we have $$\NAA(X\wedge Y) = \NAA(X) + \NAA(Y).$$
%\end{theorem}

\begin{example}
Consider a wedge sum $\C^m\vee \H^n$. For $k \geq  0$, every self-map $f\in \AA^k(X\vee Y)$ is $k$-reducible, from Example \ref{redu_example}.
Therefore, by Theorem \ref{equality_self-closeness_wedge_product}, we get $$\NAA(\C^m\vee \H^n) = \max \{\NAA(\C^m), \NAA(\H^n)\} = 4.$$
\end{example}

\begin{example}\label{self-closeness number projective spaces}
Consider the monoid $\AA^k(\E^m\vee \E^n)$, where $\mathbb{E}= \mathbb{C} \text{ or } \mathbb{H}$. From Example \ref{redu_example}, we see that every self-map in $\AA^k(\E^m\vee \E^n)$ is $k$-reducible for all $k\geq 0$, whenever $m\neq n$. By Theorem \ref{equality_self-closeness_wedge_product}, we have  the following result: $$\NAA(\E^m\vee \E^n) =  \max\{\NAA(\E^m), \NAA(\E^n)\} = 2 \text{ or }4 \text{ whenever } \mathbb{E} = \mathbb{C} \text { or }\mathbb{H}.$$
\end{example}

\begin{example}
Consider a wedge sum $K(G,2n)\vee \Sigma \mathbb{R}P^{2n}$, where $G$ is an abelian group other than $\ZZ/{2\ZZ}$. Note that $H_{i}(\Sigma \mathbb{R}P^{2n})\cong 0$ for all $i\geq 2n+1$. This implies $\NAA(\Sigma \mathbb{R}P^{2n})\leq 2n$. Moreover, we know that $\NAA(K(G,2n)) = 2n$. By Example \ref{exam:redu_Eilenberg_Real Projective} and Theorem \ref{equality_self-closeness_wedge_product}, we obtain $$\NAA(K(G,2n)\vee \Sigma \mathbb{R}P^{2n}) = \max\big\{\NAA(K(G,2n), \NAA(\Sigma \mathbb{R}P^{2n})\big\} = 2n.$$
\end{example}

\begin{proposition}
Let $G$ and $H$ be two finite groups with no common direct factor. For $m\geq n\geq 2$, if $H_m(K(G,n))$ is finite and contains no direct factor isomorphic to $H$ then $$\NAA(K(G,n)\vee K(H,m)) = m.$$
\end{proposition}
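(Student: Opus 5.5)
The plan is to apply Theorem \ref{equality_self-closeness_wedge_product}(a) with $X = K(G,n)$ and $Y = K(H,m)$. The inequality $\NAA(X\vee Y)\geq \max\{\NAA(X),\NAA(Y)\}$ is already proved there. Equality then follows once two things are checked.

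\textbf{Step 1: $\NAA(K(A,r)) = r$.} This holds for any nontrivial abelian group $A$ and $r\geq 2$. By the Hurewicz theorem, $H_r(K(A,r))\cong \pi_r(K(A,r)) = A$ naturally. A self-map in $\AA^r(K(A,r))$ therefore induces an isomorphism on the only nonzero homotopy group, so it is a homotopy equivalence by Whitehead's theorem. This gives $\NAA\le r$. The reverse bound $\NAA\geq r$ comes from $\conn(K(A,r))+1 = r$, or directly from the zero map, which lies in $\AA^{r-1}$ but is not an equivalence. Hence $\NAA(X)=n$, $\NAA(Y)=m$, and $\max\{\NAA(X),\NAA(Y)\} = m$ since $m\geq n$.

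\textbf{Step 2: every $f\in\AA^m(X\vee Y)$ is $m$-reducible.} I work degree by degree with the matrix $M_i(f)$ from the proof of Lemma \ref{red}.
\begin{itemize}
\item For $i<m$ we have $H_i(Y)=0$. The matrix $M_i(f)$ is then just $(f_{XX})_*$, while $(f_{YY})_*$ is the identity of the zero group. So $(f_{XX})_*$ is an isomorphism in these degrees, exactly as in Lemma \ref{red}(i).
\item At $i=m$ both $H_m(X)$ and $H_m(Y)=H$ are finite. If $m=n$, then $H_m(X)=G$ and $G,H$ have no common direct factor by hypothesis. If $m>n$, I use the hypothesis that $H_m(K(G,n))$ contains no direct factor isomorphic to $H$ in place of ``no common direct factors''. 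In either case I apply \cite[Theorem 3.2]{BCMADP}, as in the proof of Lemma \ref{red}(iv), to the isomorphism $f_*\colon H_m(X)\oplus H\to H_m(X)\oplus H$. This yields that $(f_{XX})_*$ and $(f_{YY})_*$ are isomorphisms in degree $m$.
\end{itemize}
Together these give $f_{XX}\in\AA^m(X)$ and $f_{YY}\in\AA^m(Y)$. The reducibility clause of Theorem \ref{equality_self-closeness_wedge_product}(a) then gives $\NAA(X\vee Y)=m$.

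\textbf{Main obstacle.} The hard point is the degree-$m$ step when $m>n$, where the hypothesis is phrased only as ``no direct factor isomorphic to $H$''. My first attempt is to decompose $H$ into its primary cyclic summands $\ZZ/p^r\ZZ$ and treat each $p$-primary part separately. The diagonal block $(f_{YY})_*$ on $H$ is an isomorphism provided the composite through $H_m(X)$, namely $(f_{YX})_*(f_{XX})_*^{-1}(f_{XY})_*$ (or its analogue when $(f_{XX})_*$ is not yet known to be invertible), is nilpotent. I would try to get this nilpotency from Lemma \ref{prime_nilpotent} and \cite[Proposition 4.15]{PPRSE}, following the template of Proposition \ref{nilpotent_reducibility_homdis}. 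Once $(f_{YY})_*$ is invertible, the Schur-complement argument in the proof of Theorem \ref{redu_radical} shows that $(f_{XX})_*$ is invertible as well.
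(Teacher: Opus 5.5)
Your route is the paper's. You use the reducibility clause of Theorem~\ref{equality_self-closeness_wedge_product}(a), check reducibility via Lemma~\ref{red}(i) below degree $m$, and use Lemma~\ref{red}(iv), i.e.\ \cite[Theorem 3.2]{BCMADP}, in degree $m$. Step~1 and the case $m=n$ are fine.

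\textbf{The gap.} The degree-$m$ step for $m>n$ is a genuine gap, exactly the one you flag. \cite[Theorem 3.2]{BCMADP} needs $H_m(X)$ and $H$ to have no common direct factor. The hypothesis ``no direct factor isomorphic to $H$'' is strictly weaker. Take $H_m(X)=\ZZ/2$ and $H=\ZZ/2\oplus\ZZ/3$. The automorphism of $\ZZ/2\oplus(\ZZ/2\oplus\ZZ/3)$ swapping the two copies of $\ZZ/2$ has both diagonal blocks non-invertible. So no argument using only the invertibility of $M_m(f)$ together with this hypothesis can succeed.

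Your proposed repair breaks on the same example. Lemma~\ref{prime_nilpotent} assumes that no primary summand of $H$ is a summand of $H_m(X)$, which is the stronger condition again; it also assumes $H$ has distinct primes. Indeed, $(x,y)\mapsto(x,0)$ on $H$ factors through $\ZZ/2$ and is idempotent, not nilpotent. The paper's own proof also just cites Lemma~\ref{red}(iv) at this point, so it does not close the gap either.

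\textbf{The missing idea is topological.} For $m>n$, the component $f_{XY}\colon K(H,m)\to K(G,n)$ is null-homotopic. This is because $[K(H,m),K(G,n)]\cong \tilde H^n(K(H,m);G)=0$, since $K(H,m)$ is $(m-1)$-connected and $n\le m-1$. Hence $(f_{XY})_*=0$, and
\[
M_m(f)=\begin{bmatrix}(f_{XX})_* & 0\\ (f_{YX})_* & (f_{YY})_*\end{bmatrix}
\]
is block lower triangular on the finite group $H_m(X)\oplus H$. Since $f_*$ is bijective:
\begin{enumerate}[(i)]
\item restricting to $0\oplus H$ shows $(f_{YY})_*$ is injective;
\item composing with the projection onto $H_m(X)$ shows $(f_{XX})_*$ is surjective;
\item finiteness then makes both bijective.
\end{enumerate}
This is the mechanism behind Lemma~\ref{red}(iii). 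So for $m>n$ the direct-factor hypothesis on $H_m(K(G,n))$ is not needed at all. \cite[Theorem 3.2]{BCMADP} is only required when $m=n$, and there the hypothesis on $G$ and $H$ is exactly what it needs.
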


\begin{proof}
Consider $X = K(G,n)$ and $Y = K(H,m)$. We know that $\NAA(X) = \NAA(K(G,n)) = n$ and $\NAA(Y) = \NAA(K(H,m)) = m$. Let $f\in \AA^m(X\vee Y)$. Assume first that $m\neq n$. By Lemma \ref{red}(i), we have $f_{XX}\in \AA^{m-1}(X)$ and $f_{YY}\in \AA^{m-1}(Y)$. Further, using Lemma \ref{red}(iv) it follows that $f_{XX}\in \AA^m(X)$ and $f_{YY}\AA^m(Y)$.

Now consider the case $m=n$. In this situation, Lemma \ref{red}(iv) shows that every map in $\AA^m\big(K(G,n)\vee K(H,m)\big)$ is $m$-reducible. Consequently, by applying Theorem \ref{equality_self-closeness_wedge_product}, we obtain the desired result.
\end{proof}

\subsection{Special Case}
For $m=n$ in Example \ref{self-closeness number projective spaces}, the notion of $k$-reducibility does not arise. In this scenario, the problem simplifies considerably, and computation of the homology self-closeness number does not require any $k$-reducibility assumptions. Consequently, the homology self-closeness number can be determine directly, primarily through the use of the cohomology ring structure.

\begin{proposition}\label{wed}
Consider a wedge of two projective spaces of the same dimension $\E^n \vee \E^n$. Then $\NAA(\E^n\vee \E^n) = 
\begin{cases}
  2 &\text{ if } \mathbb{E} = \mathbb{C},\\
  4 &\text { if } \mathbb{E} = \mathbb{H}.
\end{cases}
$
\end{proposition}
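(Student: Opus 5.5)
Write $d=2$ if $\mathbb{E}=\mathbb{C}$ and $d=4$ if $\mathbb{E}=\mathbb{H}$, and $X=\E^n\vee\E^n$. The plan is to prove the lower bound $\NAA(X)\geq d$ and the upper bound $\AA^d(X)\subseteq\Aut(X)$ separately.

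\textbf{Lower bound.} Since $X$ is $(d-1)$-connected, the general inequality $\conn(X)+1\leq \NAA(X)$ recalled at the start of Section \ref{self-closeness number} already gives $\NAA(X)\geq d$. (Alternatively, Theorem \ref{equality_self-closeness_wedge_product}(a) gives $\NAA(X)\geq \NAA(\E^n)=d$.)

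\textbf{Upper bound.} Take $f\in\AA^d(X)$. Then $f_*$ is an isomorphism on $H_d(X)\cong\ZZ\oplus\ZZ$.

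\emph{Step 1: reduce to cohomology in degree $d$.} The cohomology ring is
$$H^*(X)\cong \ZZ[\alpha,\beta]/(\alpha^{n+1},\beta^{n+1},\alpha\beta),\qquad \deg\alpha=\deg\beta=d.$$
All groups are free of finite rank, so the universal coefficient theorem gives $f\in\AAA_k(X)$ exactly when $f\in\AA^k(X)$. Hence $f^*$ acts on $H^d(X)=\ZZ\alpha\oplus\ZZ\beta$ by an invertible matrix
\[
A=\begin{bmatrix} a & b\\ c & e\end{bmatrix}\in GL_2(\ZZ),
\]
so that $f^*\alpha=a\alpha+c\beta$ and $f^*\beta=b\alpha+e\beta$.

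\emph{Step 2: propagate to higher degrees.} For $1\leq j\leq n$, the group $H^{dj}(X)$ is free on $\alpha^j,\beta^j$. Since $f^*$ is a ring homomorphism and $\alpha\beta=0$, the binomial expansion kills all mixed terms:
$$f^*(\alpha^j)=(a\alpha+c\beta)^j=a^j\alpha^j+c^j\beta^j,\qquad f^*(\beta^j)=b^j\alpha^j+e^j\beta^j.$$
So on $H^{dj}(X)$ the map $f^*$ has matrix $A_j=\begin{bmatrix} a^j & b^j\\ c^j & e^j\end{bmatrix}$.

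\emph{Step 3: show each $A_j$ is invertible.} This is the main obstacle: showing $a^je^j-b^jc^j=\pm1$ for all $j$. The key is the relation $\alpha\beta=0$ itself. We have
$$0=f^*(\alpha\beta)=(a\alpha+c\beta)(b\alpha+e\beta)=ab\,\alpha^2+ce\,\beta^2.$$
For $n\geq2$, the classes $\alpha^2,\beta^2$ are linearly independent, so $ab=0$ and $ce=0$. Combined with $\det A=ae-bc=\pm1$, this forces one of two shapes:
\begin{itemize}
\item if $a\neq0$, then $b=0$; then $ae=\pm1$, so $e\neq 0$ and hence $c=0$, making $A$ diagonal with entries $\pm1$;
\item if $a=0$, then $bc=\mp1$, so $c\neq0$ and hence $e=0$, making $A$ antidiagonal with entries $\pm1$.
\end{itemize}
In either case every $A_j$ is a signed permutation matrix, hence invertible.

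For $n=1$ the space $X$ is $S^d\vee S^d$, whose only nonzero reduced homology is in degree $d$, so $f\in\Aut(X)$ directly.

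\emph{Step 4: conclude.} Thus $f^*$ is an isomorphism on all of $H^*(X)$. Since the groups are free of finite type, $f_*$ is an isomorphism in all degrees, and Whitehead's theorem (with $X$ simply connected) gives $f\in\Aut(X)$. Hence $\AA^d(X)=\Aut(X)$ and $\NAA(X)\leq d$, which together with the lower bound gives $\NAA(X)=d$.
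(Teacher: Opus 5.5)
Your proposal is correct and follows essentially the same route as the paper: pass to cohomology via the universal coefficient theorem, use $f^*(\alpha\beta)=0$ together with $\det=\pm1$ to force the degree-$d$ matrix to be a signed permutation matrix, conclude that $f^*$ is a ring isomorphism, and get the lower bound from connectivity. Your separate treatment of $n=1$ (where $\alpha^2=\beta^2=0$, so the relation $f^*(\alpha\beta)=0$ gives no information) and your explicit check of the higher-degree matrices $A_j$ are points the paper leaves implicit.
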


\begin{proof}
First, we consider the case $\mathbb{E} = \mathbb{C}$. The cohomology rings of the complex projective spaces are given by $$H^*(\C^n) = \ZZ[\alpha]/(\alpha^{n+1}), ~~H^*(\C^n) = \ZZ[\beta]/(\beta^{n+1}), ~\text{ where } \deg(\alpha) = \deg(\beta) = 2.$$
Let $f\in \AA^2 (\C^n\vee \C^n)$. Then $f\in \AAA_2(\C^n\vee \C^n)$ by universal coefficient theorem. For the ring homomorphism $f^*\colon H^*(\C^n\vee \C^n)\to H^*(\C^n\vee \C^n)$, we have $$f^*(\alpha) = a \alpha + b \beta, ~~ f^*(\beta) = c \alpha + d \beta, ~~\text{ where } ad-bc = \pm 1.$$ 
Moreover, using the multiplicativity of ring homomorphism, we obtain $$f^*(\alpha \beta) = ac \alpha^2 + (ad+bc) \alpha \beta + bd \beta^2.$$ However, in the cohomology ring $H^*(\C^n\vee \C^n)$, we see that $\alpha \beta = 0$. It follows that $ac = 0, ~~ bd = 0$.
So, either $ad = \pm 1, b = 0, c = 0 ~\text{ or }~ a = 0, d = 0, bc = \pm 1.$ Hence $$f^*(\alpha, \beta) = (\pm \alpha, \pm \beta) ~\text{ or } ~(\pm \beta, \pm \alpha).$$
Therefore, $f^*\colon H^*(\C^n\vee \C^n)\to H^*(\C^n\vee \C^n)$ is a ring isomorphism. Thus $f\in \Aut(\C^n\vee \C^n)$ and hence so $\NAA(\C^n\vee \C^n)\leq 2$. Further, $\C^n \vee \C^n$ is a simply connected CW-complex. This implies $\NAA(\C^n\vee \C^n)\geq 2$ and we get the desired result. 

The proof is similar for the case $\mathbb{E} = \mathbb{H}$.
\end{proof}

\begin{remark}
An element $f\in \A^2(\C^n\vee \C^n)$ implies that $f\in \AA^2(\C^n\vee \C^n)$ by Hurewicz theorem. Hence, $$\NA(\C^n \vee \C^n) = \NAA(\C^n \vee \C^n) = \NAAA(\C^n\vee \C^n) = 2.$$

Similarly for $\H^n\vee \H^n$, we have $$\NA(\H^n \vee \H^n) = \NAA(\H^n \vee \H^n) = \NAAA(\H^n\vee \H^n) = 4.$$
\end{remark}

For $m\neq n$, the (homotopy) self-closeness number for the product of projective spaces $\E^m\times \E^n$ has been proved in \cite[Example 4.5]{LSPS}. Moreover, we know that $$H^*(\E^n\times \E^n)\cong \ZZ[\alpha,\beta]/(\alpha^{n+1},\beta^{n+1}),$$ where $\deg(\alpha) = \deg(\beta) = 2 \text{ or } 4 \text{ whenever } \mathbb{E} = \mathbb{C} \text{ or } \mathbb{H}$. Therefore, using the same argument as in the proof of Proposition \ref{wed}, we obtain the following result.
\begin{proposition}
$\NAA(\E^n\times \E^n) = 
\begin{cases}
  2 &\text{ if } \mathbb{E} = \mathbb{C},\\
  4 &\text { if } \mathbb{E} = \mathbb{H}.
\end{cases}
$ 
\end{proposition}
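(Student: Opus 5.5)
The plan mirrors the argument of Proposition \ref{wed}, with the wedge replaced by the product and the vanishing of $\alpha\beta$ replaced by the truncation relations $\alpha^{n+1}=\beta^{n+1}=0$. Write $X=\E^n\times\E^n$ and $d=\deg(\alpha)=\deg(\beta)$, where $d=2$ for $\mathbb{E}=\mathbb{C}$ and $d=4$ for $\mathbb{E}=\mathbb{H}$. Since $X$ is $(d-1)$-connected, the general inequality $\conn(X)+1\leq \NAA(X)$ gives $\NAA(X)\geq d$ immediately. It therefore remains to show that every $f\in\AA^d(X)$ is a homotopy equivalence. First, $H_i(X)$ vanishes for $0<i<d$ and is free of finite rank, so by the universal coefficient theorem $f\in \AAA_d(X)$. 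Hence $f^*$ is an isomorphism on $H^d(X)\cong\ZZ\alpha\oplus\ZZ\beta$, and we can write
\[
f^*(\alpha)=a\alpha+b\beta,\qquad f^*(\beta)=c\alpha+d'\beta,\qquad ad'-bc=\pm1 .
\]

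The key step is to use multiplicativity on the relation $\alpha^{n+1}=0$:
\[
0=f^*(\alpha)^{n+1}=(a\alpha+b\beta)^{n+1}=\sum_{j}\binom{n+1}{j}a^{j}b^{n+1-j}\alpha^{j}\beta^{n+1-j}.
\]
The monomials $\alpha^j\beta^{n+1-j}$ with $1\leq j\leq n$ form part of a $\ZZ$-basis of $H^{(n+1)d}(X)$ and are nonzero, while the two end terms $\alpha^{n+1}$ and $\beta^{n+1}$ vanish. So we obtain $\binom{n+1}{j}a^jb^{n+1-j}=0$ for $1\leq j\leq n$, and for $n\geq1$ the case $j=1$ forces $ab=0$. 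The same computation with $\beta^{n+1}=0$ gives $cd'=0$. Combined with $ad'-bc=\pm1$, this leaves two possibilities: either $b=c=0$ with $a,d'=\pm1$, or $a=d'=0$ with $b,c=\pm1$. In both cases $f^*(\alpha)$ and $f^*(\beta)$ are $\pm$ the generators, possibly swapped.

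Since $H^*(X)$ is generated as a ring by $\alpha$ and $\beta$, $f^*$ maps the monomial basis $\{\alpha^i\beta^j\}$ bijectively up to sign onto itself. Thus $f^*$ is an isomorphism in every degree. Because $X$ is a simply connected CW-complex of finite type, the universal coefficient theorem (or Lemma \ref{leq} applied in all degrees) together with Whitehead's theorem gives $f\in\Aut(X)$. Hence $\NAA(X)\leq d$, and combined with the lower bound, $\NAA(X)=d$.

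The only point that needs care is the coefficient extraction. One has to note that the mixed monomials are genuinely nonzero and linearly independent in the tensor product ring $\ZZ[\alpha,\beta]/(\alpha^{n+1},\beta^{n+1})$; this is clear from the Künneth formula, since the cohomology is free. It is also worth remarking that the case $n=1$ still works, because then $(a\alpha+b\beta)^2=2ab\,\alpha\beta$, and over $\ZZ$ the vanishing of $2ab$ gives $ab=0$.
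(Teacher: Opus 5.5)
Your proof is correct, but it follows a different route from the paper. Once the paper has the matrix $A$ of $f^*$ on $H^2$ with $\det A=\pm1$, it never uses the truncation relations. Instead it takes the monomials $\alpha^i\beta^j$, $i+j=m$, as a basis of $H^{2m}(\C^n\times\C^n)$ and applies the symmetric-power identity $\det M=(\det A)^{m(m+1)/2}=\pm1$ to the matrix $M$ of $f^*$ in that degree. This argument is blind to the ring relations, but as written it is literally correct only for $m\le n$. For $n<m\le 2n$ the monomials with an exponent exceeding $n$ vanish, so $H^{2m}$ has rank $2n-m+1$ rather than $m+1$. An extra step is then needed: for instance, the substitution is invertible on the degree-$m$ part of $\ZZ[\alpha,\beta]$, so $f^*$ is surjective on the quotient $H^{2m}$, hence bijective because that group is finitely generated free.

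You instead use $\alpha^{n+1}=\beta^{n+1}=0$, just as Proposition \ref{wed} uses $\alpha\beta=0$, to force $ab=cd'=0$. This pins $f^*$ down to a signed permutation of the generators, and hence of the whole monomial basis. Your route treats all degrees uniformly and also classifies the possible $f^*$.

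Both computations can in fact be bypassed. Since $H^*(\E^n\times\E^n)$ is generated as a ring by $H^d$, surjectivity of $f^*$ on $H^d$ already gives surjectivity in every degree, and surjective endomorphisms of finitely generated free abelian groups are isomorphisms.
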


\begin{proof}
Let $f\in \AA^2(\C^n\times \C^n)$. Arguing analogously to the proof of Proposition \ref{wed}, we obtain
$f^*(\alpha) = a\alpha+b\beta,~ f^*(\beta) = c\alpha+d\beta,~ \text{ where } ad-bc = \pm 1.$ 

\noindent Suppose $A = \begin{bmatrix}
a & b \\
c & d 
\end{bmatrix}.$ 
Then $\det(A) = \pm 1$. Consider a basis $\{\alpha^i\beta^j: i+j = m\}$ of the free $\ZZ$-module $H^{2m}(\C^n\times \C^n)$. Let $M$ be an ${(m+1)\times (m+1)}$ matrix associated to the map $f^*\colon H^{2m}(\C^n\times \C^n)\to H^{2m}(\C^n\times \C^n)$ defined by $$f^*(\alpha^i\beta^j) = (a\alpha+b\beta)^i(c\alpha+d\beta)^j = \Big(\sum^{i}_{r = 0} \binom{i}{r} a^rb^{i-r}\alpha^r\beta^{i-r}\Big)\Big(\sum^j_{l=0} \binom{j}{l} c^ld^{j-l}\alpha^l\beta^{j-l}\Big),$$ where $i+j = m.$ Therefore, $\det(M) = (\det(A))^{\frac{m(m+1)}{2}} = \pm 1$. It follows that $f^*$ is an isomorphism for all $m$. Hence, $f\in \Aut(\C^n\times \C^n)$. The proof for the case $\mathbb{E} = \mathbb{H}$ is analogous. This completes the proof. 
\end{proof}
%---------------------------------------------

\sect{Atomic decomposition of wedge sums}\label{atomic decomposition}
In homotopy theory, the term \emph{atomic space} refers to a space that is "homotopically indecomposable. In other words, an atomic space is one that cannot be further decomposed. In this section, we define \emph{$n$-atomic spaces}, which are used to determine the $k$-reducibility of self-maps in a monoid of wedge sum of such spaces. Here, we adopt the notion of atomic space introduced by Adams and Kuhn in \cite{AKAtom}. First, we recall the definition of an atomic space in our setting.
\begin{definition}[{\cite[Section 6]{PPRSE}}]
A simply connected CW-complex $X$ is called $n$-atomic if, for every self-map $f\colon X\to X$, either
\begin{enumerate}[(i)]
\item $f$ induces an automorphism on $H_i(X)$ for $i\leq n$ or
\item $f$ induces a nilpotent endomorphism on $H_i(X)$ for $i\leq n$.
\end{enumerate} 
Moreover, $X$ is called atomic if $n=\infty$. 
\end{definition}

\begin{example}\label{example_atomic}
For $n\geq 2$, consider an Eilenberg–MacLane space $K(\ZZ/{p^r\ZZ},n)$, where $p$ is a prime and $r\geq 1$. Then $K(\ZZ/{p^r\ZZ},n)$ is an $n$-atomic space, because an endomorphism on the $n$-th homology $f_*\colon \ZZ/{p^r\ZZ}\to \ZZ/{p^r\ZZ}$ is either an automorphism or a nilpotent. Moreover, it is an atomic space. If $l>n$, then any self-map $g\colon K(\ZZ/{p^r\ZZ},n)\to K(\ZZ/{p^r\ZZ},n)$ induces a nilpotent endomorphism on homology up to degree $l$ whenever $g_{\#}\colon \ZZ/{p^r\ZZ}\to \ZZ/{p^r\ZZ}$ is nilpotent, by \cite[Lemma 3.4]{PPRSE}. If $g_{\#}$ is not a nilpotent, then it must be an automorphism. This implies $g_*\colon \ZZ/{p^r\ZZ}\to \ZZ/{p^r\ZZ}$ is an automorphism on $n$-th homology group by Hurewicz theorem. We know that $\NAA\big(K(\ZZ/{p^r\ZZ},n)\big) = n$. It follows that $g\in \AA^n\big(K(\ZZ/{p^r\ZZ},n)\big) = \Aut\big(K(\ZZ/{p^r\ZZ},n)\big)$. Hence, it is an $l$-atomic space and therefore an atomic space.
\end{example}

Similarly, for $n\geq 2$, the Moore space $M(\ZZ/{p^r\ZZ},n)$ is an atomic space.

\begin{example}
Let $X = M(\ZZ/{p^r}\ZZ,n)\vee M(\ZZ/{q^s\ZZ},2n)$, where $p,q$ are primes and $r,s\geq 1, n\geq 2$. Note that $H_n(X) \cong \ZZ/{p^r\ZZ}$ and $H_{2n}(X) \cong \ZZ/{q^s\ZZ}$. For a self-map $f\colon X\to X$, the induced homomorphism $f_*\colon H_i(X)\to H_i(X)$ is either an isomorphism or nilpotent, for $i=n,2n$. However, the fact that $f_*\colon H_{2n}(X)\to H_{2n}(X)$ is an isomorphism does not necessarily imply that $f_*\colon H_n(X)\to H_n(X)$ is an isomorphism. Therefore, $X$ is an $n$-atomic space but is not $2n$-atomic. Indeed $X$ is the wedge sum of two atomic spaces $M(\ZZ/{p^r\ZZ},n)$ and $M(\ZZ/{q^s\ZZ},2n)$.
\end{example}

\begin{example}
Consider a wedge of spaces $K(\ZZ/{p^r\ZZ},n)\vee K(\ZZ/{q^s\ZZ},2n)$, where $p,q$ are primes and $r,s\geq 1, n\geq 2$. Take $X = K(\ZZ/{p^r\ZZ},n)$ and $Y = K(\ZZ/{q^s\ZZ},2n)$. Observe that 
$$
H_i(X\vee Y)\cong 
\begin{cases}
   H_i(X) & \text{ if } n\leq i < 2n,\\ 
   H_i(X)\oplus H_i(Y) & \text{ for } i\geq 2n. 
\end{cases}
$$
Let $f\colon X\vee Y\to X\vee Y$ be a self-map. Fix an integer $n\leq j<2n$, and assume that $f_*\colon H_j(X\vee Y)\to H_j(X\vee Y)$ is an isomorphism. There exists a map $\phi\colon H_j(X\vee Y)\to H_j(X\vee Y)$ such that $M_j(f)\cdot M(\phi) = M(\phi)\cdot M_j(f) = M(\Id_{H_j(X\vee Y)})$. This gives $(f_{XX})_*\phi_{11} = \phi_{11}(f_{XX})_* = \Id_{H_j(X)}$. Thus, $\phi_{11}, (f_{XX})_*\in \Aut(H_j(X))$. Therefore, $f_{XX}\colon X\to X$ induces automorphism on $H_i(X)$ for all $i$, since $X$ is an atomic space. Suppose $\psi^i_{11}\colon H_i(X)\to H_i(X)$ is an inverse of the isomorphism $(f_{XX})_*\colon H_i(X)\to H_i(X)$ for $n\leq i<2n$. This implies that  the endomorphism $f_*\colon H_i(X\vee Y)\to H_i(X\vee Y)$ is an isomorphism with inverse $\psi_i\colon H_i(X\vee Y)\to H_i(X\vee Y)$ for $n\leq i< 2n$, where 
$$
M_i(f)=
\begin{bmatrix}
(f_{XX})_{*} & 0 \\
0 & 0 
\end{bmatrix}
\text{ and }
M(\psi_i)=
\begin{bmatrix}
\psi^i_{11} & 0 \\
0 & 0 
\end{bmatrix}.
$$
Moreover, if $f$ induces a nilpotent endomorphism on $H_i(X\vee Y)$ for some $n\leq i<2n$, then the induced map $f_*\colon H_n(X\vee Y)\to H_n(X\vee Y)$ is nilpotent. Hence, $(f_{XX})_*\colon H_i(X)\to H_i(X)$ is nilpotent for $i=n$ and threfore for all $i$. Consequently, $f$ induces a nilpotent endomorphism on $H_i(X\vee Y)$ for all $n\leq i<2n$. 

Note that any self-map $g\colon X\vee Y\to X\vee Y$ induces either an isomorphism or a nilpotent endomorphism on $H_n(X\vee Y)\cong \ZZ/{p^r\ZZ}$. An isomorphism $g_*\colon H_n(X\vee Y)\to H_n(X\vee Y)$ does not necessarily induce an isomorphism on $H_{2n}(X)\oplus H_{2n}(Y)$. Hence $X\vee Y$ is an $(2n-1)$-atomic space. Further, by Example \ref{example_atomic}, we see that $X$ and $Y$ are both atomic spaces. 
\end{example}

\begin{proposition}\label{atomicity_loop_suspension}
Let $X$ be a simply connected CW-complex. Then 
\begin{enumerate}[(a)]\setlength\itemsep{0.5 em}
\item $X$ is an $n$-atomic space if $\Sigma^m X$ is an $(n+m)$-atomic space for some $m\geq 1$.

Moreover, let $X$ be $r$ connected and $\dim(X)\leq 2r+1$. If $X$ is an $n$-atomic space, then $\Sigma^m X$ is an $(n+m)$-atomic space for all $m\geq 1$.
\item $X$ is an $n$-atomic space if $\Omega^m X$ is an $(n-m)$-atomic space for $n > m\geq 1$.
\end{enumerate}
\end{proposition}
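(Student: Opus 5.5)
We transport self-maps along suspension and loop functors, following the pattern of Propositions \ref{prop:redu_suspension} and \ref{prop:redu_loop}. The key tool is the suspension isomorphism, which is natural. For any self-map $f\colon X\to X$ it gives a commutative square identifying $f_*\colon H_i(X)\to H_i(X)$ with $(\Sigma^m f)_*\colon H_{i+m}(\Sigma^m X)\to H_{i+m}(\Sigma^m X)$, as in diagram \eqref{redu_suspension}. Conjugating by an isomorphism preserves both properties in the definition. Hence $f_*$ is an automorphism (respectively nilpotent) on $H_i(X)$ for all $i\le n$ if and only if $(\Sigma^m f)_*$ is an automorphism (respectively nilpotent) on $H_j(\Sigma^m X)$ for all $m<j\le n+m$. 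For $j\le m$ the reduced homology $H_j(\Sigma^m X)$ vanishes, because $X$ is connected. So both conditions hold vacuously there: the identity of the zero group is simultaneously an automorphism and nilpotent.

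For the first assertion of (a), we take $f\colon X\to X$ and apply the hypothesis to $\Sigma^m f$. This self-map is either an automorphism on $H_j$ for all $j\le n+m$ or nilpotent on $H_j$ for all $j\le n+m$. Restricting to $m<j\le n+m$ and transporting back through the square gives the corresponding dichotomy for $f$ on $H_i(X)$, $i\le n$. Hence $X$ is $n$-atomic.

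For the converse part of (a), we need every self-map $h$ of $\Sigma^m X$ to be a suspension. Since $X$ is $r$-connected with $\dim X\le 2r+1$, the generalized Freudenthal theorem (\cite[Theorem 1.21]{CSH}) makes $\Sigma\colon[X,X]\to[\Sigma X,\Sigma X]$ surjective. We then iterate: $\Sigma^{j}X$ is $(r+j)$-connected of dimension $\le 2r+1+j\le 2(r+j)+1$, so each further suspension is surjective (in fact bijective for $j\ge1$). Hence $h\simeq\Sigma^m h'$ for some $h'\colon X\to X$. We apply $n$-atomicity to $h'$ and transport through the square to $h$ in degrees $m<j\le n+m$, using vacuity in degrees $j\le m$. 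This shows $\Sigma^m X$ is $(n+m)$-atomic. Checking the connectivity and dimension bookkeeping at each iterate is routine.

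For (b), homology does not commute with $\Omega$, so we pass through homotopy groups. Given $f\colon X\to X$, the map $\Omega^m f$ induces on $\pi_i(\Omega^m X)\cong\pi_{i+m}(X)$ the same endomorphism as $f_\#$, and $\Omega^m X$ is $(n-m)$-atomic by hypothesis. To convert this into homological information we use Lemma \ref{lem:nilpotency}(i) and its underlying result \cite[Theorem 3.5]{PPRSE}, which says that nilpotency of induced maps in homology and in homotopy up to a given degree are equivalent for simply connected spaces (the paper's standing hypothesis, which we assume for $X$ and $\Omega^m X$). For the automorphism branch we use Lemma \ref{leq} together with a Hopfian/finite-type argument as in the corollary following Proposition \ref{prop:redu_loop}. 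So in either branch of the dichotomy for $\Omega^m f$ we get the corresponding statement for $f_\#$ on $\pi_i(X)$, $m<i\le n$, and we then convert it to $f_*$ on $H_i(X)$.

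The main obstacle is the low range $i\le m$, where $\Omega^m$ sees nothing. I would handle it by first noting that the paper's definition is essentially about the top range. In the automorphism case, Lemma \ref{leq} needs isomorphisms in all degrees below $n$, so some control of the low degrees is unavoidable there. If this fails in general, I would restrict (b) to $X$ being $m$-connected, so that the low degrees vanish, and state that restriction explicitly.
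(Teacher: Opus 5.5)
Your part (a) is correct and is the paper's argument: naturality of the suspension isomorphism as in diagram~\eqref{redu_suspension}, vacuity of $H_j(\Sigma^m X)$ for $j\le m$, and surjectivity of $\Sigma^m\colon[X,X]\to[\Sigma^mX,\Sigma^mX]$ from the generalized Freudenthal theorem. Your check that each further suspension stays in the stable range is bookkeeping the paper leaves implicit. For (b) you also follow the paper's route: transport through $\pi_i(\Omega^mX)\cong\pi_{i+m}(X)$, the homology/homotopy nilpotency comparison behind Lemma~\ref{lem:nilpotency}(i) in the nilpotent branch, and Lemma~\ref{leq} in the automorphism branch.

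The gap is the low range, which you leave open, and it cannot be closed: (b) is false as stated. The paper disposes of it by asserting that, since $\Omega^mX$ is simply connected, $X$ is $(m+1)$-connected. But simple connectivity of $\Omega^mX$ only gives $\pi_m(X)=\pi_0(\Omega^mX)=0$ and $\pi_{m+1}(X)=\pi_1(\Omega^mX)=0$. It says nothing about $H_i(X)$ for $2\le i\le m-1$, which $\Omega^mX$ does not see.

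For a counterexample, take $X=\mathbb{C}P^\infty=K(\ZZ,2)$, $m=3$, $n=4$. Then $\Omega^3X$ is contractible, hence trivially $1$-atomic. But the degree-$2$ self-map of $X$ acts on $H_2(X)=\ZZ$ as multiplication by $2$, which is neither an automorphism nor nilpotent, so $X$ is not $4$-atomic. If you prefer a non-contractible loop space, use $X=\mathbb{C}P^\infty\times K(\ZZ/p\ZZ,5)$, whose triple loop space is the atomic space $K(\ZZ/p\ZZ,2)$. This also shows that your remark that the definition is ``essentially about the top range'' is wrong. The dichotomy must hold uniformly for all $i\le n$, and the low degrees are exactly where it breaks.

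So do not hedge: add the hypothesis that $X$ is $(m-1)$-connected. Together with the simple connectivity of $\Omega^mX$, this makes $X$ $(m+1)$-connected. Then all $H_i(X)$ and $\pi_i(X)$ with $i\le m+1$ vanish, and your transport argument covers every degree $i\le n$.

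Keep the finite-type hypothesis you invoked as well. In the automorphism branch, Lemma~\ref{leq} alone only gives an epimorphism on $\pi_{n-m}(\Omega^mX)$, so your Hopfian step is needed for this argument, and the paper's proof silently skips it. The return step, from $f_\#$ being an isomorphism on $\pi_i(X)$ for $i\le n$ to $f_*$ being one on $H_i(X)$ for $i\le n$, holds for simply connected spaces without finiteness.
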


\begin{proof}
\begin{enumerate}[(a)]
\item Note that $\Sigma^m X$ is a $(m+1)$ connected CW-complex. Let $f\colon X\to X$ be a self-map. For $m\geq 1$, by Diagram \eqref{redu_suspension}, we see that $f_*\colon H_i(X)\to H_i(X)$ is either an isomorphism or a nilpotent, for $i\leq n$ if and only if $(\Sigma^m f)_*\colon H_{i+m}(\Sigma^m X)\to H_{i+m}(\Sigma^m X)$ is either an isomorphism or a nilpotent, for $i+m\leq n+m$. It follows that $f\colon X\to X$ induces either an automorphism or a nilpotent endomorphism on homology for $i\leq n$, since $\Sigma^m X$ is an $(n+m)$-atomic space for some $m\geq 1$.

For the converse part, observe that the map $[X,X]\xrightarrow{\Sigma^m} [\Sigma^m X, \Sigma^m X]$ is a surjection, by \cite[Theorem 1.21]{CSH}. For a self-map $g\colon \Sigma^m X\to \Sigma^m X$, we have a map $f'\colon X\to X$ such that $\Sigma^m f' = g$. Thus, the $n$-atomicity of the map $f'$ implies the $(n+m)$-atomicity of the map $g$, for all $m\geq 1$. This completes the proof.

\item Let $f\colon X\to X$ be a self-map. Consider a commutative diagram:
$$
\xymatrixrowsep{2 em}
\xymatrixcolsep{5 em}
\xymatrix{
H_{i+m}(X) \ar[d]_{f_*} & \pi_{i+m}(X) \ar[l]_{h} \ar[r]^{\cong} \ar[d]^{f_\#}  & \pi_i(\Omega^m X) \ar[d]^{(\Omega^m f)_\#} \ar[r]^{h} & H_i(\Omega^m X) \ar[d]^{(\Omega^m f)_*} \\
H_{i+m}(X) & \pi_{i+m}(X) \ar[l]^{h} \ar[r]_{\cong} & \pi_i(\Omega^m X) \ar[r]_{h} &  H_i(\Omega^m X).
}
$$
Observe that $f^r_*\circ h = h\circ f^r_{\#}$ and $(\Omega^m f)^s_*\circ h = h\circ (\Omega^m f)^s_{\#}$ for all $r,s\geq 1$. Further, we consider $\Omega^m X$ is simply connected. Thus, $X$ is $(m+1)$ connected. Therefore, if the endomorphism $(\Omega^m f)_*\colon H_i(\Omega^m X)\to H_i(\Omega^m X)$ is a nilpotent, then $(\Omega^m f)_\#\colon \pi_i(\Omega^m X)\to \pi_i(\Omega^m X)$ is a nilpotent, for $i\leq n-m$ by \cite[Theorem 3.5]{PPRSE}. Thus, $f_\#\colon \pi_{i+m}(X)\to \pi_{i+m}(X)$ is a nilpotent endomorphism for $i+m\leq n$. Hence $f_*\colon H_i(X)\to H_i(X)$ is a nilpotent endomorphism for $i\leq n$. In either case, the endomorphism $(\Omega^m f)_*\colon H_i(\Omega^m X)\to H_i(\Omega^m X)$ is an isomorphism for $i\leq n-m$. It follows that $(\Omega^m f)_\#\colon \pi_i(\Omega^m X)\to \pi_i(\Omega^m X)$ is an isomorphism for $i\leq n-m$ and so $f_\#\colon \pi_{i+m}(X)\to \pi_{i+m}(X)$ is an isomorphism for $i+m\leq n$. Finally, we obtain $f_*\colon H_i(X)\to H_i(X)$ is an isomorphism for $i\leq n$. This completes the proof.
\end{enumerate}
\end{proof}

\begin{remark}
Consider the sequence of iterated suspension maps:
$$[X,X]\xrightarrow{\Sigma} [\Sigma X, \Sigma X]\xrightarrow {\Sigma} [\Sigma^2 X, \Sigma^2 X]\xrightarrow{\Sigma} [\Sigma^3 X, \Sigma^3 X]\xrightarrow{\Sigma}\cdots.$$
For a finite dimensional CW-complex $X$, these homotopy groups stabilize after some finite stage. Suppose $\dim(X) = l$ and $\conn(X) = r$. This implies $\dim(\Sigma^m X) = l+m$ and $\conn(\Sigma^m X) = r+m$. If $l+m\leq 2(r+m)$, i.e, $l\leq 2r+m$, then all the homotopy groups stabilize:
$$[\Sigma^m X, \Sigma^m X]\xrightarrow{\Sigma} [\Sigma^{m+1} X, \Sigma^{m+1} X]\xrightarrow{\Sigma} [\Sigma^{m+2} X, \Sigma^{m+2} X]\xrightarrow{\Sigma} \cdots.$$ 
Therefore, given any finite dimensional CW-complex $X$, all the homotopy groups are eventually isomorphic, even without any connectivity assumption on $X$ itself. It follows that for a sufficiently large $m$, if $\Sigma^m X$ is an $n$-atomic space, then $\Sigma^{m+t} X$ is an $(n+t)$-atomic space for all $t\geq 1$.
\end{remark}

Recall that $X$ is said to be dominated by $Y$ (or $X$ is a homotopy retract of $Y$) if there are maps $f\colon X\to Y$ and $g\colon Y\to X$ such that $g\circ f\simeq \Id_X$.

\begin{lemma}\label{atomic_homodis}
\begin{enumerate}[(a)]
\item If $X$ and $Y$ are homologically $n$-distant, then $X$ is not dominated by $Y$.
\item Let $X$ be an $n$-atomic space such that $\NAA(X)\leq n$. If $X$ is not dominated by $Y$, then $X$ and $Y$ are homologically $n$-distant.
\item Let $X$ be an $n$-atomic space such that $\NAA(X)\leq n$. If $X$ is dominated by $Y\vee Z$, then $X$ is dominated by $Y \text{ or } Z$.
\end{enumerate}
\end{lemma}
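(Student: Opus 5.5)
The plan is to prove the three parts in order, each by a short argument on induced homology endomorphisms, with part (c) reduced to part (b).

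For (a), argue by contradiction. Suppose $f\colon X\to Y$ and $g\colon Y\to X$ satisfy $g\circ f\simeq \Id_X$. Then $\Id_X$ is a self-map of $X$ that factors through $Y$. Since $X$ and $Y$ are homologically $n$-distant, $\Id_{H_i(X)}$ must be nilpotent for $i\leq n$. This forces $H_i(X)=0$ for all $i\leq n$, which contradicts $X$ being a nontrivial simply connected complex with homology in low degrees. If $X$ is $n$-connected the statement is vacuous, so I will tacitly assume $H_i(X)\neq 0$ for some $i\leq n$; the standing convention in the definition of $n$-distance presupposes this. So $X$ is not dominated by $Y$.

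For (b), take any self-map $h=g\circ f$ of $X$ with $f\colon X\to Y$ and $g\colon Y\to X$. Since $X$ is $n$-atomic, $h_*$ is either an automorphism on $H_i(X)$ for all $i\leq n$, or nilpotent on $H_i(X)$ for all $i\leq n$. In the first case $h\in \AA^n(X)$, and since $\NAA(X)\leq n$ we get $\AA^n(X)=\Aut(X)$. So $h$ is a homotopy equivalence with homotopy inverse $\bar h$. Then $(\bar h\circ g)\circ f\simeq \Id_X$, so $X$ is dominated by $Y$, contradicting the hypothesis. Hence the second case always holds. That is exactly homological $n$-distance, using the convention stated after the definition that the condition is checked on self-maps of $X$ factoring through $Y$. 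The symmetric formulation is asserted in the paper, so I will not re-derive it.

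For (c), suppose $f\colon X\to Y\vee Z$ and $g\colon Y\vee Z\to X$ satisfy $g\circ f\simeq\Id_X$, and assume $X$ is dominated by neither $Y$ nor $Z$. By (b), $X,Y$ are homologically $n$-distant and $X,Z$ are homologically $n$-distant. Now decompose using the wedge splitting $H_i(Y\vee Z)=H_i(Y)\oplus H_i(Z)$. On $H_i$ we have $\Id=(g f)_*=(g\iota_Y p_Y f)_*+(g\iota_Z p_Z f)_*$, since $\iota_{Y*}p_{Y*}+\iota_{Z*}p_{Z*}=\Id$ on homology of a wedge. Each summand is induced by a self-map of $X$ factoring through $Y$ (respectively $Z$), hence is nilpotent for $i\leq n$. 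The key step, and the main obstacle, is that a sum of two nilpotents need not be nilpotent in a noncommutative ring. So I will avoid that and use atomicity directly. The map $a=g\iota_Y p_Y f$ is a self-map of $X$, so $a_*$ is nilpotent on $H_i(X)$ for $i\leq n$. Then $b_*=\Id-a_*$ is an automorphism, because $\Id$ minus a nilpotent is a unit. But $b=g\iota_Z p_Z f$ factors through $Z$, and distance from $Z$ says $b_*$ is nilpotent. An automorphism can be nilpotent only on the zero group, which is a contradiction for a degree $i\leq n$ with $H_i(X)\neq0$. (Alternatively, $b\in\AA^n(X)=\Aut(X)$ gives a domination of $X$ by $Z$ directly, contradicting the assumption.) Hence $X$ is dominated by $Y$ or $Z$.
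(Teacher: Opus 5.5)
Your proposal is correct and follows the paper's proof essentially step for step. Parts (a) and (b) are the same arguments. In (c), your contradiction framing (apply (b) to both $Y$ and $Z$, then use that $\Id - a_*$ is a unit) is just the contrapositive of the paper's direct case split on whether $(g\iota_Y p_Y f)_*$ is an automorphism or nilpotent. Your parenthetical alternative, $b\in\AA^n(X)=\Aut(X)$, is exactly the paper's conclusion.

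One small correction concerns (a). If $H_i(X)=0$ for all $i\le n$, the statement is not vacuous but false: take $Y=X$ with $X$ $n$-connected. So the nondegeneracy assumption you add is genuinely needed, and the paper uses it silently.
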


\begin{proof}
\begin{enumerate}[(a)]
\item If possible, let $X$ be dominated by $Y$. There exist two maps $f\colon X\to Y$ and $g\colon Y\to X$ such that $g\circ f\simeq \Id_X$. By the given assumption $g\circ f\colon X\to X$ induces nilpotent endomorphism on $H_i(X)$ for $i\leq n$. This implies that $\Id_X$ induces nilpotent endomorphism, which is a contradiction.

\item Suppose $X$ and $Y$ are not homologically $n$-distant. Then there exist two maps say $\phi\colon X\to Y$ and $\psi\colon Y\to X$ such that $\psi\circ \phi\colon X\to X$ does not induces a nilpotent endomorphism on $H_k(X)$ for some $k\leq n$. It follows that $\psi\circ \phi$ induces an automorphism on $H_i(X)$ for $i\leq n$. Thus, $\psi\circ \phi\in \Aut(X)$. Therefore, there exists a map $h\colon X\to X$ such that $h\circ (\psi\circ \phi)\simeq \Id_X$ and so $(h\circ \psi)\circ \phi\simeq \Id_X$. This implies $X$ is dominated by $Y,$ a contradiction.

\item Let $f\colon X\to Y\vee Z$ and $g\colon Y\vee Z\to X$ be two maps such that $g\circ f\simeq \Id_X$, since $X$ is dominated by $Y\vee Z$. As in the proof of Lemma \ref{red}, it follows that $(g\circ \iota_Y)_* (p_Y\circ f)_* + (g\circ \iota_Z)_* (p_Z\circ f)_* = \Id_{H_i(X)}$ for all $i$. If $(g\circ \iota_Y)_* (p_Y\circ f)_*\colon H_i(X)\to H_i(X)$ is an automorphism for $i\leq n$, then $(g\circ \iota_Y)\circ (p_Y\circ f)\in \Aut(X)$. By the similar process in part (b), we see that $X$ is dominated by $Y$. Otherwise, $(g\circ \iota_Y)\circ (p_Y\circ f)$ induces a nilpotent endomorphism for $i\leq n$ as $X$ is an $n$-atomic sapce. From the subsection \ref{nilpotent_reducibility}, we obtain $(g\circ \iota_Y)\circ (p_Y\circ f)$ is $n$-quasi-regular. Thus $\Id_{H_i(X)} - (g\circ \iota_Y)_* (p_Y\circ f)_* = (g\circ \iota_Z)_* (p_Z\circ f)_*\colon H_i(X)\to H_i(X)$ is an automorphism for $i\leq n$. Therefore, $(g\circ \iota_Z)\circ (p_Z\circ f)\in \Aut(X)$ and $X$ is dominated by $Z$.
\end{enumerate}
\end{proof}

\begin{corollary}
Assume that $X$ is not dominated by $Y$, where the ring $\End(H_i(Y))$ is commutative for $i\leq n$. If $X$ is an $n$-atomic space and $\NAA(X)\leq n$, then a self-map $f\in \AA^n(X\vee Y)$ is $n$-reducible if and only if $f_{XX}\in \AA^n(X)$.
\end{corollary}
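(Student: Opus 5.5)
The plan is to combine Lemma \ref{atomic_homodis}(b) with Proposition \ref{prop_redu_end}. One direction is trivial: if $f\in \AA^n(X\vee Y)$ is $n$-reducible, then by Definition \ref{def:hom_redu} we have $f_{XX}\in \AA^n(X)$.

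For the converse, suppose $f_{XX}\in \AA^n(X)$. The first step is to upgrade this membership. Since $\NAA(X)\leq n$, we have $\AA^n(X)=\Aut(X)$, so $f_{XX}\in \Aut(X)$.

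The second step is to obtain the distance hypothesis. $X$ is $n$-atomic, $\NAA(X)\leq n$, and $X$ is not dominated by $Y$. Lemma \ref{atomic_homodis}(b) then says that $X$ and $Y$ are homologically $n$-distant. By the symmetry remark following the definition of homological $n$-distance, any self-map of $Y$ factoring through $X$ induces a nilpotent endomorphism of $H_i(Y)$ for $i\leq n$. This is exactly the hypothesis of Proposition \ref{prop_redu_end} with $k=n$.

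The third step is to apply Proposition \ref{prop_redu_end}. Its remaining hypothesis, that $\End(H_i(Y))$ is commutative for $i\leq n$, is assumed. Together with $f_{XX}\in \Aut(X)$, the proposition gives $f_{YY}\in \AA^n(Y)$. Hence $f$ is $n$-reducible.

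The only delicate point is the symmetry claim for homological $n$-distance, which the paper asserts without proof. If it needs justification, I would argue as follows. Given $\phi\colon Y\to X$ and $\psi\colon X\to Y$, we have $(\phi\psi)^{r+1}=\phi(\psi\phi)^r\psi$. Since $(\psi\phi)_*$ is nilpotent on $H_i(X)$ for $i\leq n$, the endomorphism $(\phi\psi)_*$ is nilpotent on $H_i(Y)$ as well. With that in hand, the argument is a direct chaining of the earlier results and involves no further obstacle.
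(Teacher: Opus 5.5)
Your proof is correct and is exactly the chaining the paper intends; the corollary is stated without proof right after Lemma \ref{atomic_homodis}, and you use part (b) of that lemma for homological $n$-distance, $\NAA(X)\leq n$ to get $\AA^n(X)=\Aut(X)$, and then Proposition \ref{prop_redu_end} with $k=n$. In your symmetry remark the labels are swapped: with $\phi\colon Y\to X$, $\psi\colon X\to Y$ and ordinary composition, the self-map of $X$ is $\phi\psi$, so the identity you want is $(\psi\phi)^{r+1}=\psi(\phi\psi)^r\phi$, which gives nilpotency of $(\psi\phi)_*$ on $H_i(Y)$ from nilpotency of $(\phi\psi)_*$ on $H_i(X)$.
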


\begin{proposition}
Let $K(\ZZ/{p^r\ZZ},n)$ not be dominated by $Y$, where $p$ is a prime, $n\geq 2$ and $r\geq 1$. Then any self-map in $\AA^n\big(K(\ZZ/{p^r\ZZ},n)\vee Y\big)$ is $n$-reducible.
\end{proposition}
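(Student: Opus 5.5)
Set $X = K(\ZZ/{p^r\ZZ},n)$ and take $f\in \AA^n(X\vee Y)$. The plan is to show first that $f_{XX}\in\Aut(X)$, and then to get $f_{YY}\in\AA^n(Y)$ from Theorem \ref{redu_radical}.

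\textbf{Step 1: $X$ and $Y$ are homologically $n$-distant.} By Example \ref{example_atomic}, $X$ is atomic, in particular $n$-atomic, and $\NAA(X)=n$. Since $X$ is not dominated by $Y$, Lemma \ref{atomic_homodis}(b) gives the distance. Concretely, every self-map of $X$ that factors through $Y$ induces a nilpotent endomorphism of $H_i(X)$ for $i\leq n$. By the symmetry remark after the definition, every self-map of $Y$ that factors through $X$ induces a nilpotent endomorphism of $H_i(Y)$ for $i\leq n$.

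\textbf{Step 2: $f_{XX}$ induces an automorphism of $H_i(X)$ for $i\le n$.} Below degree $n$ we have $H_i(X)=0$, so only $i=n$ matters, where $H_n(X)\cong\ZZ/{p^r\ZZ}$. Let $\phi$ be the inverse of $f_*$ on $H_n(X)\oplus H_n(Y)$. The $(1,1)$ entry of $M(\phi)\cdot M_n(f)=\Id$ gives
$$\phi_{11}(f_{XX})_* + \phi_{12}(f_{YX})_* = \Id_{H_n(X)}.$$
Suppose $(f_{XX})_*$ were not an automorphism. Since $\ZZ/{p^r\ZZ}$ is local, $(f_{XX})_*$ is then multiplication by a multiple of $p$. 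This makes $\phi_{11}(f_{XX})_*$ nilpotent, so $\phi_{12}(f_{YX})_*$ is a unit.

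The endomorphism $\phi_{12}(f_{YX})_*$ is the composite of $(f_{YX})_*\colon H_n(X)\to H_n(Y)$ with $\phi_{12}\colon H_n(Y)\to H_n(X)$. Because $X=K(\ZZ/{p^r\ZZ},n)$, the homomorphism $\phi_{12}$ composed with the Hurewicz identification $H_n(X)\cong\pi_n(X)$ is realized by a map $\psi\colon Y\to X$. This is the same device used in the proof of Proposition \ref{nilpotent_reducibility_homdis}, and it rests on $[Y,K(G,n)]\cong H^n(Y;G)\to\Hom(H_n(Y),G)$ being surjective by the universal coefficient theorem. Then $\psi\circ f_{YX}$ is a self-map of $X$ factoring through $Y$, so by Step 1 it induces a nilpotent endomorphism on $H_n(X)$. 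This contradicts its being a unit. Hence $(f_{XX})_*$ is an automorphism on $H_i(X)$ for all $i\le n$, so $f_{XX}\in\AA^n(X)=\Aut(X)$ because $\NAA(X)=n$.

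\textbf{Step 3: $f_{YY}\in\AA^n(Y)$.} Theorem \ref{redu_radical} needs every self-map of $Y$ factoring through $X$ to induce a radical endomorphism of $H_i(Y)$ for $i\le n$. Step 1 only gives nilpotence, and nilpotent elements need not be radical in a noncommutative ring. So I would rerun the proof of Theorem \ref{redu_radical}, checking directly that
$$\Id+\phi^i_{22}\,(f_{YX}\circ\bar f_{XX}\circ f_{XY})_*$$
is invertible. This is the step I expect to be the main obstacle.

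The element $\phi^i_{22}$ is an arbitrary endomorphism of $H_i(Y)$, not induced by a map, so nilpotence of $g_*$ for $g=f_{YX}\bar f_{XX}f_{XY}$ does not immediately transfer to the product. I would first exploit that $g_*$ factors through $H_i(X)$, which is nonzero only for $i=n$ (and $i=0$, trivially). In degree $n$, write $g_*=a\circ b$ with $b=(f_{XY})_*\colon H_n(Y)\to\ZZ/{p^r\ZZ}$ and $a=(f_{YX}\bar f_{XX})_*$. Then $\Id+\phi_{22}ab$ is invertible if and only if $\Id+b\phi_{22}a$ is invertible on $\ZZ/{p^r\ZZ}$ (Jacobson's lemma).

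The composite $b\phi_{22}a$ is an endomorphism of $\ZZ/{p^r\ZZ}$ that factors through $H_n(Y)$. Once more realize $\phi_{22}\circ a$ by precomposing with $f_{YX}\bar f_{XX}$, or realize $b\phi_{22}$ by a map $Y\to X$ via the $K(\ZZ/{p^r\ZZ},n)$ property. In either case $b\phi_{22}a$ is induced by a self-map of $X$ factoring through $Y$, hence nilpotent by Step 1. So $\Id+b\phi_{22}a$ is a unit in the local ring $\ZZ/{p^r\ZZ}$. This gives $(f_{YY})_*\in\Aut(H_i(Y))$ for $i\le n$, which completes $n$-reducibility.
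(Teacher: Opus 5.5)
Your proof is correct, and it takes a genuinely different route from the paper's.

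The paper gets homological $n$-distance from Lemma \ref{atomic_homodis}(b) and then says the rest is as in Proposition \ref{nilpotent_reducibility_homdis}. That proof runs as follows:
\begin{enumerate}
\item Realize $\phi_{12}$ by a map $\psi\colon Y\to X$.
\item Deduce $(f_{YY})_*\phi_{22}=\Id-(f_{YX}\circ\psi)_*\in\Aut(H_n(Y))$ from nilpotence on the $Y$ side.
\item Conclude that $(f_{YY})_*$ and $\phi_{22}$ are automorphisms.
\item Only then recover $(f_{XX})_*$ from the Schur complement of $\phi$.
\end{enumerate}

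You reverse the order. You first show $(f_{XX})_*$ is a unit, using that $\End(\ZZ/p^r\ZZ)$ is local together with the same realization of $\phi_{12}$. You then treat $(f_{YY})_*=\phi_{22}^{-1}\bigl(\Id+\phi_{22}(f_{YX})_*(f_{XX})_*^{-1}(f_{XY})_*\bigr)$ with Jacobson's lemma. This moves the question to $\ZZ/p^r\ZZ$, where you realize $(f_{XY})_*\phi_{22}$ by a map $Y\to X$.

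Your ordering buys robustness. In the paper's route, knowing that $(f_{YY})_*\phi_{22}$ is invertible only gives surjectivity of $(f_{YY})_*$. Upgrading that to bijectivity uses finiteness (or at least the Hopfian property) of $H_n(Y)$. This held in Proposition \ref{nilpotent_reducibility_homdis}, but it is not assumed here for arbitrary $Y$. Your argument needs nothing about $H_n(Y)$. It also avoids the radical and commutativity hypotheses of Theorem \ref{redu_radical} and Proposition \ref{prop_redu_end}, an obstacle you correctly flagged.

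Some smaller points:
\begin{itemize}
\item In Step 3 only your second option is valid. You cannot ``realize $\phi_{22}\circ a$'', since $\phi_{22}$ is not induced by a map.
\item $\bar f_{XX}$ is unnecessary, because $(f_{XX})_*^{-1}$ is just a unit of the commutative ring $\ZZ/p^r\ZZ$.
\item Step 3 can be shortened. Once $(f_{XX})_*$ is invertible, the Schur-complement identities make $\phi_{22}$ invertible. The paper's identity $(f_{YY})_*\phi_{22}=\Id-(f_{YX}\circ\psi)_*$ then gives $(f_{YY})_*$ invertible at once. Its right-hand side is invertible by the $Y$-side nilpotence you recorded in Step 1 but never used.
\end{itemize}
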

\begin{proof}
By Lemma \ref{atomic_homodis}(b), $K(\ZZ/{p^r\ZZ},n)$ and $Y$ are homologically $n$-distant. Rest of the proof is similar to that of Proposition \ref{nilpotent_reducibility_homdis}.
\end{proof}

The following theorem establishes a sufficient condition of $k$-reducibility of self-maps in a monoid of two atomic spaces.
\begin{theorem}\label{reducibility_atomic}
Let $X$ and $Y$ be $m$-atomic and $n$-atomic spaces, respectively. Fix an integer $i_0\leq \min\{m,n\}$ such that $H_{i_0}(X)$ and $H_{i_0}(Y)$ are nontrivial groups. If every endomorphism of $H_{i_0}(X)$ that factors through $H_{i_0}(Y)$ is nilpotent, then each self-map in $\AA^k(X\vee Y)$ is $k$-reducible, where $i_0\leq k\leq \min\{m,n\}$.
\end{theorem}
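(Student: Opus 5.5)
Fix $f\in \AA^k(X\vee Y)$ with $i_0\leq k\leq \min\{m,n\}$. The plan is to use atomicity to reduce the question to the single degree $i_0$. By $m$-atomicity, the self-map $f_{XX}\colon X\to X$ induces either automorphisms on $H_i(X)$ for all $i\leq m$, or nilpotent endomorphisms on $H_i(X)$ for all $i\leq m$; in the first case $f_{XX}\in\AA^k(X)$. The same dichotomy holds for $f_{YY}$ with $n$. So it suffices to rule out the nilpotent alternative for each of $f_{XX}$ and $f_{YY}$, and to do so it is enough to show that $(f_{XX})_*$ and $(f_{YY})_*$ are not nilpotent on the nontrivial groups $H_{i_0}(X)$ and $H_{i_0}(Y)$. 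A nilpotent endomorphism of a nontrivial group is never an automorphism, so in fact it suffices to show that both are automorphisms in degree $i_0$.

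Work in degree $i_0\leq k$, where $f_*$ is an isomorphism with matrix $M_{i_0}(f)$ as in the proof of Lemma \ref{red}. Let $\phi$ be its inverse, with blocks $\phi_{11},\phi_{12},\phi_{21},\phi_{22}$. From $M(\phi)\cdot M_{i_0}(f)=\Id$ the $(1,1)$ entry gives
\[
\phi_{11}(f_{XX})_* + \phi_{12}(f_{YX})_* = \Id_{H_{i_0}(X)} .
\]
The term $\phi_{12}(f_{YX})_*$ is an endomorphism of $H_{i_0}(X)$ that factors through $H_{i_0}(Y)$. By hypothesis it is nilpotent, hence quasi-regular, so $\phi_{11}(f_{XX})_* = \Id - \phi_{12}(f_{YX})_*$ is an automorphism. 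Consequently $(f_{XX})_*$ is injective on $H_{i_0}(X)$. Using the other product $M_{i_0}(f)\cdot M(\phi)=\Id$ instead, we get $(f_{XX})_*\phi_{11} + (f_{XY})_*\phi_{21}=\Id$. Here $(f_{XY})_*\phi_{21}$ also factors through $H_{i_0}(Y)$, so $(f_{XX})_*$ is surjective as well. Thus $(f_{XX})_*\in\Aut(H_{i_0}(X))$, and by atomicity $f_{XX}\in\AA^m(X)\subseteq\AA^k(X)$.

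The step I expect to be the main obstacle is the symmetric statement for $Y$, since the hypothesis only concerns endomorphisms of $H_{i_0}(X)$ that factor through $H_{i_0}(Y)$. I would reduce it to the $X$ case with the standard ring fact that $1-ab$ is a unit if and only if $1-ba$ is a unit. Apply it to $a=\psi\colon H_{i_0}(X)\to H_{i_0}(Y)$ and $b=\theta\colon H_{i_0}(Y)\to H_{i_0}(X)$. Because $\theta\psi$ is nilpotent, so is $\psi\theta$: indeed $(\psi\theta)^{N+1}=\psi(\theta\psi)^N\theta=0$. Hence every endomorphism of $H_{i_0}(Y)$ that factors through $H_{i_0}(X)$ is nilpotent too. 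The $(2,2)$ entries, $\phi_{21}(f_{XY})_*+\phi_{22}(f_{YY})_*=\Id$ and $(f_{YX})_*\phi_{12}+(f_{YY})_*\phi_{22}=\Id$, then give $(f_{YY})_*\in\Aut(H_{i_0}(Y))$ by the same argument, and $n$-atomicity yields $f_{YY}\in\AA^k(Y)$. Together these show that $f$ is $k$-reducible.
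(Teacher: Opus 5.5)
Your proof is correct and follows the paper's argument. You invert $M_{i_0}(f)$ and use that the off-diagonal composites are nilpotent, hence quasi-regular, to show that $(f_{XX})_*$ and $(f_{YY})_*$ are not nilpotent on the nontrivial groups $H_{i_0}$; atomicity then gives automorphisms in all degrees up to $k$. You are slightly more careful than the paper in two places: the paper uses only $M_{i_0}(f)\cdot M(\phi)=\Id$, since surjectivity already excludes nilpotency, and it asserts the transfer of nilpotency from $H_{i_0}(X)$ to $H_{i_0}(Y)$ only ``by symmetry'', whereas you justify it via $(\psi\theta)^{N+1}=\psi(\theta\psi)^N\theta=0$.
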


\begin{proof}
Let $f\in \AA^k(X\vee Y)$, where $k\leq \min\{m,n\}$. Then there exists an isomorphism $\phi_i\colon H_i(X)\oplus H_i(Y)\to H_i(X)\oplus H_i(Y)$ such that $M_i(f)\cdot M(\phi_i) = M(\Id_{H_i(X)\oplus H_i(Y)})$ for $i\leq k$. This implies $$(f_{XX})_*\phi^i_{11} + (f_{XY})_*\phi^i_{21} = \Id_{H_i(X)} \text{ and } (f_{YX})_*\phi^i_{12} + (f_{YY})_*\phi^i_{22} = \Id_{H_i(Y)}.$$ Let us take $i_0\leq \min\{m,n\}$ such that $H_{i_0}(X)$ and $H_{i_0}(Y)$ are nontrivial groups. Moreover, each endomorphism of $H_{i_0}(X)$ that factors through $H_{i_0}(Y)$ is nilpotent. From symmetry, every endomorphism of $H_{i_0}(Y)$ that factors through $H_{i_0}(X)$ is also nilpotent. Therefore, $(f_{XX})_*\phi^i_{11} = \Id_{H_{i_0}(X)} - (f_{XY})_*\phi^i_{21}\in \Aut\big(H_{i_0}(X)\big)$ and $(f_{YY})_*\phi^i_{22} = \Id_{H_{i_0}(Y)} - (f_{YX})_*\phi^i_{12}\in \Aut\big(H_{i_0}(Y)\big).$ This implies both the endomorphisms $(f_{XX})_*\colon H_{i_0}(X)\to H_{i_0}(X)$ and $(f_{YY})_*\colon H_{i_0}(Y)\to H_{i_0}(Y)$ are not nilpotent. Hence $(f_{XX})_*\in \Aut\big(H_i(X)\big)$ and $(f_{YY})_*\in \Aut\big(H_i(Y)\big)$ for all $i\leq k$, since $X$ is $m$-atomic and $Y$ is $n$-atomic. This completes the proof.
\end{proof}

The following result follows immediately from the proof of Theorem \ref{reducibility_atomic}.
\begin{corollary}
Let $X$ and $Y$ be $m$-atomic and $n$-atomic spaces, respectively. Fix an integer $i_0\leq \min\{m,n\}$ such that $H_{i_0}(X)$ and $H_{i_0}(Y)$ are nontrivial groups. If each map in $\AA^{i_0}(X\vee Y)$ is $i_0$-reducible, then all maps in $\AA^k(X\vee Y)$ are $k$-reducible, for all $i_0\leq k\leq \min\{m,n\}.$
\end{corollary}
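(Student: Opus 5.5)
Let $f\in \AA^k(X\vee Y)$ with $i_0\leq k\leq \min\{m,n\}$. The plan is to copy the argument of Theorem \ref{reducibility_atomic} but to replace the nilpotency hypothesis at degree $i_0$ with the $i_0$-reducibility hypothesis. The conclusion we need is that $(f_{XX})_*$ and $(f_{YY})_*$ are automorphisms of $H_i(X)$ and $H_i(Y)$ for every $i\leq k$.

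\textbf{Step 1: restrict to degree $i_0$.} Since $i_0\leq k$, we have $\AA^k(X\vee Y)\subseteq \AA^{i_0}(X\vee Y)$, so $f\in \AA^{i_0}(X\vee Y)$. By hypothesis $f$ is then $i_0$-reducible, which means $f_{XX}\in \AA^{i_0}(X)$ and $f_{YY}\in \AA^{i_0}(Y)$. In particular, $(f_{XX})_*$ is an automorphism of $H_{i_0}(X)$ and $(f_{YY})_*$ is an automorphism of $H_{i_0}(Y)$.

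\textbf{Step 2: rule out nilpotency.} Here we use that $H_{i_0}(X)\neq 0$ and $H_{i_0}(Y)\neq 0$. An automorphism of a nontrivial group cannot be nilpotent: if $\phi^r=0$ with $\phi$ invertible, then $\Id=\phi^{-r}\phi^r=0$, forcing the group to vanish. Therefore $(f_{XX})_*$ is not nilpotent on $H_{i_0}(X)$, and $(f_{YY})_*$ is not nilpotent on $H_{i_0}(Y)$.

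\textbf{Step 3: apply atomicity.} Since $i_0\leq m$ and $X$ is $m$-atomic, $f_{XX}$ either induces automorphisms on $H_i(X)$ for all $i\leq m$, or induces nilpotent endomorphisms on $H_i(X)$ for all $i\leq m$. Step 2 excludes the second alternative, so $(f_{XX})_*\in \Aut(H_i(X))$ for all $i\leq m$, and in particular for all $i\leq k$. The same argument, using that $Y$ is $n$-atomic, gives $(f_{YY})_*\in \Aut(H_i(Y))$ for all $i\leq k$. Hence $f_{XX}\in \AA^k(X)$ and $f_{YY}\in \AA^k(Y)$, so $f$ is $k$-reducible.

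This is exactly the final step of the proof of Theorem \ref{reducibility_atomic}, and there is no real obstacle. The one point to check is the direction of the atomicity dichotomy: non-nilpotency at the single degree $i_0$ must force the automorphism alternative in all degrees up to $m$ (respectively $n$). This holds because the two alternatives in the definition of $n$-atomic are each stated uniformly for all $i\leq n$, and nonvanishing of $H_{i_0}$ is exactly what lets the single degree $i_0$ decide between them.
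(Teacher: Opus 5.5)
Your proof is correct and follows the paper's route. The paper obtains this corollary directly from the proof of Theorem~\ref{reducibility_atomic}: the $i_0$-reducibility hypothesis supplies the automorphisms of $H_{i_0}(X)$ and $H_{i_0}(Y)$ that the theorem got from nilpotency, and then non-nilpotency on a nontrivial group plus the atomic dichotomy give automorphisms in all degrees $i\leq k$, exactly as in your Steps 1--3.
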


\begin{example}\label{example_Eilenberg_Maclane_redu}
Consider two atomic spaces $K(\ZZ/{p^r\ZZ},m)$ and $K(\ZZ/{q^s\ZZ},n)$, where $p,q$ are primes not necessarily distinct, $r,s\geq 1$ and $m,n\geq 2$.  We obtain the following observations:

\noindent \textbf{Case-I}: Suppose that $m = n, ~ p = q$ and $r\neq s$. For $k = n$, each self-map in $\AA^k\big(K(\ZZ/{p^r\ZZ},m)\vee K(\ZZ/{q^s\ZZ},n)\big)$ is $k$-reducible by Theorem \ref{reducibility_atomic}. Moreover, for $k<n$, there is no homology groups for both the Eilenberg-MacLane spaces. Now consider the case $k>n$. Let $f\in \AA^k\big(K(\ZZ/{p^r\ZZ},m)\vee K(\ZZ/{q^s\ZZ},n)\big)\subseteq \AA^n\big(K(\ZZ/{p^r\ZZ},m)\vee K(\ZZ/{q^s\ZZ},n)\big)$. Therefore, $f_{XX}\in \AA^n(K(\ZZ/{p^r\ZZ},m)) = \Aut(K(\ZZ/{p^r\ZZ},n))$ and $f_{YY}\in \AA^n(K(\ZZ/{q^s\ZZ},n)) = \Aut(K(\ZZ/{q^s\ZZ},m))$. Consequently, for $k\geq 1$, every self-map in $\AA^k\big(K(\ZZ/{p^r\ZZ},m)\vee K(\ZZ/{q^s\ZZ},n)\big)$ is $k$-reducible.
\vspace{0.5 em}

\noindent \textbf{Case-II}: Let $m = n$ and $p\neq q$. Thus, any self-map in $\AA^k\big(K(\ZZ/{p^r\ZZ},m)\vee K(\ZZ/{q^s\ZZ},n)\big)$ is $k$-reducible similar to Case-I.
\vspace{0.5 em}

\noindent \textbf{Case-III}: Assume $m\neq n$. Without loss of generality, take $m>n$. Each map in $\AA^k\big(K(\ZZ/{p^r\ZZ},m)\vee K(\ZZ/{q^s\ZZ},n)\big)$ is $k$-reducible for $k<m$ by Lemma \ref{red}(i). Further, $K(\ZZ/{p^r\ZZ},m)$ is not dominated by $K(\ZZ/{q^s\ZZ},n)$ and vice versa. Thus, $K(\ZZ/{p^r\ZZ},m)$ and $K(\ZZ/{q^s\ZZ},n)$ are homologically $m$-distant by Lemma \ref{atomic_homodis}(ii). Let $f\in \AA^m\big(K(\ZZ/{p^r\ZZ},m)\vee K(\ZZ/{q^s\ZZ},n)\big)\subseteq \AA^{m-1}\big(K(\ZZ/{p^r\ZZ},m)\vee K(\ZZ/{q^s\ZZ},n)\big)$. Then $f_{YY}\in \AA^{m-1}\big(K(\ZZ/{q^s\ZZ},n)\big) = \Aut\big(K(\ZZ/{q^s\ZZ},n)\big)$. Therefore, using Proposition \ref{prop_redu_end}, we obtain the reducibility. This implies every map in $\AA^k\big(K(\ZZ/{p^r\ZZ},m)\vee K(\ZZ/{q^s\ZZ},n)\big)$ is $k$-reducible for $k\geq 1$, since both the spaces are atomic. 
\end{example}

%------------------------------------------------

\sect{Application}\label{application}
In this section, we discuss the $k$-reducibility of self-maps in the monoids of atomic spaces and its related homology self-closeness numbers. By applying various results from the previous sections, we compute the homology self-closeness number of wedge sums of spaces. 

The following result establishes relations between the $n$-atomicity and the homology self-closeness number.
\begin{lemma}\label{atomic_selfcloseness}
Let $X$ be a $n$-atomic space such that $\NAA(X)\leq n$. Then
\begin{enumerate}[(a)]
\item  If $H_k(X)\neq 0$ for some integer $2\leq k\leq n$, then $\NAA(X)\leq k$.
\item If $\NAA(X) = m$, then $X$ is a $(m-1)$ connected CW-complex.
\item $\NAA(X) = \min\{i: H_i(X)\neq 0\}.$
\end{enumerate}
\end{lemma}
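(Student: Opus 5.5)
The key mechanism is that a self-map inducing an isomorphism on a single nonzero homology group $H_k(X)$ with $k\le n$ cannot induce a nilpotent endomorphism there. By $n$-atomicity it must therefore be an automorphism on all $H_i(X)$ with $i\le n$. Since $\NAA(X)\le n$, it is then a homotopy equivalence. Parts (a)--(c) follow from this in order, with (c) assembled from (a) and (b).

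For (a), I fix $k$ with $2\le k\le n$ and $H_k(X)\neq 0$, and take $f\in\AA^k(X)$. Then $f_*\colon H_k(X)\to H_k(X)$ is an isomorphism of a nonzero group. If $f_*$ were nilpotent on $H_k(X)$, some power $f_*^r$ would be both zero and an isomorphism of a nonzero group, which is impossible. By the dichotomy in the definition of $n$-atomic, $f_*$ is therefore an automorphism of $H_i(X)$ for all $i\le n$, i.e.\ $f\in\AA^n(X)$. The hypothesis $\NAA(X)\le n$ gives $\AA^n(X)=\Aut(X)$, so $f\in\Aut(X)$. Hence $\AA^k(X)=\Aut(X)$ and $\NAA(X)\le k$.

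For (b), suppose $\NAA(X)=m$. Applying (a) to every $k$ with $2\le k\le n$ and $H_k(X)\neq0$ gives $m\le k$, so $H_k(X)=0$ for $2\le k<m$; note $m\le n$ by hypothesis. Simple connectivity gives $H_1(X)=0$, and the Hurewicz theorem then yields $\pi_i(X)=0$ for $i\le m-1$, so $X$ is $(m-1)$-connected.

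For (c), I set $k_0=\min\{i: H_i(X)\neq0\}$. One inequality is the general bound $\conn(X)+1\le\NAA(X)$ recalled at the start of Section~\ref{self-closeness number}: by Hurewicz, $\conn(X)+1=k_0$. For the other inequality, if $k_0\le n$ then (a) gives $\NAA(X)\le k_0$. If $k_0>n$, then $H_i(X)=0$ for all $i\le n$, so $\AA^n(X)=[X,X]$, and $\Aut(X)=[X,X]$ would force the constant map to be an equivalence, making $X$ contractible. In that degenerate case both sides are $0$ (or $\infty$) under the paper's conventions, and I will dismiss it with a remark. The main obstacle is the boundary conventions: that $\min\emptyset$ is handled correctly for contractible $X$, and that $k\ge2$ is automatic from simple connectivity.
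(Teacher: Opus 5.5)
Your proof is correct. For (a) and (b) it is the paper's argument; the paper phrases (b) as a contrapositive, but it is the same application of (a) followed by Hurewicz.

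For (c) you are more complete than the paper. The paper says (c) ``follows from part (a) and (b)'', but both parts only give the upper bound $\NAA(X)\le k_0:=\min\{i: H_i(X)\neq 0\}$. Part (b) says $H_i(X)=0$ for $i<\NAA(X)$, and that is again $\NAA(X)\le k_0$.

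The reverse inequality needs exactly what you invoke: the general bound $\conn(X)+1\le \NAA(X)$ for non-contractible $X$, with Hurewicz identifying $\conn(X)+1$ with $k_0$. Concretely, the constant map lies in $\AA^{k_0-1}(X)=[X,X]$ but not in $\Aut(X)$.

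Your case $k_0>n$ is also a real addition: it shows that case forces $X$ to be contractible. One correction there: it is not true that ``both sides are $0$ (or $\infty$)''. For contractible $X$ one has $\NAA(X)=0$, while the minimum is taken over the empty set. The cleaner fix is to exclude contractible $X$ from (c) explicitly rather than appeal to conventions.
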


\begin{proof}
\begin{enumerate}[(a)]
\item Suppose $H_k(X)\neq 0$ for some $2\leq k\leq n$ and $f\in \AA^k(X)$. From the definition of $n$-atomic space, we have $f_*\colon H_i(X)\to H_i(X)$ is an isomorphism for all $i\leq n$. Hence $f\in Aut(X)$ and we get the desired result.
\item If $X$ is not $(m-1)$ connected, then there exists an integer $k$  such that $H_k(X)\neq 0$, where $2\leq k \leq m-1$. Thus, by part (a), we have $\NAA(X)\leq k\leq m-1$, which contradicts the given assumption.
\item Follows from part (a) and (b).
\end{enumerate}
\end{proof}

The following corollary follows directly from Lemma \ref{atomic_selfcloseness}:
\begin{corollary}
Let $X$ be an atomic space. Then $$\NAA(X) = \min\{i: H_i(X)\neq 0\}.$$  
\end{corollary}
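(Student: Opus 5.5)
The corollary is stated for an atomic space $X$, so the plan is to reduce it to Lemma \ref{atomic_selfcloseness}(c). Since $X$ is atomic, it is $n$-atomic for every $n$. We must therefore produce a single $n$ with $\NAA(X)\leq n$, and then invoke part (c) with that $n$.

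\textbf{Case $\NAA(X)<\infty$.} Take $n=\NAA(X)$, or any larger integer. Then $X$ is $n$-atomic and $\NAA(X)\leq n$, so Lemma \ref{atomic_selfcloseness}(c) gives $\NAA(X)=\min\{i: H_i(X)\neq 0\}$ directly.

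\textbf{Case $\NAA(X)=\infty$.} This is the only real obstacle, and I would argue directly from atomicity that it cannot occur when $X$ has nonzero reduced homology. Let $k=\min\{i: H_i(X)\neq 0\}$; we have $k\geq 2$ because $X$ is simply connected. Take $f\in \AA^k(X)$. Then $f_*$ is an automorphism of $H_k(X)\neq 0$, so it is not nilpotent. Since $X$ is atomic, this forces $f_*$ to be an automorphism on $H_i(X)$ for every $i$. By Whitehead's theorem for simply connected CW-complexes, $f\in\Aut(X)$. Hence $\AA^k(X)=\Aut(X)$ and $\NAA(X)\leq k<\infty$, a contradiction.

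This argument in fact proves $\NAA(X)\leq k$ outright, so it can be used in place of the case split. The reverse inequality $\NAA(X)\geq k$ holds because $H_i(X)=0$ for $i<k$. Indeed, any $g$ with $g_*=0$ in degree $k$ (for instance the constant map) lies in $\AA^{k-1}(X)$ but not in $\Aut(X)$, so $\AA^{k-1}(X)\neq\Aut(X)$. This is exactly the content of Lemma \ref{atomic_selfcloseness}(b), or of the general bound $\conn(X)+1\leq\NAA(X)$.

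\textbf{Degenerate case.} If $X$ is contractible, the minimum is taken over the empty set. I would note that this case is either excluded by convention (atomic spaces are nontrivial) or handled with the convention that both sides are $0$.
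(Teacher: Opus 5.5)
Your proof is correct and is essentially the paper's argument: the paper obtains the corollary by applying Lemma \ref{atomic_selfcloseness}(c) with $n=\infty$, where the hypothesis $\NAA(X)\leq n$ is vacuous, and your direct argument is exactly the proof of parts (a) and (b) of that lemma specialized to $n=\infty$, with Whitehead's theorem made explicit. As you yourself note, the case split on whether $\NAA(X)$ is finite is therefore unnecessary.
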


For a Moore space, by definition we have $\NAA(M(G,n) = n$. Moreover, $M(G,n)\vee M(H,m) = M(G\oplus H,n)$, whenever $m=n$. In addition, Lemma \ref{red}(i) and Theorem\ref{equality_self-closeness_wedge_product} can be extended to a finite wedge of spaces $X_1,\ldots,X_l$, provided that $Hom\big(H_i(X_j), H_i(X_{j'})\big) =0$ for all $1\leq j\neq j'\leq l$. Therefore, the following result can be easily deduced.
\begin{example}[{\cite[Example 5.1]{LSPS}}]
Suppose $n_1,\ldots,n_l\geq 2$ and $G_1,\ldots,G_l$ are abelian groups. Then $\NAA\big(M(G_1,n_1)\vee \cdots \vee M(G_l,n_l\big) = \max\{n_i: i=1,\ldots,l\}.$
\end{example}

In the following theorem, we determine the $k$-reducibility of self-maps on the wedge of two atomic spaces and the associated homology self-closeness number.
\begin{theorem}\label{thm_selfcloseness_redu}
Let $X_1$ and $X_2$ be $n_1$ and $n_2$-atomic spaces, respectively, such that $\NAA(X_1) \leq n_1$ and $\NAA(X_2)\leq n_2$. If $\NAA(X_1)\neq \NAA(X_2)$, then every self-map in $\AA^m(X_1\vee X_2)$ is $m$-reducible, where $m=\max\{\NAA(X_1), \NAA(X_2)\}$. Moreover, $$\NAA(X_1\vee X_2) = \max\{\NAA(X_1), \NAA(X_2)\}.$$
\end{theorem}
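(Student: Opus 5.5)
By Lemma \ref{atomic_selfcloseness}(c), each $\NAA(X_j)$ is the lowest degree of nonvanishing homology: $\NAA(X_j)=\min\{i: H_i(X_j)\neq 0\}$ for $j=1,2$. Without loss of generality set $a=\NAA(X_1)<b=\NAA(X_2)=m$. Then $H_i(X_2)=0$ for all $i<b$, while $H_a(X_1)\neq 0$. The plan is first to establish $m$-reducibility of every $f\in\AA^m(X_1\vee X_2)$. The equality $\NAA(X_1\vee X_2)=\max\{\NAA(X_1),\NAA(X_2)\}$ then follows directly from the ``moreover'' part of Theorem \ref{equality_self-closeness_wedge_product}.

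\textbf{The $X_1$-component.} Take $f\in\AA^m(X_1\vee X_2)$. In degree $a$ we have $H_a(X_2)=0$, so the matrix $M_a(f)$ reduces to the single entry $(f_{X_1X_1})_*$, and this must be an automorphism of $H_a(X_1)\neq 0$. In particular $(f_{X_1X_1})_*$ is not nilpotent on $H_a(X_1)$. Since $a\le n_1$ and $X_1$ is $n_1$-atomic, $f_{X_1X_1}$ induces automorphisms on $H_i(X_1)$ for all $i\le n_1$. By part (a) of Lemma \ref{atomic_selfcloseness} this gives $f_{X_1X_1}\in\AA^a(X_1)=\Aut(X_1)$, and hence $f_{X_1X_1}\in\AA^m(X_1)$.

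\textbf{The $X_2$-component.} Here I want to invoke Theorem \ref{redu_radical} with $X=X_1$, $Y=X_2$, $k=m$, using that $f_{X_1X_1}\in\Aut(X_1)$. Its hypothesis requires that every self-map of $X_2$ factoring through $X_1$ induces a radical endomorphism on $H_i(X_2)$ for $i\le m$.

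For $i<b$ this is automatic, because $H_i(X_2)=0$.

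In degree $b$, consider any composite $X_2\xrightarrow{\psi}X_1\xrightarrow{\phi}X_2$. By $n_2$-atomicity, $(\phi\psi)_*$ is either an automorphism on $H_i(X_2)$ for all $i\le n_2$, or nilpotent there. The automorphism case would make $\phi\psi\in\AA^b(X_2)=\Aut(X_2)$, so $X_2$ would be dominated by $X_1$. I would rule this out by degree considerations: by Lemma \ref{atomic_homodis}(b) it suffices to show $X_2$ is not dominated by $X_1$, and then $X_1,X_2$ are homologically $n_2$-distant, so every such composite induces a nilpotent endomorphism.

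\textbf{Main obstacle.} Ruling out domination of $X_2$ by $X_1$ is the step I expect to be hardest. If $X_2$ were a retract of $X_1$, then $H_*(X_2)$ would be a direct summand of $H_*(X_1)$. Writing $e=(\psi\circ\phi)$ for the resulting idempotent self-map of $X_1$, this $e$ induces a nonzero idempotent on $H_b(X_1)$, so $e$ is not nilpotent in degree $b\le n_1$. By atomicity of $X_1$, $e$ must then be an automorphism on $H_a(X_1)$. But $e_*$ factors through $H_a(X_2)=0$, a contradiction. (Here I may need $b\le n_1$; otherwise I would argue using $n_2$ and the fact that $e$ factors through $H_a(X_2)=0$.)

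\textbf{Radical versus nilpotent.} Nilpotency gives quasi-regularity but not radicality. So instead of citing Theorem \ref{redu_radical} directly, I would rerun its proof. The nilpotent composite $\phi^b_{22}(f_{X_2X_1}\bar f_{X_1X_1}f_{X_1X_2})_*$ should itself arise from a self-map of $X_2$ factoring through $X_1$, realized via $H_b$, so it is nilpotent too. If that realization is unavailable, I would instead use the $M\cdot\phi=\Id$ equations as in Theorem \ref{reducibility_atomic}, namely $(f_{X_2X_2})_*\phi_{22}=\Id-(f_{X_2X_1})_*\phi_{12}$ in degree $b$, together with $n_2$-atomicity.

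With that in hand, $(f_{X_2X_2})_*$ is an automorphism on $H_b(X_2)$, so $f_{X_2X_2}\in\AA^b(X_2)\subseteq\AA^m(X_2)$. Both components then lie in the required monoids, which gives $m$-reducibility and completes the argument via Theorem \ref{equality_self-closeness_wedge_product}.
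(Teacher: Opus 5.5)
Your outline is the paper's: the degree-$a$ argument (where $H_a(X_2)=0$) together with atomicity gives $f_{X_1X_1}\in\Aut(X_1)$. Homological $b$-distance via Lemma~\ref{atomic_homodis}(b), plus the Schur-complement argument of Theorem~\ref{redu_radical}, handles $f_{X_2X_2}$. Theorem~\ref{equality_self-closeness_wedge_product} then gives the equality.

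\textbf{The gap: ruling out that $X_1$ dominates $X_2$.} Your idempotent argument is correct when $b\le n_1$. In that case you could equally apply Lemma~\ref{atomic_homodis}(b) to $X_1$, which is not dominated by $X_2$ because $H_a(X_2)=0$. For $b>n_1$ your proposed alternative does not work, for two reasons:
\begin{enumerate}[(i)]
\item $e_*=0$ on $H_a(X_1)$ only puts $e$ in the nilpotent alternative on $H_{\le n_1}(X_1)$, which says nothing in degree $b$.
\item $n_2$-atomicity of $X_2$ gives no information about $\phi\psi\simeq\Id_{X_2}$.
\end{enumerate}
The paper does not close this either. It obtains ``neither dominates the other'' from Lemma~\ref{atomic_homodis}(a) applied to $a$-distance. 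Since $H_{\le a}(X_2)=0$, that only excludes $X_1$ being dominated by $X_2$.

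So, as written, the argument proves the statement only in two situations:
\begin{enumerate}[(i)]
\item under the extra hypothesis $\NAA(X_2)\le n_1$;
\item when $X_1$ is an H- or co-H-space. Then $\Id-e$ lies in $\AA^a(X_1)$ but is not an equivalence, contradicting $\NAA(X_1)=a$.
\end{enumerate}
In general you still need an argument that $X_2$ is not dominated by $X_1$.

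\textbf{The radical-versus-nilpotent step.} The paper simply asserts that $\End(H_b(X_2))$ is commutative and invokes Proposition~\ref{prop_redu_end}. Nothing in the hypotheses gives this, since atomicity only constrains endomorphisms realized by self-maps. Your first idea avoids the issue, and it can be made precise as follows.

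Write $A,B,C,D$ for the blocks of $M_i(f)$. Because $f_{X_1X_1}\in\Aut(X_1)$, the map $E=(f\circ\iota_{X_1},\iota_{X_2})$ has homology matrix $\begin{bmatrix}A&0\\ C&\Id\end{bmatrix}$, invertible in every degree. So $E$ is a homotopy equivalence, and
\[
M_i(E^{-1}\circ f)=\begin{bmatrix}\Id & A^{-1}B\\ 0 & D-CA^{-1}B\end{bmatrix}.
\]
Hence the Schur complement is induced by the self-map $k=p_{X_2}\circ E^{-1}\circ f\circ\iota_{X_2}$. An invertible matrix of this shape has invertible lower-right corner, so invertibility for $i\le b$ gives $k\in\AA^b(X_2)=\Aut(X_2)$.

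Let $\bar k$ be a homotopy inverse of $k$, let $\bar f_{X_1X_1}$ be a homotopy inverse of $f_{X_1X_1}$, and set $h=f_{X_2X_1}\circ\bar f_{X_1X_1}\circ f_{X_1X_2}$. Then
\[
(f_{X_2X_2})_*=k_*\bigl(\Id+(\bar k\circ h)_*\bigr).
\]
Since $\bar k\circ h$ factors through $X_1$, nilpotency coming from distance suffices; no radical or commutativity hypothesis is needed.

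Your fallback via $(f_{X_2X_1})_*\phi_{12}$ does not work as stated. The map $\phi_{12}$ is merely an entry of the inverse matrix, not induced by a map of spaces, so homological distance says nothing about it.
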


\begin{proof}
Suppose $\NAA(X_1) = m_1$ and $\NAA(X_2) = m_2$. Without loss of generality, assume that $m_2 > m_1$. From Lemma \ref{atomic_selfcloseness}, we see that $X_1$ and $X_2$ are $(m_1-1)$ and $(m_2-1)$ connected spaces, respectively. Therefore, $X_1$ and $X_2$ are homologically $m_1$-distant and no one is dominated by other one, by Lemma \ref{atomic_homodis}(a). Let $f\in \AA^{m_2}(X_1\vee X_2)$. Then similar to the proof of Theorem \ref{reducibility_atomic}, we have $(f_{X_1X_1})_*\phi^{m_1}_{11} = \Id_{H_{m_1}(X_1)} - (f_{X_1X_2})_*\phi^{m_1}_{21} = \Id_{H_{m_1}}(X_1)$ and $\phi^{m_1}_{11}(f_{X_1X_1})_* = \Id_{H_{m_1}(X_1)} - \phi^{m_1}_{12}(f_{X_2X_1})_* = \Id_{H_{m_1}(X_1)}$. Thus, $f_{X_1X_1}\in \Aut(H_{m_1}(X_1))$. This implies $f_{X_1X_1}\in \AA^{m_1}(X_1) = \Aut(X_1)$, since $X_1$ is an $n_1$-atomic space and $m_1\leq n_1$. Further, $X_1$ and $X_2$ are homologically $m_2$-distance by using Lemma \ref{atomic_homodis}(b). Note that $\End(H_i(X_2))$ is commutative for all $i\leq m_2$. Therefore, $f\in \AA^{m_2}(X_1\vee X_2)$ is $m_2$-reducible, from Proposition \ref{prop_redu_end}

Moreover, using Theorem \ref{equality_self-closeness_wedge_product}, we get the desired result.
\end{proof}

We generalize the Theorem \ref{thm_selfcloseness_redu} for the wedges of multiple atomic spaces.
\begin{theorem}\label{thm_higher_selfcloseness_redu}
Let $X_i$ be $n_i$-atomic space such that $\NAA(X_i)\leq n_i$ for $i=1,\ldots,l$. If $\NAA(X_i)\neq \NAA(X_j)$ for $1\leq i\neq j\leq l$, then each self-map in $\AA^m(X_1\vee \cdots \vee X_l)$ is $m$-reducible, where $m = \max\{\NAA(X_i): i =1,\dots, l\}$. Moreover,
$$\NAA(X_1\vee \cdots \vee X_l) = \max\{\NAA(X_i): i =1,\dots, l\}.$$
\end{theorem}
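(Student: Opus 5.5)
We argue by induction on the number $l$ of wedge summands. The base case $l=1$ is trivial, and $l=2$ is Theorem \ref{thm_selfcloseness_redu}.

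For the inductive step, reorder so that $m_1<m_2<\cdots<m_l$, where $m_j=\NAA(X_j)$, so that $m=m_l$. Set $W=X_1\vee\cdots\vee X_{l-1}$ and write $X_1\vee\cdots\vee X_l=W\vee X_l$. By Lemma \ref{atomic_selfcloseness}, each $X_j$ is $(m_j-1)$-connected with $H_{m_j}(X_j)\neq 0$, and $m_j=\min\{i: H_i(X_j)\neq 0\}$.

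Rather than induct on the space $W$ directly (which is not atomic), we treat $f\in\AA^{m}(X_1\vee\cdots\vee X_l)$ through its block matrix $M_i(f)=\big((f_{X_jX_{j'}})_*\big)_{j,j'}$, exactly as in Lemma \ref{red}. We then proceed degree by degree, from the bottom, establishing that each diagonal block is an automorphism.

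\begin{enumerate}[(1)]
\item \textbf{Degree $i=m_1$.} Only $X_1$ has nonzero homology among the lowest degrees, since $m_1<m_j$ forces $H_{m_1}(X_j)=0$ for $j\geq 2$. Hence $M_{m_1}(f)$ reduces to the single block $(f_{X_1X_1})_*$, which is therefore an isomorphism. By $n_1$-atomicity and $m_1\le n_1$, the map $f_{X_1X_1}$ is an isomorphism on $H_i(X_1)$ for all $i\le n_1$, so $f_{X_1X_1}\in\AA^{m_1}(X_1)=\Aut(X_1)$.

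\item \textbf{Distance between summands.} By Lemma \ref{atomic_homodis}(a),(b), no $X_j$ is dominated by $X_{j'}$ for $j\ne j'$. Indeed, a retraction $X_j\to X_{j'}\to X_j$ is impossible: if $m_{j'}>m_j$, then $H_{m_j}(X_{j'})=0$ while $H_{m_j}(X_j)\ne 0$; if $m_{j'}<m_j$, we use the symmetric argument through the domination criterion. It follows that every self-map of $X_j$ factoring through $X_{j'}$ induces nilpotent endomorphisms on $H_i(X_j)$ for $i\le n_j$.

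\item \textbf{Inductive elimination.} Assuming $(f_{X_jX_j})_*$ has already been shown invertible for $j<s$, we eliminate those rows and columns of $M_i(f)$ by the Schur-complement computation from the proof of Theorem \ref{redu_radical}. This yields that $(f_{X_sX_s})_*$ minus a sum of composites factoring through the earlier $X_j$ is an automorphism of $H_i(X_s)$.

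\item \textbf{Degree $i=m_s$.} Here the off-diagonal correction terms are compositions factoring through other summands, so they are nilpotent by step (2). In addition, $(f_{X_sX_s})_*$ itself is either an automorphism or nilpotent, by atomicity of $X_s$. If $(f_{X_sX_s})_*$ were nilpotent, the whole expression would be a sum of nilpotents, which should contradict invertibility. Hence $(f_{X_sX_s})_*$ is an automorphism on $H_{m_s}(X_s)$, and atomicity together with $m_s\le n_s$ gives $f_{X_sX_s}\in\Aut(X_s)$.
\end{enumerate}

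The main obstacle is the final point of step (4): a sum of nilpotent endomorphisms need not be nilpotent in a noncommutative ring $\End(H_{m_s}(X_s))$. To handle this, we first reduce using the already-established automorphisms, absorbing them via conjugation as in Theorem \ref{redu_radical}. We then aim to show that each correction term lies in the Jacobson radical, since by atomicity the non-unit endomorphisms induced by self-maps form a set closed under the relevant operations. Failing that, we fall back to the two-summand case: apply Theorem \ref{thm_selfcloseness_redu}-style reasoning to $X_s\vee(\bigvee_{j\ne s}X_j)$, using Lemma \ref{atomic_homodis}(c) to transfer non-domination to the wedge, and Lemma \ref{atomic_homodis}(b) to get homological distance from the whole complementary wedge. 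This makes the single composite $X_s\to\bigvee_{j\ne s}X_j\to X_s$ nilpotent directly, avoiding sums. Proposition \ref{prop_redu_end} then gives reducibility, with commutativity of $\End(H_i(X_s))$ used as in Theorem \ref{thm_selfcloseness_redu}.

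Once $m$-reducibility holds (each $f_{X_jX_j}\in\AA^m(X_j)=\Aut(X_j)$), the equality for $\NAA$ follows from the $l$-fold extension of Theorem \ref{equality_self-closeness_wedge_product}. The lower bound is part (a), iterated. For the upper bound, we factor $f$ as a product of maps $(\iota,\dots,f\circ\iota_{X_j},\dots,\iota)$, each in $\Aut$ by \cite[Lemma 2.1]{HWSW}, exactly as in that proof.
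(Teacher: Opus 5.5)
Your main line (steps (3)--(4)) has a genuine gap, and it comes before the sum-of-nilpotents problem you flag.

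For $s\ge 3$, the Schur-complement step needs the whole block of $f_*$ on $H_i(X_1)\oplus\cdots\oplus H_i(X_{s-1})$ to be invertible, with an inverse induced by a map. Invertibility of the diagonal blocks $(f_{X_jX_j})_*$ gives neither, since a block matrix with invertible diagonal blocks can be singular. So the correction term is not known to be induced by any self-map of $X_s$ through the earlier summands, and step (2) cannot be applied to it. The Jacobson-radical rescue is likewise unsupported: the endomorphisms induced by self-maps form a multiplicative monoid that need not be closed under sums.

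Although you set up an induction, you never use the inductive hypothesis, and that is the missing idea. It is also the paper's proof: induct on $W=X_1\vee\cdots\vee X_{l-1}$ through its self-closeness number, not its atomicity. Since $H_i(X_l)=0$ for $i\le m_{l-1}$, your step (1) applied to the partial wedge gives $f_{WW}\in\AA^{m_{l-1}}(W)$. This equals $\Aut(W)$ because $\NAA(W)=m_{l-1}$ by induction. That supplies a homotopy inverse $\bar f_{WW}$, and the correction becomes the single composite $f_{X_lW}\circ\bar f_{WW}\circ f_{WX_l}$ through $W$. Now only $X_l$ needs to be atomic:
\begin{enumerate}[(i)]
\item Lemma \ref{atomic_homodis}(c),(b) give homological $m_l$-distance of $X_l$ and $W$, granted non-domination (see below).
\item Proposition \ref{prop_redu_end} then gives $f_{X_lX_l}\in\AA^{m_l}(X_l)$.
\item For $j<l$, $f_{X_jX_j}=(f_{WW})_{X_jX_j}\in\Aut(X_j)$ follows from the inductive reducibility for $W$.
\end{enumerate}
Your fallback is this argument, but as written it applies Proposition \ref{prop_redu_end} to $X_s$ against $\bigvee_{j\ne s}X_j$ without knowing $f_{XX}\in\Aut$ for that complement. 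That is available only for $s=l$, and only through the induction on $W$ that you set aside.

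There are two further points, on which the paper is no more explicit than you.

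\textbf{Non-domination.} Lemma \ref{atomic_homodis}(c) needs $X_l$ not dominated by each lower $X_j$ ($m_j<m_l$). This is exactly the case your step (2) calls ``symmetric''; connectivity only rules out the opposite direction. Here is an actual argument. Suppose $r\circ s\simeq\Id_{X_l}$ with $s\colon X_l\to X_j$. Then $e=s\circ r$ is idempotent and zero on $H_{m_j}(X_j)\neq 0$. By $n_j$-atomicity, $e_*$ is nilpotent, hence zero, on $H_i(X_j)$ for $i\le n_j$. Then $\Id=(r_*s_*)^2=r_*e_*s_*=0$ on $H_i(X_l)$ for $i\le n_j$. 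This is a contradiction when $m_l\le n_j$ (e.g.\ for atomic spaces), but neither you nor the paper covers the remaining case.

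\textbf{Commutativity.} Commutativity of $\End(H_{m_l}(X_l))$ is not a hypothesis, and you can avoid it. Since $f_{WW}\in\Aut(W)$, the map $(f_W,\iota_{X_l})$ lies in $\Aut(W\vee X_l)$. Put $g=(f_W,\iota_{X_l})^{-1}\circ f$, which restricts to $\iota_W$ on $W$. Its homology matrix is block upper triangular with identity corner, so $g_{X_lX_l}\in\AA^{m_l}(X_l)=\Aut(X_l)$. Hence $g$, and so $f$, lies in $\Aut(W\vee X_l)$ outright; the equality for $\NAA$ needs no distance argument at all. Moreover, $(f_{X_lX_l})_*=(g_{X_lX_l})_*\bigl(\Id+h_*\bigr)$, where $h=\bar g_{X_lX_l}\circ f_{X_lW}\circ g_{WX_l}$ is a self-map of $X_l$ factoring through $W$ and $\bar g_{X_lX_l}$ is a homotopy inverse. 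By $n_l$-atomicity, $h_*$ is nilpotent in degrees $\le n_l$ as soon as $X_l$ is not dominated by $W$. This gives reducibility without commutativity.
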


\begin{proof}
We prove the result inductively. For $l=2$, we get the result in Theorem \ref{thm_selfcloseness_redu}. Suppose $\NAA(X_i) = m_i$ for $i=1,2,3$, with $m_3 > m_2 > m_1$. Then, $\conn(X_i) = m_i-1$ for $i=1,2,3$. Therefore, none of $X_i$ is dominated by $X_j$ for $1\leq i\neq j\leq 3$ by \ref{atomic_homodis}(a). From Lemma \ref{atomic_homodis}(c), we have $X_3$ is not dominated by $X_1\vee X_2$. Take $X = X_1\vee X_2$. By Lemma \ref{atomic_homodis}(b), we see that $X$ and $X_3$ are homologically $m_3$-distant. Let $f\in \AA^{m_3}(X\vee X_3)$. As in the proof of Theorem \ref{thm_selfcloseness_redu}, we have $f_{XX}\in \AA^{m_2}(X) = \Aut(X)$. Note that $\End\big(H_i(X_3)\big)$ is commutative for each $i\leq m_3$. Thus, we get the reducibility of $f$ in $\AA^{m_3}(X\vee X_3)$ and so in $\AA^{m_3}(X_1\vee X_2\vee X_3)$. Hence, $$\NAA(X_1\vee X_2\vee X_3) = \max\{\NAA(X),\NAA(X_3)\} = \max\{\NAA(X_i): i=1,2,3\}.$$
For $l\geq 4$, the proof is similar to the case $l=3$, by assuming $X=X_1\vee \cdots \vee X_{l-1}$ and $m_l>\cdots >m_1$, where $\NAA(X_i) = m_i$. This completes the proof.
\end{proof}

\begin{corollary}\label{higher_selfcloseness}
Suppose that either the integers $r_i$ or the integers $n_i$ are mutually distinct for $i=1,\ldots,l$, then $$\NAA\big(K(\ZZ/{p^{r_1}_1\ZZ},n_1)\vee \cdots \vee K(\ZZ/{p^{r_l}_l\ZZ},n_l)\big) = \max\{n_i: i=1,\ldots,l\},$$ where the primes $p_i$ are not necessarily distinct.
\end{corollary}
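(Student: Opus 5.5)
The plan is to deduce Corollary \ref{higher_selfcloseness} from Theorem \ref{thm_higher_selfcloseness_redu}, treating the two hypotheses separately. Set $X_i = K(\ZZ/{p^{r_i}_i\ZZ},n_i)$ with $n_i\geq 2$. By Example \ref{example_atomic}, each $X_i$ is atomic, so in particular $n_i$-atomic, and $\NAA(X_i)=n_i$. Hence the hypothesis $\NAA(X_i)\leq n_i$ of Theorem \ref{thm_higher_selfcloseness_redu} holds for every $i$.

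\textbf{Case: the $n_i$ are mutually distinct.} Then $\NAA(X_i)=n_i$ are pairwise distinct, and Theorem \ref{thm_higher_selfcloseness_redu} gives
\[
\NAA(X_1\vee\cdots\vee X_l)=\max_i n_i
\]
directly.

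\textbf{Case: the $r_i$ are mutually distinct but some $n_i$ coincide.} Here Theorem \ref{thm_higher_selfcloseness_redu} does not apply as stated, so I would group the summands by connectivity.

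\begin{enumerate}[(1)]
\item For each value $n$ occurring among the $n_i$, let $W_n$ be the wedge of those $X_i$ with $n_i=n$. Within $W_n$ the $r_i$ are distinct.
\item For two summands in $W_n$ with equal primes and distinct exponents, Case-I of Example \ref{example_Eilenberg_Maclane_redu} shows that every map in $\AA^k$ of their wedge is $k$-reducible. Distinct primes are handled by Case-II of the same example, or by Lemma \ref{red}(iv).
\item Iterating with Lemma \ref{prime_nilpotent} and Theorem \ref{reducibility_atomic} (generalized to finite wedges as remarked before Example 5.1), every map in $\AA^n(W_n)$ is $n$-reducible. Theorem \ref{equality_self-closeness_wedge_product} (a) then gives $\NAA(W_n)=n$.
\item $W_n$ is $(n-1)$-connected with $H_n(W_n)=\bigoplus \ZZ/p_i^{r_i}$. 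An isomorphism on $H_n$ must preserve each summand up to nilpotent error, because the summands are pairwise non-isomorphic cyclic groups of prime power order and \cite[Theorem 3.2]{BCMADP} applies. So every map in $\AA^n(W_n)$ induces isomorphisms on each $K(\ZZ/p_i^{r_i},n)$ component. This makes $W_n$ behave like an $n$-atomic-type block with $\NAA=n$.
\item The blocks $W_n$ for distinct $n$ have distinct self-closeness numbers. Finish by rerunning the induction in the proof of Theorem \ref{thm_higher_selfcloseness_redu} on these blocks, ordered by increasing $n$.
\end{enumerate}

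\textbf{Main obstacle.} The blocks $W_n$ are not themselves atomic, so Theorem \ref{thm_higher_selfcloseness_redu} cannot be quoted verbatim. The inductive step needs three ingredients:
\begin{itemize}
\item[(i)] $W_n$ is not dominated by blocks of different connectivity. This is immediate from connectivity and $H_n(W_n)\neq 0$.
\item[(ii)] A map in $\AA^{m}$ restricts to a self-equivalence on the lower block. This follows from $\NAA(W_n)=n$ and Lemma \ref{red}(i) in degrees below the higher block's connectivity.
\item[(iii)] The reducibility for the top block, which was obtained from Proposition \ref{prop_redu_end} using commutativity of $\End(H_i)$.
\end{itemize}
Ingredient (iii) is delicate: $\End(H_i(K(\ZZ/p^r,n)))$ need not be commutative in higher degrees. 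I would replace it with Theorem \ref{redu_radical}, checking radicality via homological distance from the atomicity of each $K(\ZZ/p^r,n)$ summand. Alternatively, since $\NAA=n$ only requires control up to degree $\max n_i$, it may suffice to verify the top degree $H_{\max n_i}$, where the Hom-vanishing of Lemma \ref{red}(i) handles all cross terms from lower blocks.
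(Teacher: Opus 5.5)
Your treatment of the case where the $n_i$ are distinct is exactly the paper's: a direct application of Theorem \ref{thm_higher_selfcloseness_redu}. For distinct $r_i$ with repeated $n_i$, the paper only says that pairwise non-domination lets the argument of Theorem \ref{thm_higher_selfcloseness_redu} go through. You are right that this theorem cannot be quoted verbatim, and your block-by-connectivity induction is a sensible frame; ingredient (ii) and step (3) are fine.

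\textbf{The gap is in the inductive step.} Let $X$ be the wedge of the lower blocks and $m=\max n_i$. Ingredient (iii) asks that, for $f\in\AA^m(X\vee W_m)$, the map $(f_{W_mW_m})_*$ be an automorphism of $H_m(W_m)$. You leave this open, and neither proposed fix works as stated.
\begin{itemize}
\item \emph{Lemma \ref{red}(i) does not apply.} In $K(\ZZ/2\ZZ,2)\vee K(\ZZ/4\ZZ,4)\vee K(\ZZ/8\ZZ,4)$ one has $H_4(K(\ZZ/2\ZZ,2))\cong\Gamma(\ZZ/2\ZZ)\cong\ZZ/4\ZZ$. So $\Hom$ between $H_4$ of the lower block and $H_4(W_4)\cong\ZZ/4\ZZ\oplus\ZZ/8\ZZ$ is nonzero in both directions.
\item \emph{The route through Theorem \ref{redu_radical} is incomplete.} That theorem needs radical endomorphisms, but homological distance only gives nilpotent ones, and nilpotent does not imply radical in a noncommutative ring. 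You would need the extra fact that for $G=\bigoplus_a\ZZ/p_a^{r_a}\ZZ$ with pairwise distinct prime powers, $\End(G)/J(\End(G))\cong\prod_a\mathbb{F}_{p_a}$.
\item \emph{The noncommutativity is misdiagnosed.} In degrees $\le m$ only $H_m(W_m)$ is nonzero. The noncommutativity comes from repeated primes inside $W_m$, not from higher homology of a single Eilenberg--MacLane space.
\item \emph{Ingredient (i) is immediate only in one direction.} The direction you actually need is that $W_m$ is not dominated by the less connected $X$. This does not follow from connectivity, since $H_m(X)$ may be nonzero.
\end{itemize}

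\textbf{The missing idea} settles (i) and (iii) together: maps from a more highly connected Eilenberg--MacLane space to a less connected one are null, because $[K(A,m),K(B,n)]\cong H^n(K(A,m);B)=0$ for $n<m$. Since $H_*(X)\cong\bigoplus_j H_*(X_j)$ via the projections, this gives $(f_{XW_m})_*=0$ in every degree. Hence
\[
M_i(f)=\begin{bmatrix}(f_{XX})_* & 0\\ (f_{W_mX})_* & (f_{W_mW_m})_*\end{bmatrix}.
\]
By your (ii) and the induction hypothesis, $f_{XX}\in\Aut(X)$. So invertibility of $M_m(f)$ forces $(f_{W_mW_m})_*\in\Aut(H_m(W_m))$, that is, $f_{W_mW_m}\in\AA^m(W_m)=\Aut(W_m)$ by your step (3). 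Therefore every map in $\AA^m(X\vee W_m)$ is $m$-reducible, and Theorem \ref{equality_self-closeness_wedge_product}(a) gives $\NAA=m$.

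The same vanishing shows that every composite $W_m\to X\to W_m$ is zero on homology, which is the non-domination you wanted in (i). Without this triangularity, or the radical computation above, your argument for the repeated-$n_i$ case is incomplete.
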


\begin{proof}
We establish the result by considering the following two cases.

\noindent \textbf{Case-I:} Let $r_i\neq r_j$ for $1\leq i\neq j\leq l$. Thus, the space $K(\ZZ/{p^{r_i}\ZZ}, n_i)$ is not dominated by $K(\ZZ/{p^{r_j}\ZZ}, n_j)$, and vice versa for $i\neq j$. Hence, by an argument identical to that used in the proof of Theorem \ref{thm_higher_selfcloseness_redu}, we get the desired result.

\noindent \textbf{Case-II:} Let $n_i \neq n_j$ for $1\leq i\neq j\leq l$. It is well known that $\NAA\big(K(\ZZ/{p^{r_i}_i\ZZ},n_i)\big) = n_i$ for all $1\leq i\leq l$. Therefore, the result follows directly from Theorem \ref{thm_higher_selfcloseness_redu}. This completes the proof.
\end{proof}

\bibliographystyle{plain} 
\bibliography{references}

\end{document}